\newcommand{\be}{\begin{otherlanguage}{english}}
\newcommand{\ee}{\end{otherlanguage}}
\theoremstyle{definition}
\newtheorem{defn}[subsection]{Definition}
\newtheorem{rem}[subsection]{Remark}
\theoremstyle{plain}
\newtheorem{lemma}[subsection]{Lemma}
\newtheorem{prop}[subsection]{Proposition}
\newtheorem{thm}[subsection]{Theorem}
\newtheorem{cor}[subsection]{Corollary}
\theoremstyle{remark}
\numberwithin{equation}{subsection}
\newcommand{\beq}{\begin{equation}}
\newcommand{\eeq}{\end{equation}}
\newcommand{\ra}{\rightarrow}
\newcommand{\lra}{\longrightarrow}
\newcommand{\xra}{\xrightarrow}
\newcommand{\hra}{\hookrightarrow}
\newcommand{\Z}{\mathbb{Z}}
\newcommand{\F}{\mathbb{F}}
\newcommand{\G}{\mathbb{G}}
\newcommand{\Q}{\mathbb{Q}}
\newcommand{\h}{\mathfrak{h}}
\newcommand{\M}{\mathbf{M}}
\newcommand{\bG}{\mathbf{G}}
\newcommand{\bS}{\mathbf{S}}
\newcommand{\bU}{\mathbf{U}}
\newcommand{\fG}{\mathfrak{G}}
\newcommand{\rM}{\mathrm {M}}
\newcommand{\fp}{\mathfrak{p}}
\newcommand{\rT}{\mathrm {T}}
\newcommand{\Comp}{\mathbf{Comp}_{\Kb_0}}
\newcommand{\Sol}{\mathrm{Sol}}
\newcommand{\cG}{\mathscr{G}}
\newcommand{\cH}{\mathscr{H}}
\newcommand{\cI}{\mathscr {I}}
\newcommand{\cN}{\mathcal{N}}
\newcommand{\Nt}{\mathcal{NP}}
\newcommand{\cO}{\mathscr {O}}
\newcommand{\cU}{\mathscr {U}}
\newcommand{\cS}{\mathscr {S}}
\newcommand{\cT}{\mathscr {T}}
\newcommand{\tv}{\mathtt{v}}
\newcommand{\lam}{\lambda}
\newcommand{\xb}{\overline{x}}
\newcommand{\tame}{\mathrm{t}}
\newcommand{\et}{\mathrm{\acute{e}t}}
\newcommand{\mult}{\mathrm{mult}}
\newcommand{\AL}{\mathfrak{A}\mathrm{L}}
\newcommand{\GL}{\mathrm{GL}}
\newcommand{\m}{\mathfrak{m}}
\newcommand{\D}{\mathcal {D}}
\newcommand{\Ks}{K^{\mathrm{sep}}}
\newcommand{\kb}{\overline{k}}
\newcommand{\Kb}{\overline{K}}
\newcommand{\Rb}{\widetilde{R'}}
\newcommand{\Ub}{\widetilde{U'}}
\newcommand{\Sb}{\widetilde{S'}}
\newcommand{\myprojlim}{\underset{\underset{n}{\longleftarrow}}{\lim}\ }
\newcommand{\car}{characteristic\xspace}
\newcommand {\resp}{\emph{resp.\xspace}}
\newcommand{\GV}{\mathfrak{G}\mathrm{V}}
\newcommand{\pLie}{p\text{-}\mathfrak{L}\mathrm{ie}}
\newcommand{\KS}{\mathrm{KS}}
\newcommand{\HW}{\varphi}
\newcommand{\Kod}{\mathrm{Kod}}
\newcommand{\ie}{\emph{i.e.} }
\newcommand{\cb}{\overline{C}}
\newcommand{\etab}{\overline{\eta}}
\newcommand{\xib}{\overline{\xi}}
\newcommand{\chib}{\overline{\chi}}
\newcommand{\rhob}{\overline{\rho}}
\newcommand{\fppf}{\mathrm{fppf}}
\DeclareMathOperator{\ord}{\mathrm{ord}}
\DeclareMathOperator{\Gal}{\mathrm{Gal}}
\DeclareMathOperator{\Coker}{\mathrm{Coker}} 
\DeclareMathOperator{\Ker}{\mathrm{Ker}}
\DeclareMathOperator{\leng}{\mathrm{leng}}
\DeclareMathOperator{\im}{\mathrm{Im}}
\DeclareMathOperator{\Lie}{\mathrm{Lie}}
\DeclareMathOperator{\Hom}{\mathrm Hom}
\DeclareMathOperator{\Ext}{\mathrm Ext}  
\DeclareMathOperator{\Id}{\mathrm Id}
\DeclareMathOperator{\cHom}{\mathscr{H}\mathit{om}} 
\DeclareMathOperator{\Aut}{Aut}
\DeclareMathOperator{\Spec}{\mathrm {Spec}}
\DeclareMathOperator{\Spf}{\mathrm {Spf}}
\begin{document}
\title[$p$-adic monodromy of Barsotti-Tate groups]
{$p$-adic monodromy of the universal deformation of a HW-cyclic
Barsotti-Tate group}
\author{TIAN Yichao}
\address{LAGA, Institut Galil\'ee, Universit\'e Paris 13,
93430 Villetaneuse, France}
\email{tian@math.univ-paris13.fr}

\begin{abstract}Let $k$ be an algebraically closed field of characteristic $p>0$, and $G$ be a Barsotti-Tate over $k$. We denote by $\bS$ the ``algebraic'' local moduli in characteristic $p$ of $G$, by $\bG$  the universal deformation of $G$ over $\bS$, and by $\bU\subset\bS$ the ordinary locus of $\bG$. The \'etale part of $\bG$ over $\bU$ gives rise to a monodromy representation $\rho_{\bG}$ of the fundamental group of $\bU$ on the Tate module of $\bG$. Motivated by a famous theorem of Igusa, we prove in this article that $\rho_{\bG}$ is surjective if $G$ is connected and HW-cyclic. This latter condition is equivalent to saying that Oort's $a$-number of $G$ equals $1$, and it is satisfied by all connected one-dimensional Barsotti-Tate groups over $k$.
\end{abstract}

\maketitle

\section{Introduction}
\subsection{} A classical theorem of Igusa says that the monodromy representation associated with a versal family 
of ordinary elliptic curves in characteristic $p>0$ is surjective \cite{Ig,Ka}. This important result has deep consequences in the theory of $p$-adic modular forms, and inpired various generalizations. 
Faltings and Chai \cite{Ch,FC} extended it to the  universal  family over the moduli space of higher dimensional principally polarized ordinary abelian varieties in characteristic $p$, and Ekedahl \cite{Ek} generalized it to the jacobian of the universal $n$-pointed curve in characteristic $p$, equipped with a symplectic level structure. We refer to Deligne-Ribet \cite{DR} and Hida \cite{Hi} for other generalizations to some moduli spaces of PEL-type and their arithmetic applications.
Though it has been formulated in a global setting, the proof of Igusa's theorem is purely local, 
and it has got also local generalizations. 
Gross \cite{Go} generalized it to one-dimensional formal $\cO$-modules over a complete discrete valuation ring of characteristic $p$, where  $\cO$  is the integral closure of $\Z_p$ in a finite extension of $\Q_p$. We refer to Chai \cite{Ch} and Achter-Norman \cite{AN} for more results on local monodromy of Barsotti-Tate groups. 
Motivated by these results, it has been longly expected/conjectured that the monodromy of a \emph{versal}
family of ordinary Barsotti-Tate groups in characteristic $p>0$ is maximal.
The aim of this paper is to prove the surjectivity of the monodromy representation 
associated with the universal deformation in characteristic $p$ of a certain class of Barsotti-Tate groups.

\subsection{} To describe our main result, we introduce first the notion of HW-cyclic Barsotti-Tate groups. Let $k$ be an algebraically closed field of characteristic $p>0$, and $G$ be a Barsotti-Tate group over $k$. We denote by $G^\vee$ the Serre dual of $G$, and by $\Lie(G^\vee)$ its Lie algebra. 
The Frobenius homomorphism of $G$ (or dually the Verschiebung of $G^\vee$) induces a semi-linear endomorphism $\varphi_G$ on $\Lie(G^\vee)$,  
called the Hasse-Witt map of $G$ \eqref{BT-HW}. 
We say that $G$ is \emph{HW-cyclic}, if $c=\dim(G^\vee)\geq 1$ and there is a $v\in \Lie(G^\vee)$ such that $v,\varphi_G(v),\cdots, \varphi^{c-1}_G(v)$ form a basis of $\Lie(G^\vee)$ over $k$ (\ref{defn-cyclic}).
We prove in \ref{prop-HW-a} that $G$ is HW-cyclic and non-ordinary if and only if the $a$-number of $G$, 
defined previously by Oort, equals $1$. We can construct HW-cyclic Barsotti-Tate groups as follows. 
Let $r,s$ be relatively prime integers such that $0\leq s\leq r$ and $r\neq 0$, $\lambda=s/r$, $G^\lambda$ be the Barsotti-Tate group over $k$ whose (contravariant) Dieudonn\'e module is generated by an element $e$ over the non-commutative Dieudonn\'e ring with the relation $(F^{r-s}-V^s)\cdot e=0$ \eqref{HW-exem}. It is easy to see that $G^\lambda$ is HW-cyclic for any $0<\lambda<1$. 
Any connected Barsotti-Tate group over $k$ of dimension $1$ and  height $h$ is isomorphic to $G^{1/h}$ \cite[Chap.IV \S8]{De}. 

Let $G$ be a  Barsotti-Tate group of dimension $d$ and height $c+d$ over $k$; assume $c\geq 1$. We denote by  $\bS$ the ``algebraic'' local moduli of $G$ in characteristic $p$, and by  $\bG$ be the  universal deformation of $G$ over $\bS$
(cf. \ref{defn-moduli}). The scheme $\bS$ is affine of ring $R\simeq k[[(t_{i,j})_{1\leq i \leq c,1\leq j\leq d}]]$, and the Barsotti-Tate group $\bG$ is obtained by algebraizing the formal universal deformation of $G$ over $\Spf(R)$ (\ref{cor-alg-univ}). Let $\bU$ be the ordinary locus of $\bG$ (\ie the open subscheme of $\bS$ parametrizing the ordinary fibers of $\bG$), and $\etab$  a geometric point over the generic point of $\bU$. For any integer $n\geq 1$, we denote  by $\bG(n)$ the kernel of the multiplication by $p^n$ on $\bG$, and by 
\[
\rT_p(\bG,\etab)=\varprojlim_n\bG(n)(\etab)
\] the Tate module of $\bG$ at  $\etab$. This is a free $\Z_p$-module of rank $c$. We consider the monodromy representation attached to the \'etale part of $\bG$ over $\bU$
\begin{equation}\label{mono-rep-univ}
\rho_{\bG}:\pi_1(\bU,\etab)\ra \Aut_{\Z_p}(\rT_p(\bG,\etab))\simeq \GL_{c}(\Z_p).\end{equation}
The aim of this paper is to prove the following~:

\begin{thm}\label{thm-main} If $G$ is connected and HW-cyclic, then
the monodromy representation $\rho_\bG$ is surjective.
\end{thm}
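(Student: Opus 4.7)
The plan is to reduce the $p$-adic surjectivity statement to a problem modulo $p$ and then exploit the cyclic structure provided by HW-cyclicity. As a first step, a standard Serre-style lemma on closed subgroups of $\GL_c(\Z_p)$ tells us that it suffices to show the residual representation $\rhob_\bG\colon\pi_1(\bU,\etab)\to\GL_c(\F_p)$ is surjective (for $p\in\{2,3\}$ one must go one step further and verify surjectivity modulo $p^2$, a compatible verification controlled by the first step of the congruence filtration).

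Granting that reduction, I would split the mod-$p$ surjectivity into two pieces: (i) $\rhob_\bG$ is absolutely irreducible, and (ii) $\im(\rhob_\bG)$ contains a transvection. For (i), the HW-cyclic hypothesis means that in a suitable basis of $\Lie(G^\vee)$, the Hasse-Witt map $\HW_G$ is a companion matrix; this shape persists for $\bG/\bS$ with entries in $R=k[[t_{i,j}]]$. Any nontrivial $\pi_1(\bU,\etab)$-stable $\F_p$-subspace of $\rT_p(\bG,\etab)/p$ would, through the Dieudonn\'e description of the \'etale quotient of $\bG$ over $\bU$, translate into an algebraic relation among the parameters $t_{i,j}$, contradicting the versality of the family. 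Granting (i) and (ii), the classification of irreducible subgroups of $\GL_c(\F_p)$ containing a transvection (McLaughlin, Wagner, Zalesski-Serezkin) forces $\im(\rhob_\bG)\supset\mathrm{SL}_c(\F_p)$; a separate and comparatively routine check on the determinant character $\det\rhob_\bG\colon\pi_1(\bU,\etab)\to\F_p^{\times}$ then closes the gap to $\GL_c(\F_p)$.

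The main obstacle is (ii), the construction of a transvection. My approach would be to restrict $\bG$ along a carefully chosen formal curve $C=\Spf k[[t]]\hookrightarrow\Spf R$ obtained by setting all but one of the deformation coordinates $t_{i,j}$ to zero, so that $\bG_C$ is a one-parameter deformation whose Hasse-Witt matrix degenerates to a very explicit shape. Generically on $C$ the Hasse-Witt determinant is a unit times a power of $t$, so $C\cap\bU=\Spec k((t))$, and after a suitable base change the \'etale part of $\bG_C$ over this punctured disc should become a one-dimensional formal $\cO$-module in the sense of Gross \cite{Go}. Gross's local monodromy theorem then supplies a topological generator $\sigma$ of the monodromy along $C$; the delicate point, and the crux of the argument, is to show that the image of $\sigma$ in $\GL_c(\F_p)$ is a rank-one unipotent element --- \ie a transvection --- rather than a unipotent element of higher rank. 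Here the HW-cyclic hypothesis ($a(G)=1$) should enter decisively: because the Hasse-Witt matrix has companion-matrix form, the chosen one-parameter deformation perturbs the module structure along only a single direction, which on the level of the Tate module should translate into a unipotent contribution of rank exactly one.
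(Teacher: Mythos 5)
Your strategy diverges genuinely from the paper's, and it has a real gap at the step you yourself flag as the crux, plus some auxiliary issues that are more than routine.

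The paper does not prove irreducibility plus ``contains a transvection'' and then invoke the McLaughlin--Wagner--Zalesski-Serezkin classification. Instead it first settles the one-dimensional case by induction on $c=\dim(G^\vee)$, using the elementary group-theoretic criterion of Lemmas \ref{lemma-gp-1} and \ref{lemma-gp-2}: one exhibits inside $\im(\rhob_\bG)$ (a) a \emph{non-split Cartan subgroup} of $\GL_c(\F_p)$ and (b) the full ``mirabolic'' $\begin{pmatrix}\GL_{c-1}(\Z_p)&*\\0&1\end{pmatrix}$. The Cartan comes from restricting $\bG$ along a trait $k[[\pi]]\ra\bS$ with $t_1\mapsto\pi$ and invoking \ref{prop-mono-trait}(i); the mirabolic comes from restricting along the completed localization at the generic point of a coordinate hyperplane, where the \'etale part splits off a $\Q_p/\Z_p$, and then analysing the extension class \eqref{class-cb} via the deformation-theoretic Lemma \ref{lemma-key}. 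The general (higher-dimensional) case is then reduced to the one-dimensional one by passing to the generic point of an appropriate Newton stratum $V_\beta$ and quotienting by the maximal multiplicative subgroup, using Oort's Theorem \ref{thm-oort} and the versality criterion \ref{prop-HW-versal}(ii) in Lemma \ref{rem-Lau}. The non-split Cartan automatically gives irreducibility and also handles the determinant, which your plan leaves as a separate loose end.

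The concrete gaps in your approach. First, the irreducibility claim --- that a $\pi_1(\bU,\etab)$-stable proper subspace of $\rT_p(\bG,\etab)/p$ would ``translate into an algebraic relation among the $t_{i,j}$, contradicting versality'' --- is unsupported and I do not see the mechanism. Versality is a statement about the Kodaira-Spencer map at the closed point of $\bS$; a stable subspace of the Tate module over the generic point of $\bU$ is a statement about the monodromy over a \emph{different} open, and there is no direct implication from one to the other. (In the paper irreducibility falls out automatically once you see a non-split Cartan inside the image; it is not argued abstractly.) Second, the transvection construction is also underdetermined: restricting along a generic one-parameter curve as you propose typically gives, after the analysis in Section 5, a non-split Cartan subgroup when $G_T$ is connected with $h(G_T)=1$ (Prop.~\ref{prop-mono-trait}(i)), which contains no nontrivial unipotent; to get $p$-part in the local monodromy you must degenerate so that the restricted group is \emph{not} connected, i.e.\ so that the \'etale part is nontrivial, which is exactly the role of the completed localization $R'=\widehat{R}_\fp$ and the cohomology class $\cb_\sigma$ in the paper. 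Gross's theorem applies to one-dimensional formal $\cO$-modules over a trait, but it is not clear how to arrange for your formal curve to produce such a module, nor why the resulting unipotent element should have rank one. Third, even granting a transvection, the classification you cite applies cleanly to irreducible groups \emph{generated} by transvections and in any case also allows $\mathrm{Sp}$, $O$, $\mathrm{SU}$ and some exceptional possibilities; you would still have to rule these out before concluding $\mathrm{SL}_c\subset\im(\rhob_\bG)$, and then still supply the determinant surjectivity. Finally, your sketch does not address the reduction from $d\geq 2$ to $d=1$ at all; the paper devotes Section 8 and Oort's Newton stratification theorem to precisely this.
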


 Igusa's theorem mentioned above corresponds to Theorem \ref{thm-main} for $G=G^{1/2}$ (cf. \ref{thm-Igusa}). My interest in the $p$-adic monodromy problem started with the second part of my PhD thesis \cite{tian}, 
where I guessed \ref{thm-main} for $G=G^{\lam}$ with $0< \lam <1$ and proved it for $G^{1/3}$. 
After I posted the manuscript on ArXiv \cite{tian2},
Strauch proved the one-dimensional case of \ref{thm-main} by using Drinfeld's level structures \cite[Theorem 2.1]{str}.  Later on, Lau \cite{lau} proved \ref{thm-main} without the assumption that $G$ is HW-cyclic. 
By using the  Newton stratification of the universal deformation space of $G$ due to Oort, Lau reduced
the higher dimensional case to the one-dimensional case treated by Strauch. 
In fact, Strauch and Lau considered more generally the monodromy representation 
over each $p$-rank stratum of the universal deformation space.   Recently, Chai and Oort \cite{CO} proved the maximality of the $p$-adic monodromy  over each ``central leaf'' in the moduli space of abelian varieties which is not contained in the supersingular locus.
In this paper, we provide first a different proof of the one-dimensional case of \ref{thm-main}. 
Our approach is purely  characteristic $p$, while Strauch used Drinfeld's level structure in characteristic $0$. 
Then by following Lau's strategy, we give a new (and easier) argument to reduce the general case of  \ref{thm-main}
to the one-dimensional case for HW-cyclic groups. The essential part of our argument is a versality criterion by Hasse-Witt maps of deformations of a connected one-dimensional Barsotti-Tate group (Prop. \ref{prop-HW-versal}). This criterion can be considered as a generalization of another theorem of Igusa 
which claims that the Hasse invariant of a versal family of elliptic curves in characteristic $p$ has simple zeros. Compared with Strauch's approch, our characteristic $p$ approach has the advantage of giving also results 
on the monodromy of Barsotti-Tate groups over a discrete valuation ring of characteristic $p$.

\subsection{} Let $A=k[[\pi]]$ be the ring of formal power series over $k$ in the variable $\pi$, $K$ its fraction field, and $\tv$  the valuation on $K$ normalized by $\tv(\pi)=1$. We fix an algebraic closure $\Kb$ of $K$, and let $\Ks$ be the separable closure of $K$ contained in $\Kb$, $I$ be the Galois group of $\Ks$ over $K$, $I_p\subset I$ be the wild inertia subgroup, and $I_t=I/I_p$ the tame inertia group. For every integer $n\geq 1$, there is a canonical surjective character $\theta_{p^n-1}:I_t\ra \F^{\times}_{p^n}$ \eqref{galois-char}, where $\F_{p^n}$ is the finite subfield of $k$ with $p^n$ elements.

We put $S=\Spec(A)$. Let $G$ be a Barsotti-Tate group over $S$, $G^\vee$ be its Serre dual, and  $\Lie(G^\vee)$ the Lie algebrasof $G^\vee$. Recall that the Frobenius homomorphism of $G$  induces a semi-linear endomorphism $\varphi_G$ of $\Lie(G^\vee)$, called the Hasse-Witt map of $G$. We define  $h(G)$ to be the valuation of the determinant of a matrix of $\varphi_G$, and call it the \emph{Hasse invariant} of $G$ \eqref{defn-hw-index}. We see easily that $h(G)=0$ if and only if $G$ is ordinary over $S$, and $h(G)<\infty$ if and only if $G$ is generically ordinary. If  $G$  is connected of height $2$ and dimension $1$, then $h(G)=1$ is equivalent to that $G$ is versal \eqref{thm-Igusa}.

\begin{prop} Let $S=\Spec(A)$ be as above, $G$ be a connected HW-cyclic Barsotti-Tate group with  Hasse invariant $h(G)=1$, and  $G(1)$ the kernel of the multiplication by $p$ on $G$. Then the action of $I$ on $G(1)(\Kb)$ is tame; moverover, $G(1)(\Kb)$ is an $\F_{p^c}$-vector space of dimension $1$ on which the induced action of $I_t$ is given by the surjective character $\theta_{p^c-1}:I_t\ra \F_{p^c}^\times$.
\end{prop}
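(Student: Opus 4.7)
\medskip
\noindent\textbf{Proof plan.}

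First, I exploit HW-cyclicity to put $\varphi_G$ in companion form. Fix a cyclic vector $v\in\Lie(G^\vee)$ so that $(v,\varphi_G(v),\ldots,\varphi_G^{c-1}(v))$ is an $A$-basis, and write
\[
\varphi_G^c(v)=a_0 v+a_1\varphi_G(v)+\cdots+a_{c-1}\varphi_G^{c-1}(v),\qquad a_i\in A.
\]
The matrix of $\varphi_G$ in this basis is the companion matrix with determinant $\pm a_0$, so $h(G)=1$ forces $\tv(a_0)=1$, i.e.\ $a_0$ is a uniformizer of $A$ up to a unit. Moreover, $G$ being connected is equivalent to $\varphi_G$ reducing to a nilpotent semi-linear operator on $\Lie(G_0^\vee)$ (where $G_0=G\otimes_A k$), which in the cyclic basis forces every $\bar a_i=a_i\bmod\pi$ to vanish, hence $a_i\in\m_A=(\pi)$ for each $i$.

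The next and crucial step is to convert this semi-linear datum into an explicit polynomial equation cutting out $G(1)(\Kb)$ as an $I$-module. Since $h(G)<\infty$, $\varphi_G$ is generically invertible and $G_K$ is ordinary, so $G(1)(\Kb)$ identifies $I$-equivariantly with the $\Kb$-points of the \'etale quotient of $G(1)_K$, of order $p^c$. By Cartier duality this \'etale quotient is dual to the Frobenius kernel $G^\vee[F]$, a finite flat connected group scheme of order $p^c$ over $A$ with $\Lie(G^\vee[F])=\Lie(G^\vee)$. Using a Cartier/Dieudonn\'e-module description of $G^\vee[F]$ over the equal-characteristic DVR $A$ together with the HW-cyclic presentation above, I expect to obtain an $I$-equivariant identification of $G(1)(\Kb)$ with the zero set in $\Ks$ of an $\F_p$-linear additive polynomial
\[
P(X)=X^{p^c}-b_{c-1}X^{p^{c-1}}-\cdots-b_1 X^p-b_0 X\in A[X],
\]
whose coefficients $b_i$ are obtained from the $a_i$ (possibly with $p$-power twists reflecting the semi-linearity of $\varphi_G$) and therefore still satisfy $\tv(b_0)=1$ and $\tv(b_i)\geq 1$ for every $i$. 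The $\F_p$-vector space structure on $G(1)(\Kb)$ then corresponds to the additive structure on the roots inside $\G_a(\Ks)$.

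Third, a Newton polygon analysis of $P(X)/X=X^{p^c-1}-b_{c-1}X^{p^{c-1}-1}-\cdots-b_0$ yields a single segment from $(0,1)$ to $(p^c-1,0)$ of slope $-1/(p^c-1)$, so every nonzero root has valuation $1/(p^c-1)$, the splitting field equals the totally tamely ramified extension $K(\pi^{1/(p^c-1)})$, and $I_p$ acts trivially. Fixing a nonzero root $X_0$, the character $\sigma\mapsto\sigma(X_0)/X_0$ lands in $\mu_{p^c-1}(k)=\F_{p^c}^\times$ and, by the very construction of $\theta_{p^c-1}$, coincides with $\theta_{p^c-1}$. Since $\theta_{p^c-1}$ is surjective, the $I_t$-orbit of $X_0$ exhausts all $p^c-1$ nonzero roots, so the zero set of $P$ is the $\F_{p^c}$-line $\F_{p^c}\cdot X_0\subset\Ks$; this endows $G(1)(\Kb)$ with a one-dimensional $\F_{p^c}$-vector space structure on which $I_t$ acts via $\theta_{p^c-1}$, as required.

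\medskip
The main difficulty lies in the second step: producing the polynomial $P(X)$ from the semi-linear datum $(\Lie(G^\vee),\varphi_G)$. This requires a careful integral Cartier/Dieudonn\'e-module calculation for the Frobenius kernel $G^\vee[F]$ over the equal-characteristic DVR $A$, together with a compatibility of Cartier duality that brings one down to the \'etale quotient of $G(1)_K$. HW-cyclicity is exactly what allows one to cut out $G^\vee[F]$ by a single polynomial (one cyclic generator of the Dieudonn\'e-type module) rather than by a system of $c$ equations, while the hypothesis $h(G)=1$ is precisely what makes the resulting Newton polygon a single clean segment of the desired slope.
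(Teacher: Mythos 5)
Your overall strategy matches the paper's: put $\varphi_G$ in companion form, convert the semi-linear data into a single additive polynomial $P(X)$ cutting out $G(1)(\Kb)$, and run a Newton-polygon analysis. Your route to the polynomial (via Cartier duality with $G^\vee[F]$) is the dual of the paper's (which works directly with $\Ker V_G$ and the equivalence $\GV_S\simeq \pLie_S$ of \ref{GV-pLie}, packaged in Lemma \ref{lemma-HW-V}); both give the same $P$, though your step is only sketched and would need the computation of \ref{lemma-HW-V}(i) to become a proof.

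The genuine gap is in your final step. You claim that $\sigma\mapsto\sigma(X_0)/X_0$ is a character landing in $\mu_{p^c-1}(k)\subset\Ks^\times$. Neither part is correct: the map is not multiplicative (one has $\sigma\tau(X_0)/X_0=\sigma(\tau(X_0)/X_0)\cdot\sigma(X_0)/X_0$, and $\tau(X_0)/X_0$ is not Galois-invariant), and $\sigma(X_0)/X_0$ is merely a unit of the splitting field, not a root of unity. What is true is that its \emph{residue} modulo $\m_0^+$ is $\theta_{p^c-1}(\sigma)$. Consequently the roots of $P$ are not literally the set $\F_{p^c}\cdot X_0$ inside $\Ks$, and the $\F_{p^c}$-structure on $G(1)(\Kb)$ has to be imported differently. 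The paper does this by mapping each root to its class in $V_{1/(q-1)}=\m_{1/(q-1)}/\m^+_{1/(q-1)}$, a space on which $I_t$ genuinely acts by $\theta_{q-1}$ and whose $I$-stable $\F_p$-subspaces are $\F_q$-subspaces (Prop.~\ref{prop-V-alpha}). The whole proposition then reduces to showing that this reduction map $\psi$ is injective on the roots of $P$ — a non-trivial point you have not addressed. The paper proves injectivity by an explicit computation: substitute $X=x_0y$, factor $P(x_0y)=x_0^{q}Q_1(y)$ with $Q_1(y)=y^q+b_c y^{p^{c-1}}+\cdots+b_1 y$, observe $\tv(b_1)=0$ and $\tv(b_i)>0$ for $i\geq 2$, and deduce that the residues of the roots of $Q_1$ are exactly the $q$ distinct elements of $\F_q\cdot \overline{b}_1^{1/(q-1)}$. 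You should add this computation (or an equivalent argument via Hensel's lemma) to close the gap. Tameness of the action also follows automatically from Prop.~\ref{prop-V-alpha} once $\psi$ is shown to be an injective $I$-map, so you need not argue separately that the splitting field is $K(\pi^{1/(p^c-1)})$.
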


This proposition is an analogue in characteristic $p$ of Serre's result \cite[Prop. 9]{Se} on the tameness of the monodromy associated with one-dimensional formal groups over a trait of mixed characteristic. We refer to \ref{prop-mono-trait} for the proof of this proposition and more results on the $p$-adic monodromy of HW-cyclic Barsotti-Tate groups over a trait in characteristic $p$. 

\subsection{} This paper is organized as follows. In Section 2, we review some well known facts on ordinary Barsotti-Tate groups. Section 3 contains some  preliminaries on the Dieudonn\'e theory and the deformation theory of Barsotti-Tate groups. In Section 4, after establishing some basic properties of HW-cyclic groups, we give the fundamental relation between the versality of a Barsotti-Tate group and the coefficients of its Hasse-Witt matrix (Prop. \ref{prop-HW-versal}). Section 5 is devoted to the study of the monodromy of a HW-cyclic Barsotti-Tate group over a  complete trait of characteristic $p$. Section 6 is totally elementary, and contains a criterion \eqref{lemma-gp-1} for the surjectivity of a homomorphism from a profinite group to $\GL_n(\Z_p)$. In Section 7, we prove the one-dimensional case of Theorem \ref{thm-main}. Finally in Section 8, we follow Lau's strategy and complete the proof of \ref{thm-main} by reducing the general case to the one-dimensional case treated in Section 7. 

\subsection{Acknowledgement} This paper is an expanded version of the second part of my Ph.D. thesis at University Paris 13. I would like to express my great gratitude to my thesis advisor Prof. A. Abbes for his encouragement during this work, and also for his various helpful comments on earlier versions of this paper. I also thank  heartily E. Lau, F. Oort and  M. Strauch for interesting discussions and valuable suggestions.

\subsection{Notations}\label{Notations} Let $S$ be a scheme of characteristic $p>0$.  A {\em BT-group} over $S$
stands for a Barsotti-Tate group over $S$. Let $G$ be a commutative
finite group scheme (\resp a BT-group) over $S$. We denote by
$G^\vee$ its Cartier dual (\resp its Serre dual), by $\omega_G$ the 
sheaf of invariant differentials of $G$ over $S$, and by $\Lie(G)$
the sheaf of Lie algebras of $G$. If $S=\Spec(A)$ is affine and  there is no risk of confusions, we also use $\omega_G$ and $\Lie(G)$ to denote the correponding $A$-modules of global sections.
We put $G^{(p)}$ the pull-back of $G$ by the absolute Frobenius
of $S$, $F_G\colon G\ra G^{(p)}$ the Frobenius homomorphism and
$V_G\colon G^{(p)}\ra G$ the Verschiebung homomorphism. If $G$ is a
BT-group and  $n$ an integer $\geq 1$, we denote by $G(n)$ the
kernel of { the} multiplication by $p^n$ on $G$; we have
$G^\vee(n)=(G^\vee)(n)$ by definition. For an $\cO_S$-module $M$, we denote by $M^{(p)}=\cO_S\otimes_{F_S}M$ the scalar extension of $M$ by the absolute Frobenius of $\cO_S$. If $\varphi: M\ra N$ be a semi-linear homomorphism of $\cO_S$-modules, we denote by $\widetilde{\varphi}:M^{(p)}\ra N$ the linearization of $\varphi$, \ie we have $\widetilde{\varphi}(\lambda\otimes x)=\lambda\cdot \varphi(x)$, where  $\lambda$ (\resp $x$) is a local section of $\cO_S$ (\resp of $M$).

Starting from Section 5,  $k$ will denote an algebraically closed field of characteristic $p>0$.

\section{Review of ordinary Barsotti-Tate groups}
In this section, $S$ denotes a scheme of \car $p>0$.
\subsection{}\label{Lie-alg}
Let $G$ be a commutative group scheme, locally free of finite type
over $S$. We have a canonical isomorphism of coherent
$\cO_S$-modules  \cite[2.1]{Il}
\begin{equation}\label{dual-Groth}\Lie(G^\vee)\simeq
\cHom_{S_{\mathrm{fppf}}}(G,\G_a) ,
\end{equation}
where $\cHom_{S_{\mathrm{fppf}}}$ is the sheaf of homomorphisms in
the category of abelian $\fppf$-sheaves over $S$, and $\G_a$ is the
additive group scheme. Since $\G_a^{(p)}\simeq \G_a$, the Frobenius homomorphism of $\G_a$ induces
an endomorphism
\begin{equation}\label{hw-finite}
\varphi_G:\Lie(G^\vee)\ra \Lie(G^\vee),
\end{equation} semi-linear with respect
to the absolute Frobenius map $F_S:\cO_S\ra \cO_S$; we call it the
\emph{Hasse-Witt} map of $G$. By the functoriality of Frobenius, $\varphi_G$ is also the canonical map induced by the Frobenius of $G$, or dually by the Verschiebung of $G^\vee$.

\subsection{}\label{sect-GV}  By a \emph{commutative $p$-Lie algebra} over $S$, we mean a pair
$(L,\varphi)$, where $L$ is an $\cO_S$-module  locally free of finite type, and $\varphi:L\ra L$ is a  semi-linear endomorphism with
respect to the absolute Frobenius $F_S:\cO_S\ra \cO_S$. When there
is no risk of confusions, we omit $\varphi$ from the notation. We
denote by $\pLie_S$ the category of commutative $p$-Lie algebras
over $S$.

Let $(L,\varphi)$ be an object of $\pLie_S$. We denote by
\[
\cU(L)=\mathrm{Sym}(L)=\oplus_{n\geq 0}\ \mathrm{Sym}^{n} (L),
\] the symmetric algebra of $L$ over $\cO_S$.   Let $\cI_p(L)$ be
the ideal sheaf of $\cU(L)$  defined, for an open subset $V\subset
S$, by
\[
\Gamma(V,\cI_p(L))=\{x^{\otimes p}-\varphi(x)\ ;\  x\in \Gamma(V,\cU(L))\},
\]
where $x^{\otimes p}=x\otimes x\otimes\cdots\otimes x\in \Gamma(V,\mathrm{Sym}^{p} (L))$.
We put $\cU_p(L)=\cU(L)/\cI_p(L)$, and call it the
\emph{$p$-enveloping algebra of $(L,\varphi)$}. We endow $\cU_p(L)$ with the
structure of a Hopf-algebra with the comultiplication given by
$\Delta(x)=1\otimes x +x\otimes 1 $ and the coinverse given by
$i(x)=-x$.

Let $G$ be a commutative group scheme, locally free  of finite type
over $S$. We say that $G$ is \emph{of coheight one} if the
Verschiebung $V_G: G^{(p)}\ra G$ is the zero homomorphism. We denote by $\GV_S$
the category of such objects. For  an object $G$ of
$\GV_S$, the Frobenius  $F_{G^\vee}$ of  $G^\vee$ is zero, so the Lie algebra $\Lie(G^\vee)$ is
locally free of finite type over $\cO_S$ (\cite{DG} $\mathrm{VII_A}$
Th\'eo. 7.4(iii)). The Hasse-Witt map of $G$ \eqref{hw-finite}
endows $\Lie(G^\vee)$ with  a commutative $p$-Lie algebra structure
over $S$.

\begin{prop}[\cite{DG} $\mathrm{VII_{A}}$, Th\'eo. 7.2 et
7.4]\label{GV-pLie} The functor $\GV_S\ra \pLie_S$  defined by
$G\mapsto \Lie(G^\vee)$ is an anti-equivalence of categories;  a
quasi-inverse is given by 
$(L,\varphi)\mapsto \Spec (\cU_p(L))$.
\end{prop}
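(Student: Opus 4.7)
The plan is to construct the quasi-inverse $\Psi:(L,\varphi)\mapsto\Spec(\cU_p(L))$ explicitly and then verify the two natural isomorphisms $F\circ\Psi\simeq\mathrm{id}_{\pLie_S}$ and $\Psi\circ F\simeq\mathrm{id}_{\GV_S}$, where $F$ denotes the functor $G\mapsto\Lie(G^\vee)$.

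First I would endow $\cU_p(L)=\mathrm{Sym}(L)/\cI_p(L)$ with a Hopf algebra structure, by showing that the ideal $\cI_p(L)$ generated by $x^{\otimes p}-\varphi(x)$ (for local sections $x$ of $L$) is a Hopf ideal for the comultiplication $\Delta$ on $\mathrm{Sym}(L)$ characterized by $\Delta(x)=x\otimes 1+1\otimes x$. The crucial input is the characteristic-$p$ identity
\[
\Delta(x^{\otimes p})=(x\otimes 1+1\otimes x)^{p}=x^{\otimes p}\otimes 1+1\otimes x^{\otimes p},
\]
so that $\Delta(x^{\otimes p}-\varphi(x))=(x^{\otimes p}-\varphi(x))\otimes 1+1\otimes(x^{\otimes p}-\varphi(x))\in\cI_p(L)\otimes\cU(L)+\cU(L)\otimes\cI_p(L)$; compatibility with the counit and antipode is straightforward. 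A local basis $e_1,\dots,e_n$ of $L$ yields a basis $\{e_1^{i_1}\cdots e_n^{i_n}:0\leq i_j<p\}$ of $\cU_p(L)$, so $G:=\Spec(\cU_p(L))$ is a finite locally free commutative group scheme of rank $p^n$ over $S$. The condition $V_G=0$ follows because the Verschiebung is dual to the Frobenius of $G^\vee$, whose action on the primitive elements (namely $L$) sends $x\mapsto x^{\otimes p}$, which equals $\varphi(x)\in L$ in $\cU_p(L)$; the difference $x^{\otimes p}-\varphi(x)$ is zero by construction.

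Next, for $G=\Spec(\cU_p(L))$, I would identify $F(G)=\Lie(G^\vee)$ with $L$ using the isomorphism $\Lie(G^\vee)\simeq\cHom_{S_{\fppf}}(G,\G_a)$ of \eqref{dual-Groth}, under which homomorphisms $G\to\G_a$ correspond to primitive elements of $\cU_p(L)$. Using the monomial basis and the binomial identity, one checks that the primitives of $\cU_p(L)$ are exactly $L$: higher-degree primitive monomials $x^{\otimes p^k}$ in $\mathrm{Sym}(L)$ collapse to elements of $L$ through iterated application of $\varphi$ modulo $\cI_p(L)$. The Hasse-Witt endomorphism on this identification is then computed to be $\varphi$: an element $e\in L$, viewed as $e:G\to\G_a$, is sent by $F_{\G_a}$ to $e^{\otimes p}$, which is $\varphi(e)$ in $\cU_p(L)$. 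This yields $F\circ\Psi\simeq\mathrm{id}_{\pLie_S}$ and in particular essential surjectivity.

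For the other composition, given $G\in\GV_S$ set $L=\Lie(G^\vee)$ and consider the tautological inclusion $L\hookrightarrow\cO_G$ corresponding to the identification of $L$ with primitives in $\cO_G$. This extends to a morphism of Hopf algebras $\mathrm{Sym}(L)\to\cO_G$, which factors through $\cU_p(L)\to\cO_G$ since $F_{\G_a}$ acts on $\cO_G$ by $e\mapsto e^{p}$, sending $e^{\otimes p}-\varphi(e)$ to zero. The main obstacle is showing that this natural Hopf algebra morphism is an isomorphism. I would first verify locally on $S$ that both algebras are locally free of the same rank $p^{n}$, where $n=\rk L$: for the target, $F_{G^\vee}=0$ forces the order of $G^\vee$ (hence of $G$) to be $p^{\rk\omega_{G^\vee}}=p^{n}$ by SGA 3 $\mathrm{VII_A}$, together with the duality $\omega_{G^\vee}\simeq L^\vee$. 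One must then promote the rank equality to an isomorphism, which reduces to showing that $L\subset\cO_G$ generates $\cO_G$ as an $\cO_S$-algebra. This generation statement is the genuine technical heart of SGA 3 $\mathrm{VII_A}$ Théorème 7.4, resting on the structure theorem for commutative group schemes of height one with zero Frobenius; combined with a Nakayama argument for surjectivity and the rank equality, it produces the isomorphism and hence $\Psi\circ F\simeq\mathrm{id}_{\GV_S}$.
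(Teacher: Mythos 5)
The paper gives no proof of this proposition: it is quoted from \cite{DG}, $\mathrm{VII_A}$, Th\'eor\`emes 7.2 et 7.4 (in the dual formulation, with Verschiebung nul in place of Frobenius nul), so there is no argument of the author's to measure yours against. Your outline is essentially the standard proof of that theorem: the Hopf-ideal computation for $\cI_p(L)$, the monomial basis giving rank $p^n$, the identification of $L$ with the primitives of $\cU_p(L)$ and of the Hasse--Witt map with $\varphi$, and for the reverse composition the natural surjection $\cU_p(L)\ra\cO_G$ plus a rank count. You correctly isolate the generation statement --- that the primitives $L=\cHom_{S_{\fppf}}(G,\G_a)$ generate $\cO_G$ as an $\cO_S$-algebra --- as the genuinely hard step; deferring it to the structure theorem for height-one group schemes is exactly what the cited source does, and is no less than what the paper itself does by citing \cite{DG}.

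One step does not work as written: your verification that $V_G=0$ for $G=\Spec(\cU_p(L))$. You invoke ``the Frobenius of $G^\vee$, whose action on the primitive elements sends $x\mapsto x^{\otimes p}=\varphi(x)$''; but the endomorphism $x\mapsto x^{p}$ of the module of primitives is the Hasse--Witt map, induced by $F_G$ (equivalently by $V_{G^\vee}$), \emph{not} by $F_{G^\vee}$, and the computation you describe shows that this map equals $\varphi$ --- which is the identification you need later anyway --- not that anything vanishes. The correct argument is simpler: each $e\in L$ is a homomorphism $e\colon G\ra\G_a$, so functoriality of the Verschiebung gives $e\circ V_G=V_{\G_a}\circ e^{(p)}=0$ because $V_{\G_a}=0$; since $e_1,\dots,e_n$ generate $\cU_p(L)$ as an $\cO_S$-algebra, they define a closed immersion (in particular a monomorphism) $G\hra\G_a^n$, and therefore $V_G=0$. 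With that repair the sketch is a faithful summary of the SGA~3 proof, the remaining load-bearing input being exactly the structure theorem you name.
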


\subsection{}\label{desc-ex}
Assume $S=\Spec(A)$ affine. Let $(L,\varphi)$ be an object of $
\pLie_S$ such that $L$ is free of rank $n$ over $\cO_S$,
$(e_1,\cdots, e_n)$ be a basis of $L$ over $\cO_S$,
$(h_{ij})_{1\leq i, j\leq n}$ be the matrix of $\varphi$ under the basis $(e_1,\cdots,e_n)$, \ie
 $\varphi(e_j)=\sum_{i=1}^n
h_{ij}e_i$ for $1\leq j\leq n$. Then the group scheme associated to $(L,\varphi)$ is
explicitly given by
\[
\Spec(\cU_p(L))=\Spec  \biggl (A[X_1,\cdots,X_n]/(X_j^p-\sum_{i=1}^nh_{ij}X_i)_{1\leq j\leq n}\biggr),
\]
with the comultiplication $\Delta(X_j)=1\otimes X_j+X_j\otimes 1$.
By the Jacobian criterion of \'etaleness [EGA $\mathrm{IV_0}$
22.6.7], the finite group scheme $\Spec(\cU_p(L))$ is \'etale over
$S$ if and only if the matrix $(h_{ij})_{1\leq i, j\leq n}$ is
invertible. This condition is equivalent to that the  linearization
of $\varphi$  is  an isomorphism.

\begin{cor}\label{cor-etale-GV} An object $G$ of
$\GV_S$ is \'etale over $S$, if and only if the linearization  of
its Hasse-Witt map \eqref{hw-finite} is an isomorphism.
\end{cor}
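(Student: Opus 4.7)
The statement is clearly a direct corollary of the explicit description given in \ref{desc-ex} together with the anti-equivalence \ref{GV-pLie}, so my plan is to show how to package these into a clean argument.

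First, I would note that étaleness is local on $S$, so I may assume $S=\Spec(A)$ is affine and that $L=\Lie(G^\vee)$ is free over $\cO_S$, say of rank $n$, with a chosen basis $(e_1,\dots,e_n)$. Let $(h_{ij})$ denote the matrix of $\varphi_G$ in this basis, so that $\varphi_G(e_j)=\sum_i h_{ij}e_i$. By \ref{GV-pLie}, the group scheme $G$ is canonically isomorphic to $\Spec(\cU_p(L))$, and the description in \ref{desc-ex} presents this as $\Spec\bigl(A[X_1,\dots,X_n]/(X_j^p-\sum_i h_{ij}X_i)_{1\leq j\leq n}\bigr)$.

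Next I would apply the Jacobian criterion of étaleness. In characteristic $p$, the partial derivative of $X_j^p-\sum_i h_{ij}X_i$ with respect to $X_k$ is simply $-h_{kj}$, so the Jacobian matrix is (up to sign) the matrix $(h_{ij})$. Hence $G$ is étale over $S$ if and only if $(h_{ij})$ is invertible in $A$. It remains to observe that the matrix of the linearization $\widetilde{\varphi_G}:L^{(p)}\to L$ in the bases $(e_j\otimes 1)$ and $(e_i)$ is exactly $(h_{ij})$, so invertibility of this matrix is equivalent to $\widetilde{\varphi_G}$ being an isomorphism. This gives the claim in the affine, free case, and since both conditions are local the corollary follows in general.

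There is no real obstacle here: the only point to be careful about is the vanishing of the derivatives of the $X_j^p$ terms in characteristic $p$, which is precisely what makes the Jacobian matrix depend only on the Hasse-Witt matrix and not on any $p$-th power corrections. The argument is completely formal once one accepts \ref{GV-pLie} and the explicit presentation of $\cU_p(L)$.
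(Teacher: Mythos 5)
Your proof is correct and takes essentially the same route as the paper: reduce to the affine case with $L=\Lie(G^\vee)$ free, invoke the anti-equivalence of \ref{GV-pLie} to identify $G$ with $\Spec(\cU_p(L))$, and apply the Jacobian criterion to the explicit presentation of \ref{desc-ex}. You merely make explicit the computation of the Jacobian matrix that the paper leaves as ``the last remark of \ref{desc-ex}.''
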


\begin{proof}
The problem being local over  $S$, we may assume $S$  affine and
$L=\Lie(G^\vee)$ free over $\cO_S$. By Theorem \ref{GV-pLie}, $G$ is
isomorphic to $\Spec(\cU_p(L))$,  and we conclude by the last remark
of \ref{desc-ex}.
\end{proof}

\subsection{}  Let $G$ be a BT-group over $S$ of height $c+d$ and dimension $d$, $G^\vee$ be its Serre dual. The Lie algebra $\Lie(G^\vee)$ is an $\cO_S$-module locally free of rank $c$, and canonically identified with $\Lie(G^\vee(1))$(\cite{BBM} 3.3.2).
We define the {\em Hasse-Witt map} of $G$
\begin{equation}\label{BT-HW}
\HW_G:\Lie(G^\vee)\ra\Lie(G^\vee)
\end{equation}
 to be that of $G(1)$ \eqref{hw-finite}.

\subsection{}Let $k$ be a field of characteristic $p>0$, $G$ be a BT-group over $k$. Recall that we have a canonical exact
sequence of BT-groups over $k$
\begin{equation}\label{decomp-BT}0\ra G^{\circ}\ra G\ra G^{\et}\ra 0\end{equation}
with $G^{\circ}$ connected and $G^\et$ \'etale (\cite{De} Chap.II, \S 7). This induces an exact sequence of Lie algebras 
\begin{equation}\label{decomp-Lie}
0\ra \Lie(G^{\et\vee})\ra \Lie(G^\vee)\ra \Lie(G^{\circ\vee})\ra 0, 
\end{equation}
 compatible with Hasse-Witt maps.

\begin{prop}\label{prop-etale-HW} Let $k$ be a field of characteristic $p>0$, $G$ be a BT-group over $k$. Then
$\Lie(G^{\et\vee})$  is the unique maximal $k$-subspace $V$ of   $ \Lie(G^\vee)$ with the following properties:

\emph{(a)} $V$ is stable under $\varphi_G$;

\emph{(b)} the restriction of $\varphi_G$ to $V$ is injective. 
\end{prop}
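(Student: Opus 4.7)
The plan is to establish two facts: (1) $\Lie(G^{\et\vee})$ itself satisfies conditions (a) and (b); and (2) any subspace $V\subseteq\Lie(G^\vee)$ satisfying (a) and (b) is contained in $\Lie(G^{\et\vee})$. Together these give the maximality and uniqueness assertions. For (1), the exact sequence \eqref{decomp-Lie} is compatible with Hasse--Witt maps by functoriality, so $\Lie(G^{\et\vee})$ is $\varphi_G$-stable, which is (a). The restriction of $\varphi_G$ to $\Lie(G^{\et\vee})$ coincides with $\varphi_{G^\et}$, and since $G^\et(1)$ is étale, Corollary~\ref{cor-etale-GV} applied to $G^\et(1)$ tells us the linearization of $\varphi_{G^\et}$ is an isomorphism; in particular $\varphi_{G^\et}$ is injective, giving (b).

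The crux of (2) is the sub-claim that $\varphi_{G^\circ}$ is nilpotent on $\Lie(G^{\circ\vee})$. Indeed, since $G^\circ$ is a connected BT-group, the kernel $G^\circ(1)$ of multiplication by $p$ is an infinitesimal finite commutative group scheme over $k$, so its Frobenius satisfies $F_{G^\circ(1)}^N = 0$ for some integer $N\geq 1$. Under the identification $\Lie(G^{\circ\vee}) = \Lie(G^\circ(1)^\vee)$, the endomorphism $\varphi_{G^\circ}$ is the one induced by $F_{G^\circ(1)}$ (cf.~\ref{Lie-alg}): explicitly, viewing $f\in\Lie(G^{\circ\vee})$ as a homomorphism $G^\circ(1)\to\G_a$, one has $\varphi_{G^\circ}(f) = f^{(p)}\circ F_{G^\circ(1)}$. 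Iterating, $\varphi_{G^\circ}^N(f) = f^{(p^N)}\circ F_{G^\circ(1)}^N = 0$, which proves the sub-claim.

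Now let $V\subseteq\Lie(G^\vee)$ satisfy (a) and (b), and let $\pi:\Lie(G^\vee)\twoheadrightarrow\Lie(G^{\circ\vee})$ be the projection in \eqref{decomp-Lie}, whose kernel is $\Lie(G^{\et\vee})$. The image $\pi(V)$ is $\varphi_{G^\circ}$-stable by (a), so the sub-claim gives $\varphi_{G^\circ}^N(\pi(V)) = 0$; by compatibility this translates to $\varphi_G^N(V)\subseteq\Lie(G^{\et\vee})$. Combined with $\varphi_G^N(V)\subseteq V$ from the stability of $V$, we get $\varphi_G^N(V)\subseteq V\cap\Lie(G^{\et\vee})$. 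On the other hand, condition (b) iterated forces $\varphi_G^N|_V$ to be injective; over a perfect field a set-injective semi-linear endomorphism of a finite-dimensional $k$-vector space into itself is automatically bijective (its linearization being an injective $k$-linear map between vector spaces of equal dimension), so $\varphi_G^N(V) = V$. Combining the two containments, $V\subseteq\Lie(G^{\et\vee})$, as desired.

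The main obstacle is the nilpotency sub-claim, which requires translating the scheme-theoretic infinitesimality of $G^\circ(1)$ into the nilpotency of the induced semi-linear operator via the explicit description of the Hasse--Witt map in terms of Frobenius; once that is in place, the remainder is essentially a short finite-dimensional linear-algebra argument in the style of a Fitting decomposition. One minor technical point: over a non-perfect $k$, the step ``set-injective implies bijective'' can fail, and (b) should be read through the bijectivity of the linearization (the natural formulation parallel to Corollary~\ref{cor-etale-GV}), after which the same argument goes through verbatim.
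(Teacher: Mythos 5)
Your proof is correct and follows essentially the same route as the paper: property (a) and (b) for $\Lie(G^{\et\vee})$ are established via compatibility of Hasse--Witt maps with the connected-\'etale sequence and Corollary~\ref{cor-etale-GV}; the nilpotency of $\varphi_{G^\circ}$ (your sub-claim) is derived from the vanishing of an iterated Frobenius on the infinitesimal group $G^\circ(1)$, exactly as in the paper (the paper phrases this at the level of linearizations, $\widetilde{\varphi^n_{G^\circ}}=0$, but this is the same statement); and the final deduction ($\varphi_G^N(V)\subseteq \Lie(G^{\et\vee})$ and $\varphi_G^N(V)=V$ imply the inclusion) matches the paper's $V=\widetilde{\varphi_G^n}(V^{(p^n)})\subset\Lie(G^{\et\vee})$.

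Your ``minor technical point'' about non-perfect fields is not merely cosmetic: the paper's own proof contains the sentence ``the linearization $\widetilde{\varphi^n_G}|_{V^{(p^n)}}$ \dots is injective (hence bijective) for any $n\geq 1$,'' which silently passes from set-theoretic injectivity of $\varphi_G|_V$ to injectivity of the linearization. Over a non-perfect $k$ this implication can fail (for a $\sigma$-semilinear $\varphi:V\to V$, injectivity of $\varphi$ only transports to injectivity of $\widetilde{\varphi}$ when $\sigma$ is surjective). In the paper this causes no harm because the proposition's downstream uses (Corollary~\ref{cor-nilp-HW}, Lemma~\ref{sublemma-1}, etc.) are all over algebraically closed $k$, but you are right that the cleanest formulation of (b) is in terms of bijectivity of the linearization, which is also what Corollary~\ref{cor-etale-GV} actually delivers for $\Lie(G^{\et\vee})$.
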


\begin{proof} It is clear that  $\Lie(G^{\et\vee})$ satisfies property (a). We note that the Verschiebung of  $G^\et(1)$ vanishes; so $G^\et(1)$ is in the category $\GV_{\Spec(k)}$. Since $k$ is a field, \ref{cor-etale-GV} implies that  the restriction of $\varphi_G$ to $\Lie(G^{\et\vee})$, which coincides with $\varphi_{G^\et}$, is injective. This proves that $\Lie(G^{\et\vee})$ verifies (b). Conversely, let $V$ be an arbitrary $k$-subspace of $\Lie(G^\vee)$ with properties (a) and (b). We have to show that $V\subset \Lie(G^{\et\vee})$. Let $\sigma$ be the Frobenius endomorphism of $k$. If $M$ is a $k$-vector space, for each integer $n\geq 1$, we put $M^{(p^n)}=k\otimes_{\sigma^n}M$, \ie we have $1\otimes ax=\sigma^n(a)\otimes x$ in $k\otimes_{\sigma^n}M$. Since  $\varphi_G|_V:V\ra V$ is injective by assumption,  the linearization  $\widetilde{\varphi^n_{G}}|_{V^{(p^n)}}:V^{(p^n)}\ra V$ of $\varphi^n_G|_V$ is injective (hence bijective) for any $n\geq 1$. We have $V=\widetilde{\varphi_G^n}(V^{(p^n)})$.  Since $G^\circ$ is connected, there is an integer $n\geq 1$ such that the $n$-th iterated Frobenius $F^n_{G^\circ(1)}:G^\circ(1)\ra G^\circ(1)^{(p^n)}$ vanishes. Hence by definition, the linearized $n$-iterated Hasse-Witt map $\widetilde{\varphi^n_{G^\circ}}:\Lie(G^{\circ\vee})^{(p^n)}\ra \Lie(G^{\circ\vee})$ is zero. By the compatibility of Hasse-Witt maps,  we have $\widetilde{\varphi_G^n}(\Lie(G^\vee)^{(p^n)})\subset \Lie(G^{\et\vee})$; in particular, we have $V=\widetilde{\varphi^n_G}(V^{(p^n)})\subset \Lie(G^{\et\vee})$. This completes the proof.  
\end{proof}

\begin{cor}\label{cor-nilp-HW} Let $k$ be a field of characteristic $p>0$, $G$ be a BT-group over $k$. Then $G$ is connected if and only if $\varphi_G$ is nilpotent.
\end{cor}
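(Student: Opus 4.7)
The plan is to deduce both implications directly from Proposition \ref{prop-etale-HW} and its proof, together with the standard structural fact that $G^\et$ is étale of dimension zero, whose Serre dual is a multiplicative BT-group whose Lie algebra has rank equal to the height of $G^\et$.

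For the ``only if'' direction, I would essentially extract what is already established inside the proof of Proposition \ref{prop-etale-HW}: connectedness of $G$ means that the iterated Frobenius $F^n_{G(1)}:G(1)\to G(1)^{(p^n)}$ vanishes for some $n\geq 1$, hence by functoriality the linearization $\widetilde{\varphi_G^n}:\Lie(G^\vee)^{(p^n)}\to \Lie(G^\vee)$ is zero. This forces $\varphi_G^n=0$, i.e., $\varphi_G$ is nilpotent.

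For the ``if'' direction, I would use Proposition \ref{prop-etale-HW} applied to the candidate subspace $V=\Lie(G^{\et\vee})\subset \Lie(G^\vee)$. Since $G^\et(1)$ lies in $\GV_{\Spec(k)}$ and is étale, Corollary \ref{cor-etale-GV} says that the linearization of its Hasse-Witt map is an isomorphism; over a field this means $\varphi_{G^\et}$ is injective on $\Lie(G^{\et\vee})$. But by the compatibility of \eqref{decomp-Lie} with Hasse-Witt maps, the restriction of $\varphi_G$ to $\Lie(G^{\et\vee})$ is exactly $\varphi_{G^\et}$. If $\varphi_G$ is assumed nilpotent on $\Lie(G^\vee)$, then its restriction to $\Lie(G^{\et\vee})$ is simultaneously injective and nilpotent, which is possible only when $\Lie(G^{\et\vee})=0$. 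Since $G^\et$ is an étale BT-group, $G^{\et\vee}$ is multiplicative of the same height, and the rank of $\Lie(G^{\et\vee})$ equals the height of $G^\et$; thus $\Lie(G^{\et\vee})=0$ forces $G^\et=0$, so $G=G^\circ$ is connected.

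No serious obstacle is expected: both directions are immediate from results already available in the excerpt, and the only small point to verify carefully is that vanishing of $\Lie(G^{\et\vee})$ implies $G^\et=0$, which follows from the multiplicativity of $G^{\et\vee}$. The argument is two short paragraphs and requires no new construction.
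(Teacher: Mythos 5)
Your proposal is correct and follows essentially the same route as the paper: the ``only if'' direction extracts the nilpotence of $\widetilde{\varphi_G^n}$ from the vanishing of iterated Frobenius on the connected $G(1)$ (which is exactly what is established inside the proof of Proposition \ref{prop-etale-HW}), and the ``if'' direction uses the characterization of $\Lie(G^{\et\vee})$ from that proposition to conclude it must vanish when $\varphi_G$ is nilpotent, hence $G^\et=0$. The only difference from the paper's two-line proof is that you spell out the ``injective plus nilpotent implies zero'' step and the passage from $\Lie(G^{\et\vee})=0$ to $G^\et=0$, which the paper leaves implicit.
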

\begin{proof}
In the proof of the proposition, we have seen that the Hasse-Witt map of the connected part of $G$ is nilpotent. So the ``only if'' part is verified. Conversely, if $\varphi_G$ is nilpotent,  $\Lie(G^{\et\vee})$ is zero by the proposition. Therefore $G$ is connected.
\end{proof}

\begin{defn} Let $S$ be a scheme of characteristic $p>0$,  $G$ be a BT-group over $S$. We say that $G$ is \emph{ordinary}
if there exists an exact sequence of BT-groups over $S$
\begin{equation}\label{decom-ord}
0\ra G^{\mult}\ra G\ra G^{\et}\ra 0,
\end{equation}
such that $G^{\mult}$ is  multiplicative  and $G^{\et}$ is \'etale.
\end{defn}

 We note that when it exists, the exact sequence
\eqref{decom-ord}
 is unique up to a unique isomorphism, because there is no
non-trivial homomorphisms between a multiplicative BT-group and an
\'etale one in characteristic $p>0$. The property of being ordinary is clearly stable under arbitrary
base change and Serre duality. If $S$ is the spectrum of a field of characteristic $p>0$,   $G$ is
ordinary if and only if its connected part $G^{\circ}$ is of
multiplicative type.

\begin{prop}\label{prop-ord} Let $G$ be a BT-group over $S$. The following
conditions are equivalent:

\emph{(a)} $G$ is ordinary over $S$.

\emph{(b)} For every $x\in S$, the fiber $G_x=G\otimes_S \kappa(x)$ is ordinary over $\kappa(x)$.

\emph{(c)} The finite group scheme $\Ker V_G$ is \'etale  over $S$.

\emph{(c')} The finite group  scheme $\Ker F_G$ is of multiplicative type
over $S$.

\emph{(d)} The linearization  of the Hasse-Witt map $\HW_G$ 
is an isomorphism.
\end{prop}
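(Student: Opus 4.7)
The plan is to establish the cycle (a) $\Rightarrow$ (b) $\Rightarrow$ (d) $\Rightarrow$ (c') $\Rightarrow$ (a), and to read off (c) $\Leftrightarrow$ (c') from the same Cartier duality argument used for (d) $\Leftrightarrow$ (c'). The implication (a) $\Rightarrow$ (b) is immediate by base change of the sequence \eqref{decom-ord}.

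For (b) $\Leftrightarrow$ (d), first observe that the linearization $\widetilde{\HW_G}$ is an $\cO_S$-linear morphism between locally free sheaves of the same rank $c$, so it is an isomorphism iff its determinant is a unit in $\cO_S$; this is a fiberwise condition. It therefore suffices to prove (a) $\Leftrightarrow$ (d) over a field $k$. Over $k$, if (d) holds then $\HW_G$ is injective on all of $\Lie(G^\vee)$, so Proposition \ref{prop-etale-HW} yields $\Lie(G^\vee) = \Lie(G^{\et\vee})$; the exact sequence \eqref{decomp-Lie} then forces $\Lie(G^{\circ\vee}) = 0$, hence $G^{\circ\vee}$ is \'etale, hence $G^\circ$ is of multiplicative type, and the connected-\'etale sequence of $G$ is precisely \eqref{decom-ord}. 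Conversely, if (a) holds then the multiplicative summand $G^{\mult}$ contributes nothing to $\Lie(G^\vee)$, so $\Lie(G^\vee) = \Lie(G^{\et\vee})$, and Corollary \ref{cor-etale-GV} applied to $G^{\et}(1)$ shows that $\widetilde{\HW_G}$ (which coincides with the linearization of the Hasse-Witt map of $G^{\et}(1)$) is an isomorphism.

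For (d) $\Leftrightarrow$ (c'), I would use that $\HW_G$ is induced by the Verschiebung of $G^\vee$ (see \ref{Lie-alg}), so $\widetilde{\HW_G}=\Lie(V_{G^\vee}):\Lie(G^\vee)^{(p)}\to\Lie(G^\vee)$. By left-exactness of $\Lie$ on finite flat group schemes, (d) is equivalent to $\Lie(\Ker V_{G^\vee})=0$, \ie to $\Ker V_{G^\vee}$ being \'etale over $S$. Cartier duality applied to the exact sequence of finite flat group schemes
\[
0 \to \Ker F_G \to G(1) \xrightarrow{F_G} \Ker V_G \to 0
\]
(using that $\im(F_G\vert_{G(1)})=\Ker V_G$, since $V_G F_G=0$ on $G(1)$ and the ranks match) gives the identifications $(\Ker F_G)^\vee=\Ker V_{G^\vee}$ and $(\Ker V_G)^\vee=\Ker F_{G^\vee}$. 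Since a finite flat commutative group scheme is of multiplicative type iff its Cartier dual is \'etale, this translates \'etaleness of $\Ker V_{G^\vee}$ into (c'), and likewise translates (c) into the \'etaleness of $(\Ker F_{G^\vee})^\vee$, thereby giving (c) $\Leftrightarrow$ (c').

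The main obstacle is (c') $\Rightarrow$ (a): constructing the global decomposition \eqref{decom-ord}. I would put $G^{\mult}:=\varinjlim_n\Ker F_G^n$ and $G^{\et}:=G/G^{\mult}$. The exact sequences $0\to \Ker F_G \to \Ker F_G^n \to \Ker F_G^{n-1}\to 0$, combined with the stability of finite flat multiplicative-type group schemes under extensions in characteristic $p$, let one propagate (c') inductively to every $\Ker F_G^n$, so that $G^{\mult}$ is a BT-subgroup of $G$ of multiplicative type and height $d$. The Frobenius $F_G^n$ identifies $G^{\et}(n)=G(n)/\Ker F_G^n$ with $\Ker V_G^n\subset G(n)^{(p^n)}$, which by (c) and induction is \'etale; hence $G^{\et}$ is \'etale, realizing \eqref{decom-ord}.
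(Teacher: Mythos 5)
Your outline of the equivalence (a) $\Leftrightarrow$ (b) $\Leftrightarrow$ (d) is correct and is a genuinely different route from the paper's, which instead proves (b) $\Rightarrow$ (c) fiberwise via Lemma \ref{lemma-et-mul} and then gets (c) $\Leftrightarrow$ (d) directly from Lemma \ref{lemma-KerV} and Corollary \ref{cor-etale-GV}; your use of Proposition \ref{prop-etale-HW} for (d) $\Rightarrow$ (a) over a field is a nice variant. However, there are real gaps in the remaining steps.

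For (d) $\Leftrightarrow$ (c'), the chain ``(d) $\Leftrightarrow$ $\Lie(\Ker V_{G^\vee})=0$ $\Leftrightarrow$ $\Ker V_{G^\vee}$ \'etale'' is wrong at both links over a general base $S$. Left-exactness of $\Lie$ gives $\Lie(\Ker V_{G^\vee})=\Ker(\widetilde{\HW_G})$, so vanishing of this says $\widetilde{\HW_G}$ is \emph{injective}, not an isomorphism; an injective endomorphism of a locally free sheaf of rank $c$ need not be surjective over a non-field base (multiplication by $t$ on $k[[t]]$). Likewise, for a finite flat group scheme $H/S$, \'etaleness is equivalent to $\omega_H=0$, not to $\Lie(H)=\cHom_{\cO_S}(\omega_H,\cO_S)=0$: the sheaf $\omega_H$ can be non-zero torsion with vanishing dual. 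The dualization that actually works uses $\omega$ and its right-exactness, which is why Corollary \ref{cor-etale-GV} characterizes \'etaleness of a $\GV_S$-object by \emph{isomorphy}, not injectivity, of the linearized Hasse--Witt map; the paper applies this directly to $\Ker V_G$ via Lemma \ref{lemma-KerV}. Moreover, your Cartier duality argument does not give (c) $\Leftrightarrow$ (c') for a single $G$: the identifications $(\Ker F_G)^\vee=\Ker V_{G^\vee}$ and $(\Ker V_G)^\vee=\Ker F_{G^\vee}$ translate (c) for $G$ into (c') for $G^\vee$ and (c') for $G$ into (c) for $G^\vee$, which are not the same statement, so the reasoning is circular unless you independently prove (c) $\Rightarrow$ (c') (or its converse) for one group. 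The paper does this by reducing to fibers and applying the snake lemma to the Verschiebung of $0\to\Ker F_G\to G(1)\to\Ker V_G\to 0$. This gap propagates into your (c') $\Rightarrow$ (a): at the end you invoke (c) to conclude that the $\Ker V_G^n$ are \'etale, but (c') $\Rightarrow$ (c) has not been established, so the cycle does not close. (A small additional point: the extension sequences carry a Frobenius twist, $0\to\Ker F_G\to\Ker F_G^n\to(\Ker F_G^{n-1})^{(p)}\to 0$, as in \eqref{exseq-F}; harmless for the induction, but it should be stated.)
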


 First, we prove the following lemmas.

\begin{lemma}\label{lemma-et-mul} Let $T$ be a scheme,  $H$ be a commutative group
scheme locally free  of finite type over $T$. Then $H$ is  \'etale (\resp  of
multiplicative type)  over $T$ if and only if, for  every
$x\in T$, the fiber $H\otimes_T\kappa(x)$ is \'etale (\resp  of multiplicative type) over
$\kappa(x)$.
\end{lemma}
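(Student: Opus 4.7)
The plan is to handle the two assertions in succession: first étaleness, then multiplicative type (which we reduce to the étale case via Cartier duality).

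For the étale part, I would start by noting that since $H$ is commutative and locally free of finite type over $T$, the structural morphism $H\to T$ is automatically flat and locally of finite presentation. Thus by the standard characterization, $H/T$ is étale if and only if it is unramified, i.e.\ $\Omega^1_{H/T}=0$. Since $\Omega^1_{H/T}$ is a coherent $\cO_H$-module whose formation commutes with arbitrary base change, and since a coherent sheaf vanishes iff all its fibers vanish, we have $\Omega^1_{H/T}=0$ iff $\Omega^1_{H_x/\kappa(x)}=0$ for every $x\in T$. Combined with the already-established flatness of $H_x\to \Spec\kappa(x)$, this gives the fibral criterion for étaleness: $H$ is étale over $T$ iff each $H_x$ is étale over $\kappa(x)$.

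For the multiplicative type part, I would invoke the fact that a finite locally free commutative group scheme $H$ over $T$ is of multiplicative type if and only if its Cartier dual $H^\vee$ is étale over $T$ (which can be taken as the definition, or extracted from SGA~3). Since Cartier duality commutes with arbitrary base change — that is, $(H^\vee)\otimes_T\kappa(x)\simeq (H\otimes_T\kappa(x))^\vee$ canonically — the fiberwise condition ``$H_x$ is of multiplicative type over $\kappa(x)$'' is equivalent to ``$(H_x)^\vee$ is étale over $\kappa(x)$'', equivalently ``$(H^\vee)_x$ is étale over $\kappa(x)$''. Applying the étale case already proved to $H^\vee$ (which is again commutative and locally free of finite type over $T$), this is equivalent to $H^\vee$ being étale over $T$, hence to $H$ being of multiplicative type over $T$.

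There is no real obstacle here: both halves follow formally from standard facts once one observes that the hypothesis ``locally free of finite type'' supplies flatness and finite presentation for free, so that the only content left in étaleness is the fibral property of being unramified; and Cartier duality then transfers this conclusion to the multiplicative case. If anything, the only point requiring care is the compatibility of Cartier duality with base change, which is what allows the reduction in the second half to be genuinely fiberwise.
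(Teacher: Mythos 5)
Your proposal is correct and follows essentially the same route as the paper: for the étale half, reduce étale to unramified (flatness and finite presentation being automatic for finite locally free morphisms) and use the fibral criterion for unramifiedness; for the multiplicative half, reduce to the étale half by Cartier duality, using that duality commutes with base change. The paper simply cites EGA IV 17.4.2 for the fibral criterion, whereas you unpack it via $\Omega^1_{H/T}$ and Nakayama — note that the vanishing of the coherent sheaf $\Omega^1_{H/T}$ should be tested at the points of $H$ (not of $T$), and then related to $\Omega^1_{H_x/\kappa(x)}$ via the base-change compatibility of Kähler differentials; your phrasing slightly elides this, but the argument is sound once this is made explicit.
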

\begin{proof}We will  consider only the \'etale case; the multiplicative
case follows by duality. Since $H$ is $T$-flat, it is \'etale over
$T$ if and only if it is unramified over $T$. By [EGA IV 17.4.2],
this condition is equivalent to that  $H\otimes_T\kappa(x)$ is unramified over $\kappa(x)$ for every point $x\in T$. Hence  the conclusion follows.
\end{proof}

\begin{lemma}\label{lemma-KerV} Let $G$ be a BT-group over $S$. Then
$\Ker V_G$ is an object of the category $\GV_S$, \ie it is locally
free of finite type over $S$, and its Verschiebung  is  zero. Moreover, we have a canonical isomorphism $(\Ker
V_G)^\vee\simeq \Ker F_{G^\vee}$, which induces an isomorphism of
Lie algebras $\Lie\bigl((\Ker V_G)^\vee\bigr)\simeq \Lie (\Ker
F_{G^\vee})= \Lie (G^\vee)$, and the Hasse-Witt map
\eqref{hw-finite} of $\Ker V_G$ is identified with $\HW_G$
\eqref{BT-HW}.
\end{lemma}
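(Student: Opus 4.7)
The central tool is the short exact sequence
\[
0 \to \Ker F_G \to G(1) \xrightarrow{F_G} \Ker V_G \to 0,
\]
whose exactness comes as follows. Since $G$ is a BT-group, $V_G : G^{(p)} \to G$ is an isogeny (being a factor of multiplication by $p$ via $F_G \circ V_G = p_{G^{(p)}}$), so $\Ker V_G$ is finite locally free over $S$ of rank $p^c$, where $c=\dim G^\vee$. The relation $V_G \circ F_G = p_G$ shows that $F_G(G(1)) \subset \Ker V_G$, inducing a closed immersion $G(1)/\Ker F_G \hookrightarrow \Ker V_G$ of finite locally free $S$-group schemes. Both source and target have rank $p^c$ (using $|G(1)|=p^{c+d}$ and $|\Ker F_G|=p^d$), so the immersion is an isomorphism. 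To see that $V_{\Ker V_G}=0$, I would use functoriality of Verschiebung for the closed immersion $i:\Ker V_G \hookrightarrow G^{(p)}$: since base change commutes with $V$, $i^{(p)}$ identifies $(\Ker V_G)^{(p)}$ with $\Ker V_{G^{(p)}}$ inside $G^{(p^2)}$, hence $V_{G^{(p)}} \circ i^{(p)} = 0$; combined with $i \circ V_{\Ker V_G} = V_{G^{(p)}} \circ i^{(p)}$ and the monomorphicity of $i$, this gives $V_{\Ker V_G}=0$. Thus $\Ker V_G$ lies in $\GV_S$.

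For the Cartier dual, I would apply $(-)^\vee$ to the above sequence, using the Serre-dual identifications $(G(1))^\vee = G^\vee(1)$ and, since Cartier duality commutes with Frobenius twist, $(G^{(p)}(1))^\vee = G^\vee(1)^{(p)}$. The Cartier dual of $F_G : G(1) \to G^{(p)}(1)$ is the Verschiebung $V_{G^\vee} : G^\vee(1)^{(p)} \to G^\vee(1)$, so the factorisation $G(1) \twoheadrightarrow \Ker V_G \hookrightarrow G^{(p)}(1)$ of $F_G$ dualises to a factorisation $G^\vee(1)^{(p)} \twoheadrightarrow (\Ker V_G)^\vee \hookrightarrow G^\vee(1)$ of $V_{G^\vee}$. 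Running the same rank argument with $G$ replaced by the BT-group $G^\vee$, the image of $V_{G^\vee}$ equals $\Ker F_{G^\vee}$, yielding the canonical isomorphism $(\Ker V_G)^\vee \simeq \Ker F_{G^\vee}$ of subgroups of $G^\vee(1)$.

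Finally, applying left-exactness of $\Lie$ to $0 \to \Ker F_{G^\vee} \to G^\vee(1) \xrightarrow{F_{G^\vee}} G^{\vee,(p)}(1)$ and using that $\Lie(F_{G^\vee}) = 0$, we get $\Lie(\Ker F_{G^\vee}) = \Lie(G^\vee(1)) = \Lie(G^\vee)$, giving the identification of Lie algebras via the previous isomorphism. For the Hasse-Witt map, \eqref{hw-finite} characterises $\varphi_{\Ker V_G}$ as the endomorphism of $\Lie((\Ker V_G)^\vee) = \Lie(\Ker F_{G^\vee})$ induced by $V_{\Ker F_{G^\vee}}$; by functoriality of Verschiebung for the closed immersion $\Ker F_{G^\vee} \hookrightarrow G^\vee(1)$, which induces an isomorphism on Lie algebras, this coincides with the action of $V_{G^\vee(1)}$ on $\Lie(G^\vee)$, which is $\HW_G$ by definition. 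The main delicate point will be the precise handling of the rank calculation making $G(1)/\Ker F_G \to \Ker V_G$ an isomorphism, and the careful identification of Cartier duals of $F$ and $V$ under Frobenius twist in the relative setting; once these are in place the remaining steps are essentially formal.
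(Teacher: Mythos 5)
Your proof is correct and follows essentially the same route as the paper: reduce to the exact sequence $0\to\Ker F_G\to G(1)\to\Ker V_G\to 0$, dualize to identify $(\Ker V_G)^\vee$ with $\Ker F_{G^\vee}$, and then match up Lie algebras and Hasse--Witt maps via functoriality. The only cosmetic differences are that you establish $\im(V_{G^\vee(1)})=\Ker F_{G^\vee(1)}$ by a rank count rather than by invoking the long exact $F,V$-sequence for the $p$-torsion of a BT-group, and you verify the Hasse--Witt compatibility through functoriality of Verschiebung under the closed immersion $\Ker F_{G^\vee}\hra G^\vee(1)$, whereas the paper passes through the $\cHom_{S_\fppf}(-,\G_a)$ description of $\Lie(G^\vee)$; both are valid and equivalent in content.
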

\begin{proof} The group scheme $\Ker V_G$ is locally free of finite
type over $S$ (\cite{Il} 1.3(b)), and we have a commutative diagram
\[\xymatrix{(\Ker V_G)^{(p)}\ar[rr]^{V_{\Ker V_G}}\ar@{^(->}[d] &&\Ker V_G\ar@{^(->}[d]\\
(G^{(p)})^{(p)}\ar[rr]^{V_{G^{(p)}}}&&G^{(p)}}\] By the
functoriality of Verschiebung, we have $ V_{G^{(p)}}=( V_{G})^{(p)}$
and $\Ker V_{G^{(p)}}=(\Ker V_G)^{(p)}$. Hence the composition of
the left vertical arrow with $V_{G^{(p)}}$ vanishes, and  the
Verschiebung of $\Ker V_{G}$ is zero.

By  Cartier duality, we have  $(\Ker V_G)^\vee=\Coker
(F_{G^\vee(1)})$. Moreover,  the exact sequence
\[\cdots \ra G^\vee(1)\xra{F_{G^\vee(1)}}\bigl(G^\vee(1)\bigr)^{(p)}\xra{V_{G^\vee(1)}}G^\vee(1)\ra \cdots,\]
  induces a canonical isomorphism
 \begin{equation}\label{isom-Ker-Coker}\Coker
(F_{G^\vee(1)})\xra{\sim} \im (V_{G^\vee(1)})=\Ker
F_{G^\vee(1)}=\Ker F_{G^\vee}.\end{equation} Hence, we deduce that
 \begin{equation}\label{isom-Ker-Coker2}(\Ker
V_G)^\vee\simeq\Coker(F_{G^\vee(1)})\xra{\sim} \Ker
F_{G^\vee}\hookrightarrow G^\vee(1).\end{equation} Since the natural
injection $\Ker F_{G^\vee}\ra G^\vee(1)$ induces an isomorphism of
Lie algebras, we get \begin{equation}\label{isom-Lie}\Lie\bigl((\Ker
V_G)^\vee\bigr) \simeq \Lie (\Ker F_{G^\vee})=\Lie
(G^\vee(1))=\Lie(G^\vee).\end{equation} It remains to prove the
compatibility of the Hasse-Witt maps with \eqref{isom-Lie}. We note
that the dual of the morphism \eqref{isom-Ker-Coker2}
 is the canonical map $F:G(1)\ra \Ker V_G=\im
(F_{G(1)})$ induced by $F_{G(1)}$. Hence by \eqref{dual-Groth}, the
isomorphism \eqref{isom-Lie} is identified with the functorial map
\[\cHom_{S_{\fppf}}(\Ker V_G,\G_a)\ra \cHom_{S_{\fppf}}(G(1),\G_a)\]
induced by $F$, and its compatibility with the Hasse-Witt maps
follows easily from the definition \eqref{hw-finite}.
\end{proof}

\begin{proof}[Proof of \ref{prop-ord}]  (a)$\Rightarrow$(b). Indeed,
the ordinarity of $G$ is stable by base change.

  (b)$\Rightarrow$(c). By Lemma \ref{lemma-et-mul},  it suffices to verify that for every point $x\in S$,
 the fiber $(\Ker V_G)\otimes_S{\kappa(x)}\simeq \Ker V_{G_x}$ is \'etale over
 $\kappa(x)$. Since $G_x$ is assumed to be  ordinary,  its connected part
 $(G_x)^{\circ}$ is multiplicative.  Hence, the Verschiebung of
 $(G_x)^{\circ}$ is an isomorphism, and  $\Ker V_{G_x}$ is canonically isomorphic to
 $ \Ker V_{G_x^\et}\subset (G_x^\et)^{(p)}\simeq (G_x^{(p)})^\et$, so  our assertion follows.

 $(c)\Leftrightarrow(d)$. It follows
immediately from Lemma \ref{lemma-KerV} and Corollary
\ref{cor-etale-GV}.

 (c)$\Leftrightarrow$(c').
  By \ref{lemma-et-mul}, we may assume that $S$ is  the spectrum of a field.
 So the category of commutative
 finite group schemes over $S$ is abelian. We will just prove
 (c)$\Rightarrow$(c'); the converse can be proved by duality.
 We have  a fundamental  short exact
 sequence of finite group schemes
 \begin{equation}\label{exseq-fund-BT}0\ra \Ker F_G\ra G(1)\xra{F} \Ker
 V_G\ra0,
 \end{equation} where $F$ is induced by $F_{G(1)}$,
 That induces a commutative diagram
 \[\xymatrix{0\ar[r]&\bigl(\Ker F_G\bigr)^{(p)}\ar[d]^{V'}\ar[r]&\bigl(G(1)\bigr)^{(p)}\ar[r]^{F^{(p)}}\ar[d]^{V_{G(1)}}
 &\bigl(\Ker V_G\bigr)^{(p)}\ar[r]\ar[d]^{V''}&0\\
 0\ar[r]&\Ker F_G\ar[r]& G(1)\ar[r]^{F}&\Ker V_G\ar[r]&0}\]
 where vertical arrows are the Verschiebung homomorphisms. We have seen
  that  $V''=0$ (\ref{lemma-KerV}).
 Therefore, by the snake lemma, we have a long exact sequence
 \begin{equation}\label{long-ext-seq}0\ra \Ker V'\ra \Ker V_{G(1)}\xra{\alpha}
 \bigl(\Ker V_G\bigr)^{(p)}\ra \Coker V'\ra \Coker V_{G(1)}\xra{\beta}\Ker V_G\ra
 0,
 \end{equation}
where the map $\alpha$  is the Frobenius of $\Ker V_G$ and $\beta$
is the composed isomorphism
$$\Coker (V_{G(1)})\simeq G(1)/\Ker F_{G(1)}\xra{\sim} \im (F_{G(1)})\simeq
\Ker V_G.$$ Then condition (c) is equivalent to that $\alpha$ is an
isomorphism; it implies that $\Ker V'=\Coker V'=0$, \ie the
Verschiebung of $\Ker F_G$ is an isomorphism, and hence  (c').

 (c)$\Rightarrow$(a). For every integer $n>0$, we denote
by $F^n_G$ the composed homomorphism
\[G\xra{F_G}G^{(p)}\xra{F_{G^{(p)}}}\cdots\xra{F_{G^{(p^{n-1})}}}G^{(p^n)},\] and by $V^n_G$
the composed homomorphism
\[G^{(p^n)}\xra{V_{G^{(p^{n-1})}}}G^{(p^{n-1})}\xra{V_{G^{(p^{n-2})}}}\cdots\xra{V_{G}}G;\]
 $F_G^n$ and $V_G^n$ are isogenies of BT-groups. From the relation
$V^n_G\circ F^n_G=p^n$, we deduce an exact sequence
\beq\label{exseq-FV}0\ra \Ker F^n_G\ra G(n)\xra{F^n} \Ker V^n_G\ra
0,\eeq where $F^n$ is induced by $F_{G}^n$.  For $1\leq j<n$, we
have a commutative diagram
\beq\label{diag-V_G} \xymatrix{G^{(p^n)}\ar[rr]^{V^{n-j}_{G^{(p^j)}}}\ar[rd]_{V_G^n}&&G^{(p^j)}\ar[ld]^{V_{G}^j}\\
&G.}\eeq One notices by the functoriality of  Verschiebung that
$\Ker V^{n-j}_{G^{(p^j)}}=(\Ker V^{n-j}_G) ^{(p^j)}$. Since all maps
in \eqref{diag-V_G} are isogenies, we have an exact sequence
\beq\label{exseq-V}0\ra (\Ker V^{n-j}_G)^{(p^j)}\xra{i'_{n-j,n}}
\Ker V^n_G\xra{p_{n,j}} \Ker V^j_G\ra 0. \eeq  Therefore, condition
(c) implies by induction that $\Ker V^n_G$ is an \'etale group
scheme over $S$. Hence the $j$-th iteration of the Frobenius $\Ker
V^{n-j}_G\ra (\Ker V^{n-j}_G)^{(p^j)}$ is  an isomorphism,
and  $\Ker V^{n-j}_G$ is identified with a closed subgroup scheme of $\Ker
V^{n}_G $ by the composed map
\[
i_{n-j,n}:\;\Ker V^{n-j}_G\xra{\sim} (\Ker
V^{n-j}_{G})^{(p^j)}\xra{i'_{n-j,n}}\Ker V^n_G.
\]
We claim that the kernel of the multiplication by $p^{n-j}$ on $\Ker
V^n_G$ is $\Ker V^{n-j}_G$. Indeed, from the relation $p^{n-j}\cdot
\Id_{G^{(p^n)}}=F^{n-j}_{G^{(p^j)}}\circ V^{n-j}_{G^{(p^j)}}$, we
deduce a commutative diagram (without dotted arrows)
\begin{equation}\label{diag-1}
\xymatrix{\Ker
V^n_G\ar[rr]\ar[dd]_-(0.7){p^{n-j}}\ar@{-->}[rd]^{p_{n,j}}&&G^{(p^n)}
\ar[dd]_-(0.7){p^{n-j}}\ar[rd]^{V^{n-j}_{G^{(p^j)}}}\\
&\Ker
V^j_{G}\ar@{-->}[rr]\ar@{-->}[ld]^{i_{j,n}}&&G^{(p^j)}\ar[ld]^{F^{n-j}_{G^{(p^{j})}}}\\
\Ker V_G^n\ar[rr]&&G^{(p^n)}.}
\end{equation}
It follows from \eqref{exseq-V} that the subgroup  $\Ker V^n_G$ of $
G^{(p^n)}$ is sent by $V^{n-j}_{G^{(p^j)}}$ onto $\Ker V^j_G$.
Therefore diagram \eqref{diag-1} remains commutative when completed
by the dotted arrows, hence our claim.
 It follows from the claim that $(\Ker V_G^n)_{n\geq 1}$ constitutes
  an \'etale BT-group over
$S$, denoted by $G^\et$. By duality, we have an exact sequence
\begin{equation}\label{exseq-F}0\ra \Ker
F^j_G\ra \Ker F^n_G\ra (\Ker F^{n-j}_G)^{(p^j)}\ra 0.\end{equation}
Condition (c') implies by induction that $\Ker F^n_G$ is of
multiplicative type. Hence the $j$-th iteration of Verschiebung
$(\Ker F^{n-j}_G)^{(p^j)}\ra \Ker F^{n-j}_G$ is an isomorphism. We
deduce  from \eqref{exseq-F} that $(\Ker F^n_G)_{n \geq 1}$ form a
multiplicative BT-group over $S$ that we denote  by $G^{\mult}$.
Then the exact sequences \eqref{exseq-FV} give a decomposition of
$G$ of the form \eqref{decom-ord}.
\end{proof}

\begin{cor}\label{cor-ord-loc} Let $G$ be a BT-group over $S$, and $S^{\ord}$ be the
locus in $S$ of the points $x\in S$ such that
$G_x=G\otimes_S\kappa(x)$ is ordinary over $\kappa(x)$. Then
$S^{\ord}$ is open in $S$, and the canonical inclusion $S^{\ord}\ra
S$ is affine.
\end{cor}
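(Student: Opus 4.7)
My plan is to realize $S^{\ord}$ as the non-vanishing locus of the determinant of the linearized Hasse-Witt map of $G$, from which openness and the affineness of the inclusion will both follow at once. The key input is the fiberwise criterion \ref{prop-ord}(b)$\Leftrightarrow$(d), combined with the fact that the formation of the Hasse-Witt map and its linearization commutes with arbitrary base change on $S$.

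First I would record that $\Lie(G^\vee)=\Lie(G^\vee(1))$ is locally free of rank $c$ over $\cO_S$ (since $G(1)$ is finite locally free of order $p^{c+d}$), that $\Lie(G^\vee)^{(p)}$ is therefore also locally free of rank $c$, and that the $\cO_S$-linear map $\widetilde{\varphi_G}\colon\Lie(G^\vee)^{(p)}\ra\Lie(G^\vee)$ is compatible with arbitrary base change on $S$. Combined with \ref{prop-ord}, this yields a fiberwise description: a point $x\in S$ lies in $S^{\ord}$ if and only if $\widetilde{\varphi_G}\otimes_{\cO_S}\kappa(x)$ is an isomorphism of $\kappa(x)$-vector spaces of dimension $c$.

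Next, since $\widetilde{\varphi_G}$ is an $\cO_S$-linear morphism between two locally free modules of the same rank $c$, taking its $c$-th exterior power produces a canonical global section
\[
h=\det(\widetilde{\varphi_G})\in\Gamma\bigl(S,\cHom_{\cO_S}(\det\Lie(G^\vee)^{(p)},\det\Lie(G^\vee))\bigr)
\]
of an invertible $\cO_S$-module. By the fiberwise description above, $S^{\ord}$ is exactly the non-vanishing locus of $h$, which is an open subset of $S$; this handles the openness assertion.

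Finally, for the affineness of $S^{\ord}\hra S$, I work locally on $S$: restrict to an affine open $U=\Spec(A)\subset S$ over which the invertible sheaf above is trivial. Then $h|_U$ corresponds to a single element $f\in A$, and $S^{\ord}\cap U=\Spec(A_f)$ is the principal open subscheme defined by $f$, which is visibly affine over $U$. Since affineness of a morphism is local on the target, the global inclusion $S^{\ord}\hra S$ is affine. There is no substantial obstacle here: once \ref{prop-ord}(d) is available, the entire argument is a formality about determinants of $\cO_S$-linear maps between locally free modules of equal rank, and the only bookkeeping point is the base-change compatibility of the linearized Hasse-Witt map.
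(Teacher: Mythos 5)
Your proof is correct and is exactly the argument the paper leaves implicit: the corollary is stated without an explicit proof immediately after Proposition \ref{prop-ord}, and the intended route is precisely criterion (b)$\Leftrightarrow$(d) of that proposition, base-change compatibility of $\widetilde{\varphi_G}$, and the observation that $S^{\ord}$ is the non-vanishing locus of $\det(\widetilde{\varphi_G})$, which is locally a principal open. Nothing to add.
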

The open subscheme $S^{\ord}$ of $S$ is called the \emph{ordinary
locus }of $G$.

\section{Preliminaries on Dieudonn\'e Theory and Deformation Theory}

\subsection{}\label{pre-Dieud} We will use freely the conventions of \ref{Notations}. Let $S$ be a scheme of characteristic $p>0$, $G$ be  a Barsotti-Tate group  over $S$, and $\M(G)$ be the coherent $\cO_S$-module obtained by evaluating the (contravariant) Dieudonn\'e crystal of $G$ at the trivial divided power immersion $S\hra S$. Recall that  $\M(G)$ is an $\cO_S$-module locally free of finite type satisfying the following properties:

(i)  Let  $F_M:\M(G)^{(p)}\ra \M(G)$ and $V_M:\M(G)\ra \M(G)^{(p)}$ be the $\cO_S$-linear maps induced respectively by the Frobenius and the Verschiebung of $G$. We have the following exact sequence:
\[\cdots\ra \M(G)^{(p)}\xra{F_M}\M(G)\xra{V_M}\M(G)^{(p)}\ra\cdots.\]

(ii) There is a connection $\nabla: \M(G)\ra \M(G)\otimes_{\cO_S}\Omega^1_{S/\F_p}$ for which $F_M$ and $V_M$ are   horizontal morphisms.

(iii) We have two canonical filtrations by $\cO_S$-modules on $\M(G)$:
\begin{equation}\label{filt-Hodge}
0\ra \omega_G\ra \M(G)\ra \Lie(G^\vee)\ra 0,
\end{equation}
called the \emph{Hodge filtration} on $\M(G)$, and 
\begin{equation}\label{filt-cong}
0\ra \Lie(G^\vee)^{(p)}\xra{\phi_G} \M(G)\ra\omega^{(p)}_{G}\ra 0,
\end{equation}
called the \emph{conjugate filtration} on $\M(G)$. Moreover, we have the following commutative diagram (cf. \cite[2.3.2 and 2.3.4]{Kz})
\begin{equation}\label{diag-Dieud}
\xymatrix{&0\ar[d]&&0\ar[d]&&0\ar[d]&\\
&\omega^{(p)}_G\ar[d]&&\omega_G\ar[d]\ar[rr]^{\psi_G}&&\omega_G^{(p)}\ar[d]&\\
\ar[r]&\M(G)^{(p)}\ar[rr]^{F_M}\ar[d]&&\M(G)\ar[d]\ar[rr]^{V_M}\ar@^{->>}[urr]&&\M(G)^{(p)}\ar[d]\ar[r]&,\\
&\Lie(G^\vee)^{(p)}\ar[d]\ar@^{(->}[urr]^{\phi_G}\ar[rr]^{\widetilde{\varphi_G}}&&\Lie(G^\vee)\ar[d]&&\Lie(G^\vee)^{(p)}\ar[d]&\\
&0&&0&&0&}
\end{equation}
where  the columns are the Hodge filtrations and the anti-diagonal is the conjugate filtration. By functoriality, we see easily that  $\widetilde{\varphi_G}$ above is nothing but the linearization of the Hasse-Witt map $\varphi_G$ \eqref{BT-HW}, and the morphism $\psi_G^*:\Lie(G)^{(p)}\ra \Lie(G)$, which is obtained by applying the functor $\cHom_{\cO_S}(\_,\cO_S)$ to $\psi_G$, is identified  with the linearization $\widetilde{\varphi_{G^\vee}}$ of  $\varphi_{G^\vee}$.

The formation of these structures on $\M(G)$ commutes with arbitrary base changes of $S$. 
In the sequel, we  will use $(\M(G),F_M,\nabla)$ to emphasize these structures on $\M(G)$.

\subsection{}\label{versal} In the reminder of this section, $k$ will denote an algebraically closed field of characteristic $p>0$. Let  $S$ be a  scheme formally smooth over $k$ such that $\Omega^1_{S/\F_p}=\Omega^1_{S/k}$ is an  $\cO_S$-module locally free of finite type, \emph{e.g.} $S=\Spec(A)$ with $A$ a formally smooth $k$-algebra with a finite $p$-basis over $k$. Let $G$ be a BT-group over $S$. We put $\KS$ to be the composed  morphism 
\begin{equation}\label{morph-KS}
\KS: \omega_{G}\ra \M(G)\xra{\nabla}\M(G)\otimes_{\cO_S}\Omega^1_{S/k}\xra{pr}\Lie(G^\vee)\otimes_{\cO_{S}}\Omega^1_{S/k}
\end{equation}
which is $\cO_{S}$-linear. We put  $\cT_{S/k}=\cHom_{\cO_S}(\Omega^1_{S/k},\cO_S)$, and define the \emph{Kodaira-Spencer map} of $G$ 
\begin{equation}\label{Kod-map}
\Kod: \cT_{S/k}\ra \cHom_{\cO_S}(\omega_G,\Lie(G^\vee))
\end{equation}
 to be the morphism induced by $\KS$. We say that $G$ is \emph{versal} if $\Kod$ is surjective.

\subsection{}\label{versal-formal} Let $r$ be an integer $\geq 1$,  $R=k[[t_1,\cdots,t_r]]$, $\m$ be the maximal ideal of $R$. We put $\cS=\Spf(R)$, $S=\Spec(R)$, and for each integer $n\geq 0$, $S_n=\Spec (R/\m^{n+1})$.  By a { BT-group}
$\cG$ over the formal scheme $\cS$, we mean a sequence
 of BT-groups $(G_n)_{n\geq 0}$ over $(S_n)_{n\geq 0}$ equipped with
isomorphisms $G_{n+1}\times_{S_{n+1}}S_n\simeq G_n$.

According to (\cite{dJ} 2.4.4), \emph{the functor $G\mapsto
(G\times_S S_n)_{n\geq0}$ induces an equivalence of categories
between the category of BT-groups over $S$ and the category of
BT-groups over $\cS$.} For a BT-group  $\cG$  over $\cS$, the
corresponding BT-group $G$ over $S$ is called the
\emph{algebraization }of $\cG$. We say that $\cG$ is \emph{versal} over $\cS$, if its algebraization $G$ is versal over $S$. Since $S$ is local, by Nakayama's Lemma,  $\cG$ or $G$ is versal if and only if the reduction of $\Kod$ modulo the maximal ideal
\begin{equation}\label{Kod-0}\Kod_0:\cT_{S/k}\otimes_{\cO_S} k\lra \Hom_{k}(\omega_{G_0},\Lie(G^\vee_0))\end{equation}
is surjective.

\subsection{} We recall briefly  the deformation theory  of a BT-group.
Let $\AL_k$ be the category of local artinian $k$-algebras with
residue field $k$.  We notice that all
morphisms  of $\AL_k$ are  local. A morphism  $A'\ra A$ in $\AL_k$  is
called a \emph{small extension}, if it is surjective and its kernel
$I$ satisfies $I\cdot \m_{A'}=0$, where $\m_{A'}$ is  the maximal ideal of $A'$. 

Let $G_0$ be a BT-group over $k$, and $A$  an object of $\AL_k$.  A
deformation of $G_0$ over $A$ is a pair $(G,\phi)$, where $G$ is a
BT-group over $\Spec (A)$ and $\phi$ is an isomorphism
$\phi:G\otimes_Ak\xra{\sim} G_0$. When there is no risk of
confusions, we will denote a deformation $(G,\phi)$ simply by $G$. Two deformations $(G,\phi)$
and $(G',\phi')$ over $A$ are isomorphic if there exists an
isomorphism of BT-groups $\psi:G\xra{\sim} G'$ over $A$ such that
$\phi=\phi'\circ(\psi\otimes_A k)$. Let's denote by $\D$ the functor
which associates with each object $A$ of $\AL_k$ the set of
isomorphic classes of deformations of $G_0$ over $A$. If  $f:A\ra
B$ is a morphism of $\AL_k$, then the map $\D(f):\D(A)\ra \D(B)$ is
given by extension of scalars. We call $\D$ the \emph{deformation
functor} of $G_0$ over $\AL_k$.

\begin{prop}[\cite{Il} 4.8]\label{prop-deform}  Let $G_0$ be a BT-group over $k$ of dimension $d$ and height
$c+d$, $\D$ be the deformation functor of $G_0$ over
$\AL_k$.

\emph{(i)} Let $A'\ra A$ be a small extension in $\AL_k$ with ideal $I$, $x=(G,\phi)$ be an element in  $\D(A)$, $\D_x(A')$ be the subset of $\D(A')$ with image $x$ in $\D(A)$.  Then the set $\D_x(A')$ is  a nonempty homogenous space under the group  $\Hom_k(\omega_{G_0},\Lie(G_0^\vee))\otimes_k I$.

\emph{(ii)} The functor $\D$ is pro-representable by a formally
smooth formal scheme $\cS$ over $k$ of relative dimension $cd$,
\ie $\cS=\Spf(R)$ with $R\simeq k[[(t_{ij})_{1\leq i \leq c, 1\leq
j\leq d}]]$, and there exists a unique deformation $(\cG,\psi)$ of $G_0$ over $\cS$ such that,
for any object $A$ of $\AL_k$ and any deformation $(G,\phi)$ of
$G_0$ over $A$, there is a unique homomorphism of local $k$-algebras
$\varphi:R\ra A$ with $(G,\phi)=\D(\varphi)(\cG,\psi)$.

\emph{(iii)} Let $\cT_{\cS/k}(0)=\cT_{\cS/k}\otimes_{\cO_{\cS}} k$  be the
tangent space of $\cS$ at its unique closed point,
\begin{equation*}\Kod_0: \cT_{\cS/k}(0)\lra \Hom_k(\omega_{G_0},\Lie(G_0^\vee))
\end{equation*} be the Kodaira-Spencer map  of $\cG$
evaluated at the closed point of $\cS$. Then $\Kod_0$ is bijective,
and it can  be described
 as follows. For an element $f\in \cT_{\cS/k}(0)$, \ie a homomorphism of
 local $k$-algebras
$f:R\ra k[\epsilon]/\epsilon^2$, $\Kod_0(f)$ is the difference of
deformations \[[\cG\otimes_R(k[\epsilon]/\epsilon^2)]-[G_0\otimes_{
k} (k[\epsilon]/\epsilon^2)],\] which is a well-defined element in $
\Hom_k(\omega_{G_0},\Lie(G^\vee_0))$ by \emph{(i)}.
\end{prop}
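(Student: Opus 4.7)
The plan is to deduce all three parts from Grothendieck-Messing crystalline deformation theory. Let $A' \twoheadrightarrow A$ be a small extension in $\AL_k$ with square-zero kernel $I$; since $A'$ has characteristic $p$ and $I^2=0$, the trivial divided powers on $I$ make $\Spec(A) \hookrightarrow \Spec(A')$ a nilpotent PD-thickening. Evaluating the contravariant Dieudonn\'e crystal of $G_0$ at this thickening gives a locally free $A'$-module $\M_{A'}$ whose reduction modulo $I$ is $\M(G)$, and Grothendieck-Messing's theorem identifies the set of lifts of $(G,\phi)$ to deformations over $A'$ with the set of direct-summand lifts $\omega' \subset \M_{A'}$ of the Hodge filtration $\omega_G \subset \M(G)$. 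Such lifts exist (locally free direct summands always lift across a nilpotent thickening) and form a torsor under $\Hom_A(\omega_G,\Lie(G^\vee))\otimes_A I$. The condition $I\cdot \m_A=0$ identifies this group canonically with $\Hom_k(\omega_{G_0},\Lie(G_0^\vee))\otimes_k I$, proving (i).

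For (ii), I would apply Schlessinger's criterion. The fibered-product conditions (H1)-(H3) follow from the functoriality of the crystal evaluation and the linearity of the torsor structure from (i). The tangent space is computed by specializing (i) to $A'=k[\epsilon]/\epsilon^2$, $A=k$, $I=(\epsilon)$, yielding $\D(k[\epsilon]/\epsilon^2)\simeq \Hom_k(\omega_{G_0},\Lie(G_0^\vee))$, which has $k$-dimension $cd$. The non-emptiness of $\D_x(A')$ for every small extension (\ie the absolute unobstructedness of $\D$) promotes the pro-representing ring from merely complete Noetherian local to a formal power series ring $R\simeq k[[(t_{ij})_{1\leq i\leq c,\,1\leq j\leq d}]]$ of the expected dimension, and the universal deformation $\cG$ is then given by Yoneda.

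For (iii), both candidate descriptions of $\Kod_0$ measure the infinitesimal motion of the Hodge filtration along a tangent vector, phrased in different language. On the one hand, for $f\in \cT_{\cS/k}(0)$ corresponding to a homomorphism $R\to k[\epsilon]/\epsilon^2$, the pull-backs $f^*\cG$ and $G_0\otimes_k k[\epsilon]/\epsilon^2$ are two deformations of $G_0$ over $k[\epsilon]/\epsilon^2$, so by (i) their difference is a canonical element of $\Hom_k(\omega_{G_0},\Lie(G_0^\vee))$. On the other hand, $\Kod_0$ from \eqref{Kod-map} is obtained by evaluating the connection $\nabla$ on $\M(\cG)$ at the closed point and composing $\omega_{G_0}\hookrightarrow \M(G_0)\xra{\nabla}\M(G_0)\otimes_k \cT^\vee_{\cS/k}(0)\to\Lie(G_0^\vee)\otimes_k \cT^\vee_{\cS/k}(0)$ and dualizing. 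Unwinding the Grothendieck-Messing identification shows that the sub-bundle in $\M_{k[\epsilon]/\epsilon^2}$ corresponding to $f^*\cG$ differs from the trivial lift by exactly the connection-based Kodaira-Spencer class, establishing both the bijectivity of $\Kod_0$ and the explicit formula.

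The main obstacle is the identification in (iii): matching the connection-based Kodaira-Spencer map \eqref{Kod-map} with the set-theoretic difference of deformations in the Grothendieck-Messing torsor. This requires carefully tracking how the crystal evaluation extends from $k$ to $k[\epsilon]/\epsilon^2$ and verifying that the parallel transport encoded by $\nabla$ agrees with the canonical identification of the two evaluations modulo the kernel ideal $(\epsilon)$. Parts (i) and (ii) are essentially formal consequences of Grothendieck-Messing plus Schlessinger; part (iii) is where the crystalline bookkeeping must be done with care.
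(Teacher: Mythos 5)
This proposition is quoted in the paper from Illusie \cite{Il} without proof, and your crystalline plan --- Grothendieck--Messing for the torsor structure in (i), Schlessinger's criterion plus unobstructedness for (ii), and the identification of the connection-based Kodaira--Spencer map with the difference of deformations in the Grothendieck--Messing torsor for (iii) --- is exactly the standard argument behind the cited result. The only point to watch is that pro-representability (rather than merely the existence of a hull) requires Schlessinger's condition (H4), which is supplied by the full torsor statement of (i) and not just by (H1)--(H3); with that noted, your outline is sound and matches the reference.
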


\begin{rem}\label{rem-basis} Let $(e_j)_{1\leq j \leq d}$ be a basis of $\omega_{G_0}$,
$(f_i)_{1\leq i\leq c}$ be a basis of $\Lie(G_0^\vee)$. In view of
\ref{prop-deform}(iii), we can choose a system of  parameters
$(t_{ij})_{1\leq i \leq c, 1\leq j\leq d}$ of $\cS$ such that
$$\Kod_0(\frac{\partial\empty}{\partial t_{ij}})=e_j^*\otimes f_i,$$
where $(e_j^*)_{1\leq j\leq d}$ is the dual basis of $(e_j)_{1\leq
j\leq d}$. Moreover, if $\m$ is the maximal ideal of $R$, the
parameters $t_{ij}$ are determined uniquely modulo $\m^2$.
\end{rem}

\begin{cor}[\textbf{Algebraization of the universal deformation}]\label{cor-alg-univ}
 The assumptions being those of
$\eqref{prop-deform}$, we put moreover $\bS=\Spec(R)$ and $\bG$ the
algebraization of the universal formal deformation $\cG$. Then the BT-group $\bG$ is  versal over $\bS$, and satisfies the following universal property: Let
$A$ be a noetherian complete local $k$-algebra with residue field
$k$, $G$ be a BT-group over $A$ endowed with an isomorphism
$G\otimes_Ak\simeq G_0$. Then there exists a unique continuous
homomorphism of local $k$-algebras $\varphi:R\ra A$ such that
$G\simeq \bG\otimes_RA$.
\end{cor}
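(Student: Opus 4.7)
The plan has two independent parts, both resting on Proposition \ref{prop-deform} and the algebraization equivalence \cite{dJ} 2.4.4 recalled in \ref{versal-formal}.

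For versality, by \ref{versal-formal} and Nakayama's lemma, versality of $\bG$ over the local scheme $\bS = \Spec(R)$ reduces to the surjectivity of the Kodaira-Spencer map $\Kod_0$ of \eqref{Kod-0}. The Dieudonn\'e module, the Hodge filtration, and the connection $\nabla$ from \ref{pre-Dieud} all commute with base change, so the $\Kod_0$ attached to $\bG$ at the closed point of $\bS$ coincides with the one attached to the universal formal deformation $\cG$, which is bijective by \ref{prop-deform}(iii). This yields the versality of $\bG$.

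For the universal property, given $A$ noetherian complete local with residue field $k$ and $G$ a BT-group over $\Spec(A)$ trivializing to $G_0$, I would set $A_n = A/\m_A^{n+1}$ and $G_n = G \otimes_A A_n$, and apply Proposition \ref{prop-deform}(ii) to obtain, for each $n\geq 0$, a unique local homomorphism $\varphi_n : R \to A_n$ with $[G_n] = \D(\varphi_n)(\cG, \psi)$ in $\D(A_n)$. Uniqueness forces the $\varphi_n$ to be compatible with the reductions $A_{n+1}\twoheadrightarrow A_n$, so they glue to a unique continuous local homomorphism $\varphi : R \to A$; the uniqueness of $\varphi$ in the statement then follows from that of each $\varphi_n$, since any other $\varphi'$ satisfying $G\simeq \bG\otimes_RA$ must reduce modulo each $\m_A^{n+1}$ to a map realizing $[G_n]$.

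To conclude I would exhibit an isomorphism $G \simeq \bG \otimes_R A$ via the equivalence \cite{dJ} 2.4.4 applied to the complete local ring $A$, which identifies BT-groups over $\Spec(A)$ with formal BT-groups over $\Spf(A)$. By the very choice of $\varphi$, the formal BT-group attached to $\bG \otimes_R A$ is the system $(\varphi_n^* \cG)_n$, which level-wise represents the same class as $G_n$ in $\D(A_n)$. The main technical obstacle is to upgrade this level-wise equality of deformation classes into an actual compatible system of isomorphisms of formal BT-groups; I would achieve this by lifting isomorphisms step by step from $A_n$ to $A_{n+1}$, controlling the lifts through the torsor structure provided by Proposition \ref{prop-deform}(i). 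Once such a compatible system of isomorphisms is in hand, \cite{dJ} 2.4.4 converts it into the desired algebraic isomorphism $G \simeq \bG \otimes_R A$.
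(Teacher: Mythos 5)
Your proof follows the same overall approach as the paper's: versality is read off from the surjectivity (in fact bijectivity) of $\Kod_0$ supplied by \ref{prop-deform}(iii) and the remark at the end of \ref{versal-formal}, and the universal property is obtained by assembling the level-wise maps $\varphi_n:R\to A_n$ into their projective limit and invoking \cite{dJ} 2.4.4. The paper's own proof is exactly this, stated in two sentences; you have fleshed it out.

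The one place where you go beyond the paper is in flagging the passage from ``for each $n$ there is some isomorphism $G_n\simeq\bG\otimes_R A_n$ of deformations'' to ``there is a compatible system of such isomorphisms.'' This is a genuine point that the paper leaves implicit, and it is good to notice it. However, the tool you propose -- the torsor structure from \ref{prop-deform}(i) -- is not the right one for this. That torsor describes the \emph{set of isomorphism classes} of lifts of a given deformation class along a small extension; it says nothing about whether a chosen isomorphism between two lifts can be made to restrict to a chosen isomorphism one level down. What actually closes the gap is \emph{rigidity} of homomorphisms of BT-groups over nilpotent thickenings (Drinfeld; see also Katz, \emph{Serre--Tate local moduli}, Lemma 1.1.3): for $R$ noetherian and $I\subset R$ nilpotent, $\Hom_R(G,H)\to \Hom_{R/I}(G_{R/I},H_{R/I})$ is injective. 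Applied to $A_n\to k$, this says an automorphism of a deformation over $A_n$ that reduces to the identity on $G_0$ \emph{is} the identity. Consequently, if $\theta_n$ and $\theta_{n+1}$ are any isomorphisms of deformations at the two levels (both reducing to the trivialization over $k$), then $\theta_{n+1}|_{A_n}\circ\theta_n^{-1}$ is such an automorphism and hence trivial; the level-wise isomorphisms are \emph{automatically} compatible, with no step-by-step correction needed. You should replace the appeal to the torsor structure by this rigidity argument; otherwise the argument is sound and faithful to the paper.
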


\begin{proof} By the last remark of \ref{versal-formal}, $\bG$ is clearly versal. It remains to prove that it satisfies the universal property in the corollary. Let $G$ be a deformation of $G_0$ over a
noetherian complete local $k$-algebra $A$ with residue field $k$. We
denote by $\m_A$ the maximal ideal of $A$, and put
$A_n=A/\m_A^{n+1}$ for each integer $n\geq0$. Then by
\ref{prop-deform}(b), there exists a unique local homomorphism
$\varphi_n:R\ra A_n$ such that $G\otimes A_n\simeq \bG\otimes_RA_n$.
The $\varphi_n$'s form a projective system $(\varphi_n)_{n\geq0}$,
whose projective limit $\varphi:R\ra A$ answers the question.
\end{proof}

\begin{defn}\label{defn-moduli}
The notations are those of \eqref{cor-alg-univ}. We  call $\bS$ the
\emph{local moduli  in characteristic $p$} of $G_0$, and $\bG$ the
\emph{universal deformation of $G_0$ in characteristic $p$}. 
\end{defn}
If there is no confusions, we will
omit ``in characteristic $p$'' for short.

\subsection{} Let $G$ be a BT-group over $k$, $G^\circ$ be its connected part, and $G^\et$ be its \'etale part. Let $r$ be the height of $G^\et$. Then we have $G^\et\simeq (\Q_p/\Z_p)^r$, since $k$ is algebraically closed.    Let $\D_{G}$ (\resp $\D_{G^{\circ}}$) be the deformation functor   of $G$ (\resp $G^\circ$) over $\AL_k$. If $A$ is an object in $\AL_k$ and $\cG$ is a deformation  of $G$ (\resp $G^\circ$) over $A$, we denote by $[\cG]$ its isomorphic class in $\D_{G}(A)$ (\resp in $\D_{G^\circ(A)}$).

\begin{prop}\label{prop-def-surj} The assumptions are as above, let 
$\Theta: \D_{G}\ra \D_{G^{\circ}}$ be the morphism of functors that maps a deformation of $G$ 
to its connected component. 

 \emph{(i)} The morphism $\Theta$ is formally smooth of relative dimension $r$.

\emph{(ii)} Let $A$ be an object of $\AL_k$, and $\cG^\circ$ be a deformation of $G^\circ$ over $A$. Then the subset  $\Theta_{A}^{-1}([\cG^{\circ}])$ of $\D_{G}(A)$ is canonically identified with $\Ext^1_{A}(\Q_p/\Z_p,\cG^\circ)^r$, where $\Ext^1_A$ means the group of extensions in the category of abelian $\fppf$-sheaves on $\Spec(A)$.
\end{prop}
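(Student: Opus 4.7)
My plan is to prove part (ii) first, since it gives a direct description of the fibres of $\Theta$; then part (i) follows by combining Proposition \ref{prop-deform}(ii) with a tangent-space computation at the closed point.

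For (ii), the key input is the canonical connected-\'etale decomposition of a BT-group over a henselian local ring. Each $A\in\AL_k$ is henselian local with algebraically closed residue field, so any deformation $\cG$ of $G$ over $A$ fits into a canonical exact sequence $0\to\cG^\circ\to\cG\to\cG^\et\to 0$ reducing modulo $\m_A$ to the connected-\'etale sequence of $G$. Since the category of \'etale BT-groups over $\Spec A$ is equivalent, by henselianness, to that over $\Spec k$, and $G^\et\simeq(\Q_p/\Z_p)^r$ over the algebraically closed field $k$, the \'etale part $\cG^\et$ is canonically identified with the trivial lift $(\Q_p/\Z_p)^r$ over $A$. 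Sending $\cG$ to its extension class then defines
\[
\Theta_A^{-1}([\cG^\circ])\;\longrightarrow\;
\Ext^1_A((\Q_p/\Z_p)^r,\cG^\circ)
=\Ext^1_A(\Q_p/\Z_p,\cG^\circ)^r,
\]
whose inverse attaches to a class the corresponding extension BT-group, equipped with the deformation structure coming from a once-chosen splitting of the connected-\'etale sequence of $G$ over $k$. Injectivity is the observation that a morphism of extensions which is the identity on kernel and quotient is an isomorphism of the middle terms, hence of deformations; surjectivity uses $\Ext^1_k(\Q_p/\Z_p,G^\circ)=0$ --- the connected-\'etale sequence splits over the perfect field $k$ --- so reducing an arbitrary class modulo $\m_A$ imposes no obstruction and the associated BT-group is canonically a deformation of $G$.

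For (i), Proposition \ref{prop-deform}(ii) represents $\D_G$ and $\D_{G^\circ}$ by formally smooth formal power-series schemes over $k$, so $\Theta$ corresponds to a local $k$-algebra map between two such rings; formal smoothness of such a map is equivalent to surjectivity of the induced map on tangent spaces, and its relative dimension equals the kernel dimension. By Proposition \ref{prop-deform}(iii) and $\omega_G=\omega_{G^\circ}$ (since $G^\et$ is \'etale), the differential of $\Theta$ is the natural map $\Hom_k(\omega_{G^\circ},\Lie(G^\vee))\to\Hom_k(\omega_{G^\circ},\Lie(G^{\circ\vee}))$ induced by the quotient $\Lie(G^\vee)\twoheadrightarrow\Lie(G^{\circ\vee})$ with kernel $\Lie(G^{\et\vee})$; it is surjective, with kernel $\Hom_k(\omega_{G^\circ},\Lie(G^{\et\vee}))$, matching the tangent space of the fibre as computed by (ii). The principal technical subtlety sits in (ii), where one must ensure that the BT-group built from an arbitrary extension class over $A$ carries a canonical deformation structure, i.e.\ a canonical isomorphism with $G$ after reduction modulo $\m_A$; this is exactly what the vanishing of $\Ext^1_k(\Q_p/\Z_p,G^\circ)$ over the perfect field $k$ delivers.
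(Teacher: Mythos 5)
Your proof is correct and follows essentially the same route as the paper: for (ii) you use the connected--\'etale sequence over the henselian ring $A$ and the unique lifting of the trivialization $G^{\et}\simeq(\Q_p/\Z_p)^r$, and for (i) you reduce to the tangent map at the closed point via the Kodaira--Spencer isomorphism from Prop.~\ref{prop-deform}(iii) and the identification $\omega_G\simeq\omega_{G^\circ}$. One small improvement: the splitting of the connected--\'etale sequence of $G$ over $k$ is in fact unique (not merely ``once-chosen''), since $\Hom_k(G^\et,G^\circ)=0$; together with $\Ext^1_k(\Q_p/\Z_p,G^\circ)=0$ this makes the deformation structure on the middle term of an arbitrary extension class canonical, which is the point the paper leaves implicit and which you correctly identify as the key technical detail.
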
 

\begin{proof} (i) Since $\D_{G}$ and $\D_{G^\circ}$ are both
  pro-representable by a noetherian local complete $k$-algebra  and
  formally smooth over $k$ (\ref{prop-deform}),  by a formal
  completion version of [EGA $\mathrm{IV} 17.11.1(d)$], we only need to check that the tangent map $$\Theta_{k[\epsilon]/\epsilon^2}:\D_{G}(k[\epsilon]/\epsilon^2)\ra \D_{G^{\circ}}(k[\epsilon]/\epsilon^2)$$
 is surjective with kernel of dimension $r$ over $k$. By
  \ref{prop-deform}(iii), $\D_{G}(k[\epsilon]/\epsilon^2)$
 (\resp $\D_{G^\circ}(k[\epsilon]/\epsilon^2)$) is
 isomorphic to $\Hom_{k}(\omega_{G},\Lie(G^\vee))$
 (\resp
 $\Hom_{k}(\omega_{G^{\circ}},\Lie(G^{\circ\vee}))$)
 by the Kodaira-Spencer morphism. In view of the canonical isomorphism $\omega_{G}\simeq \omega_{G^{\circ}}$,   $\Theta_{k[\epsilon]/\epsilon^2}$ corresponds  to the map
\[\Theta'_{k[\epsilon]/\epsilon^2}: \Hom_{k}(\omega_{G},\Lie(G^\vee))\ra \Hom_{k}(\omega_{G},\Lie(G^{\circ\vee}))\]
induced by the canonical surjection $\Lie(G^{\vee})\ra \Lie(G^{\circ\vee})$. It is clear that $\Theta'_{k[\epsilon]/\epsilon^2}$ is surjective of kernel $\Hom_{k}(\omega_{G},\Lie(G^{\et\vee}))$, which has dimension $r$ over $k$.

(ii) Since $G^{\et}$ is isomorphic to $(\Q_p/\Z_p)^r$, every element in $\Ext^1_{A}(\Q_p/\Z_p,\cG^{\circ})^r$ defines clearly an element of $\D_{G}(A)$  with image $[\cG^\circ]$ in $\D_{G^{\circ}}(A)$. Conversely, for any $\cG\in \D_{G}(A)$ with connected component isomorphic to $\cG^{\circ}$, the isomorphism $G^{\et}\simeq (\Q_p/\Z_p)^r$ lifts uniquely to an isomorphism $\cG^\et\simeq (\Q_p/\Z_p)^r$ because $A$ is henselian. The canonical exact sequence $0\ra \cG^{\circ}\ra \cG\ra \cG^{\et}\ra 0$ shows that $\cG$ comes from an element of $\Ext^1_{A}(\Q_p/\Z_p,\cG^{\circ})^r$.

\end{proof}

\section{HW-cyclic Barsotti-Tate Groups}

\begin{defn}\label{defn-cyclic} Let $S$ be a scheme of characteristic
  $p>0$, $G$ be a BT-group over $S$ such that  $c=\dim(G^\vee)$ is constant. 
We say that $G$ is \emph{HW-cyclic}, if $c\geq 1$ and there exists an element $v\in \Gamma(S,\Lie(G^\vee))$ such that 
\[v, \HW_G(v), \cdots, \HW_G^{c-1}(v)\]
generate $\Lie(G^\vee)$ as an $\cO_S$-module, where $\varphi_G$ is the Hasse-Witt map \eqref{BT-HW} of $G$.
\end{defn}

\begin{rem}
 It is clear that a BT-group $G$ over $S$ is HW-cyclic, if and only if $\Lie(G^\vee)$ is free over $\cO_S$ and there exists a  basis of $\Lie(G^\vee)$ over $\cO_S$ under which  
$\varphi_{{G}}$ is expressed  by a matrix of the form
\begin{equation}\label{HW-matrix}
\begin{pmatrix}0 &0 &\cdots &0 &-a_1\\
1 &0 &\cdots &0 &-a_2\\
0 &1 &\cdots &0 &-a_3\\
\vdots &&\ddots &&\vdots\\
0&0&\cdots&1&-a_{c}\end{pmatrix},
\end{equation}
where $a_i\in \Gamma(S,\cO_S)$ for $1\leq i\leq c$.

\end{rem} 

\begin{lemma}\label{lemma-cyclic} Let $R$ be a local ring of characteristic $p>0$, $k$ be its residue field. 

\emph{(i)} A BT-group $G$ over $R$ is HW-cyclic if and only if so is $G\otimes k$. 

\emph{(ii)} Let $0\ra G'\ra G\ra G''\ra 0$ be an exact sequence of
BT-groups over $R$. If $G$ is HW-cyclic, then so is $G'$. In
particular, if $R$ is henselian, the connected part of a HW-cyclic BT-group over $R$ is HW-cyclic.
\end{lemma}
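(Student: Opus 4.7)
The plan for (i) is to invoke Nakayama's lemma on the free $R$-module $\Lie(G^\vee)$ of rank $c$. First, if $v\in \Lie(G^\vee)$ witnesses HW-cyclicity of $G$, then reducing modulo the maximal ideal $\m$ of $R$ yields $c$ elements $\bar v, \varphi_{\bar G}(\bar v),\ldots, \varphi_{\bar G}^{c-1}(\bar v)$ generating the $c$-dimensional $k$-vector space $\Lie(\bar G^\vee)$, hence forming a basis, so $\bar G := G\otimes_R k$ is HW-cyclic. Conversely, I would lift any cyclic vector $\bar v$ over $k$ to some $v\in \Lie(G^\vee)$; the family $v,\varphi_G(v),\ldots,\varphi_G^{c-1}(v)$ then reduces modulo $\m$ to a basis of $\Lie(\bar G^\vee)$, so Nakayama's lemma promotes it to a system of generators (hence a basis) of the free rank-$c$ module $\Lie(G^\vee)$, yielding HW-cyclicity of $G$.

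For (ii), I would proceed in two stages. The first is to produce from the given short exact sequence $0\to G'\to G\to G''\to 0$ an induced short exact sequence of free $R$-modules
\[
0\to \Lie(G''^\vee)\to \Lie(G^\vee)\xra{\pi}\Lie(G'^\vee)\to 0,
\]
compatible with the Hasse-Witt maps. This uses that Serre duality reverses arrows and that $\Lie$ is exact on short exact sequences of BT-groups (the ranks satisfy $c'+c''=c$, so surjectivity of the dualized sequence together with rank-counting suffices; alternatively, it follows by applying the Snake Lemma to the Hodge filtrations inside the short exact sequence of Dieudonn\'e crystals). A cyclic vector $v\in \Lie(G^\vee)$ maps to $v':=\pi(v)\in\Lie(G'^\vee)$, and by functoriality $\varphi_{G'}^i(v')=\pi(\varphi_G^i(v))$, so the iterates $\{\varphi_{G'}^i(v')\}_{i\geq 0}$ span $\Lie(G'^\vee)$ as an $R$-module.

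The main obstacle, and the only nonformal step, is to extract a generating subfamily of the expected length $c':=\dim_k \Lie(\bar G'^\vee)$. By (i) I may pass to the residue field and study the semi-linear endomorphism $\varphi_{\bar G'}$ on the $k$-vector space $\Lie(\bar G'^\vee)$; the subtlety is that $\varphi$ is only $\sigma$-semi-linear, so I cannot directly invoke a minimal polynomial. Instead, I would let $m$ be minimal such that $\varphi^m_{\bar G'}(\bar v')=\sum_{i<m} a_i\varphi^i_{\bar G'}(\bar v')$ for some $a_i\in k$, and apply $\varphi_{\bar G'}$ to this relation: semi-linearity gives $\varphi^{m+1}_{\bar G'}(\bar v')=\sum a_i^p\varphi^{i+1}_{\bar G'}(\bar v')$, which again lies in $\mathrm{span}_k\{\bar v',\ldots,\varphi^{m-1}_{\bar G'}(\bar v')\}$. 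An induction shows the whole $\varphi_{\bar G'}$-orbit of $\bar v'$ lies in this $m$-dimensional subspace; comparing with the surjectivity hypothesis forces $m=c'$. Hence $\bar v',\ldots,\varphi^{c'-1}_{\bar G'}(\bar v')$ form a $k$-basis, and the already-proven part (i) promotes this to HW-cyclicity of $G'$ over $R$.

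The ``in particular'' claim then follows from (ii) applied to the canonical connected-\'etale short exact sequence $0\to G^\circ \to G\to G^\et\to 0$ of BT-groups, which exists over the henselian local ring $R$.
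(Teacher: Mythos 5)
Your proof is correct and follows essentially the same route as the paper. Part (i) is verbatim the paper's base-change-plus-Nakayama argument, and part (ii) reduces via (i) to the residue field, passes the cyclic vector through the surjection $\Lie(G^\vee)\twoheadrightarrow\Lie(G'^\vee)$, and uses the semi-linearity of $\varphi$ to show that the first linear dependence among the iterates of $\bar v'$ caps their span — where the paper merely says ``it is easy to see'' that the maximal initial linearly-independent string is already a basis, you spell out the induction (applying $\varphi$ to the first dependence relation and substituting), which is exactly the intended argument.
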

\begin{proof} 
(i) The property of being HW-cyclic is clearly stable under arbitrary
  base changes, so the ``only if'' part is clear. Assume that
  $G_0=G\otimes k$ is HW-cyclic. Let $\overline{v}$ be an element of
  $\Lie(G^\vee_0)=\Lie(G^\vee)\otimes k$ such that
  $(\overline{v},\varphi_{G_0}(\overline{v}),\cdots,
  \varphi^{c-1}_{G_0}(\overline{v}))$ is a basis of
  $\Lie(G^\vee_0)$. Let $v$ be any lift of $\overline{v}$ in
  $\Lie(G^\vee)$. Then by Nakayama's lemma, $(v, \varphi_G(v),\cdots,
  \varphi_G^{c-1}(v))$ is a basis of $\Lie(G^\vee)$. 

(ii) By statement (i), we may assume $R=k$. The exact sequence of BT-groups induces an exact sequence of
Lie algebras
\begin{equation}\label{exseq-Lie-alg}
0\ra\Lie(G''^\vee)\ra\Lie(G^\vee)\ra \Lie(G'^\vee)\ra 0,
\end{equation}
 and the Hasse-Witt map $\HW_{G'}$ is
induced by $\HW_G$ by functoriality. Assume that $G$ is  HW-cyclic
and $G^\vee$ has  dimension $c$. Let $u$ be an
element of $\Lie(G^\vee)$ such that $${u},
\HW_{G}({u}), \cdots, \HW_{G}^{c-1}({u})$$
form a basis of $\Lie(G^\vee)$ over $k$. We denote by $u'$ the
image of $u$ in $\Lie(G'^{\vee})$.
 Let $r\leq c$ be the maximal integer  such
that the vectors 
$$
 u',\;\,
\HW_{G'}(u'),\;\, \cdots,\;\,
\HW_{G'}^{r-1}(u')
$$ are linearly
independent over $k$. It is easy to see that they form a basis of  the $k$-vector
space $\Lie(G'^{\vee})$. Hence $G'$ is HW-cyclic.
\end{proof}

\begin{lemma}\label{lemma-HW-V} Let  $S=\Spec(R)$ be an affine scheme
  of characteristic $p>0$,  $G$ be a HW-cyclic BT-group over $R$ with
  $c=\dim(G^\vee)$ constant, and 
\[\begin{pmatrix}0 &0 &\cdots &0 &-a_1\\
1 &0 &\cdots &0 &-a_2\\
0 &1 &\cdots &0 &-a_3\\
\vdots &&\ddots &&\vdots\\
0&0&\cdots&1&-a_{c}\end{pmatrix}\in \rM_{c\times c}(R),\] be a matrix
of $\varphi_G$. Put $a_{c+1}=1$, and $P(X)=\sum_{i=0}^c
a_{i+1}X^{p^i}\in R[X]$.

\emph{(i)} Let $V_G:G^{(p)}\ra G$ be the Verschiebung homomorphism of $G$.
Then $\Ker V_G$ is isomorphic to the group scheme $\Spec(R[X]/P(X))$ with comultiplication given by $X\mapsto 1\otimes X+ X\otimes 1$.

\emph{(ii)} Let  $x\in S$, and $G_x$ be the fibre of $G$ at $x$.  Put
\begin{equation}\label{integer-i_0}
i_0(x)=\min_{0\leq i\leq c}\{i; a_{i+1}(x)\neq 0\},
\end{equation}
where $a_i(x)$ denotes the image of $a_i$ in the residue field of $x$. Then the \'etale part of $G_x$ has height $c-i_0(x)$, and the connected part of $G_x$ has height $d+i_0(x)$. In particular, $G_x$ is connected if and only if $a_i(x)=0$ for $1\leq i\leq c$.
\end{lemma}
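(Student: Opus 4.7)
For (i), I would invoke Lemma \ref{lemma-KerV}, which identifies $\Ker V_G$ with an object of $\GV_S$ whose associated $p$-Lie algebra is $(\Lie(G^\vee),\varphi_G)$. Feeding the given companion-type matrix of $\varphi_G$ into the explicit presentation of \ref{desc-ex}, Proposition \ref{GV-pLie} yields
\[
\Ker V_G\simeq\Spec\bigl(R[X_1,\ldots,X_c]/(X_j^p-X_{j+1},\ X_c^p+\textstyle\sum_{i=1}^c a_iX_i)_{1\le j<c}\bigr),
\]
each $X_j$ being primitive. Setting $X=X_1$, an immediate induction gives $X_j=X^{p^{j-1}}$, and the remaining relation collapses to $X^{p^c}+\sum_{i=1}^c a_iX^{p^{i-1}}=0$, which is exactly $P(X)=0$; since $\Delta(X^{p^{j-1}})=\Delta(X)^{p^{j-1}}$ in characteristic $p$, the comultiplication $\Delta(X)=1\otimes X+X\otimes 1$ inherited from $X_1$ is consistent with the one on the original presentation.

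For (ii), base change to the fibre at $x$ gives $\Ker V_{G_x}\simeq\Spec(\kappa(x)[X]/P_x(X))$ with $P_x(X)=\sum_{i=i_0(x)}^c a_{i+1}(x)X^{p^i}$ an additive polynomial. I would factor it as $P_x(X)=Q(X^{p^{i_0}})$, where
\[
Q(Y)=a_{i_0+1}(x)Y+a_{i_0+2}(x)Y^p+\cdots+Y^{p^{c-i_0}}
\]
is additive with nonzero linear coefficient $a_{i_0+1}(x)$, hence $Q$ is separable with $p^{c-i_0}$ distinct roots in $\bar\kappa(x)$. Because $Y\mapsto Y^{p^{i_0}}$ is a purely inseparable bijection on $\bar\kappa(x)$, $P_x$ has the same roots as $Q$, each with multiplicity $p^{i_0}$, and
\[
\Ker V_{G_x}\otimes\bar\kappa(x)\simeq\Spec\Bigl(\prod_{\alpha\in\Ker Q}\bar\kappa(x)[X]/(X-\alpha)^{p^{i_0}}\Bigr).
\]
Thus $\Ker V_{G_x}$ has $p^{c-i_0(x)}$ geometric connected components of length $p^{i_0(x)}$ each, so $|(\Ker V_{G_x})^\circ|=p^{i_0(x)}$ and $|(\Ker V_{G_x})^\et|=p^{c-i_0(x)}$.

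To convert these orders into heights of $G_x$, I would use functoriality of the connected-étale decomposition: $\Ker V_{G_x^\circ}$ is a connected subgroup of $\Ker V_{G_x}$, and the image of $\Ker V_{G_x}$ in the étale group $\Ker V_{G_x^\et}$ is étale; matching orders forces $(\Ker V_{G_x})^\circ=\Ker V_{G_x^\circ}$ and $(\Ker V_{G_x})^\et=\Ker V_{G_x^\et}$. Combined with the standard identity $|\Ker V_H|=p^{h(H)-\dim H}$ for a BT-group $H$, applied to $H=G_x^\et$ (of dimension $0$) and to $H=G_x^\circ$ (of dimension $d$), one obtains $h(G_x^\et)=c-i_0(x)$ and $h(G_x^\circ)=d+i_0(x)$. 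The last assertion is then immediate: $G_x$ is connected iff $h(G_x^\et)=0$ iff $i_0(x)=c$ iff $a_i(x)=0$ for all $1\le i\le c$. The only slightly delicate point is aligning the polynomial factorization of $P_x$ with the connected-étale decomposition of the group scheme; everything else is a matter of bookkeeping.
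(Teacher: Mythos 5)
Your proof is correct and follows essentially the same path as the paper's. Part (i) is identical (via \ref{lemma-KerV}, \ref{desc-ex}, \ref{GV-pLie} and the substitution $X_j=X^{p^{j-1}}$), and in (ii) you use the same key factorization $P_x(X)=Q\bigl(X^{p^{i_0}}\bigr)$ with $Q$ additive and separable; the paper is only slightly quicker at the end, reading $h(G_x^{\et})=c-i_0(x)$ directly off $|G_x(1)(\overline{\kappa(x)})|=|(\Ker V_{G_x})(\overline{\kappa(x)})|=p^{c-i_0}$ and getting $h(G_x^\circ)$ by additivity of heights, whereas you pass through the scheme-theoretic connected--\'etale decomposition of $\Ker V_{G_x}$ and the formula $|\Ker V_H|=p^{h(H)-\dim H}$. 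One cosmetic slip: the roots of $P_x$ in $\overline{\kappa(x)}$ are the $p^{i_0}$-th roots of the roots of $Q$, not literally the same elements (so your displayed product should run over $\alpha^{1/p^{i_0}}$ rather than $\alpha$), but this does not affect the count of components or their multiplicities.
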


\begin{proof}(i) By \ref{GV-pLie} and \ref{lemma-KerV}, $\Ker V_G$ is isomorphic to the group scheme 
\begin{equation*}\Spec\biggl(R[X_1,\dots,X_c]/(X_1^p-X_2,\cdots, X_{c-1}^p-X_c,X_c^p+a_1X_1+\cdots+a_{c}X_c)\biggr)\end{equation*}
with comultiplication $\Delta(X_i)=1\otimes X_i+X_i\otimes 1$ for $1\leq i \leq c$. By sending $(X_1,X_2,\cdots, X_c)\mapsto (X,X^p,\cdots, X^{p^{c-1}})$, we see that the above group scheme is isomorphic to $\Spec(R[X]/P(X))$ with comultiplication $\Delta(X)=1\otimes X+X\otimes 1$.

(ii) By base change, we may assume that $S=x=\Spec(k)$ and hence $G=G_x$. Let $G(1)$ be the kernel of the multiplication by $p$ on $G$. Then we have an exact sequence
\[0\ra\Ker F_G\ra G(1)\ra \Ker V_G\ra 0. \]
Since $\Ker F_G$ is an infinitesimal group scheme over $k$, we have $G(1)(\kb)=(\Ker V_G)(\kb)$, where $\kb$ is an algebraic closure of $k$. 
By the definition of $i_0(x)$, we have $P(X)=Q(X^{p^{i_0(x)}})$, where $Q(X)$ is an additive sepearable polynomial in $k[X]$ with $\deg(Q)=p^{c-i_0(x)}$. Hence the roots of $P(X)$ in $\kb$ form an $\F_p$-vector space of dimension $c-i_0(x)$. By (i),  $(\Ker V_G)(\kb)$ can be identified with the additive group consisting of the roots of $P(X)$ in $\kb$. Therefore, the  \'etale part of $G$ has height $c-i_0(x)$, and the connected part of $G$ has height $d+i_0(x)$.
\end{proof}

\subsection{}\label{sect-a-num} Let $k$ be a perfect field of
characteristic $p>0$, and $\alpha_p=\Spec(k[X]/X^p)$ be the finite group scheme over $k$ with comultiplication map $\Delta(X)=1\otimes X+X\otimes 1$. Let $G$ be a BT-group over $k$. Following Oort,  we call 
\[a(G)=\dim_{k} \Hom_{k_{\fppf}}(\alpha_p,G)\]
the $a$-number of $G$, 
where $\Hom_{k_{\fppf}}$ means the homomorphisms in the category of abelian $\fppf$-sheaves over $k$. Since the Frobenius of $\alpha_p$ vanishes, any morphism of $\alpha_p$ in $G$ factorize through $\Ker(F_G)$. Therefore we have 
\begin{align*}\Hom_{k_{\fppf}}(\alpha_p,G)&=\Hom_{k-gr}(\alpha_p,\Ker(F_G))\\
&=\Hom_{k-gr}(\Ker(F_G)^\vee,\alpha_p)\\
&=\Hom_{\pLie_k}(\Lie(\alpha_p),\Lie(\Ker(F_G))),
\end{align*}
where $\Hom_{k-gr}$ denotes the homomorphisms in the category of commutative group schemes over $k$, and the last equality uses Proposition \ref{GV-pLie}. Since we have a canonical isomorphism $\Lie(\Ker(F_G))\simeq \Lie(G)$  and $\Lie(\alpha_p)$ has dimension one over $k$ with  $\varphi_{\alpha_p}=0$,  we get
\begin{equation}\label{a-number}a(G)=\dim_k\{x\in \Lie(G)| \varphi_{G^\vee}(x)=0\}=\dim_{k}\Ker(\varphi_{G^\vee}).\end{equation}
Due to the perfectness of $k$, we have also $a(G)=\dim_k\Ker
(\widetilde{\varphi_{G^\vee}})$, where $ \widetilde{\varphi_{G^\vee}}$ is the linearization of $\varphi_{G^\vee}$.
By Proposition \ref{prop-ord}, we see that $a(G)=0$ if and only if $G$ is ordinary.

\begin{lemma}\label{lemma-a-number}Let $G$ be a BT-group over $k$, and $G^\vee$ its Serre dual. Then we have $a(G)=a(G^\vee)$.
\end{lemma}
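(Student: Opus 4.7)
The plan is to compute $a(G)$ and $a(G^\vee)$ separately in terms of the Dieudonn\'e crystal $\M:=\M(G)$ and show both equal the dimension of the single subspace $\omega_G \cap \Im(F_M)$ of $\M$, where $F_M\colon \M^{(p)}\to\M$ is the Frobenius of $\M$.

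The first step is to translate both $a$-numbers into kernels of linearized Hasse--Witt maps. Formula \eqref{a-number}, combined with the perfectness of $k$, gives $a(G)=\dim_k \Ker(\widetilde{\varphi_{G^\vee}})$, where $\widetilde{\varphi_{G^\vee}}\colon \Lie(G)^{(p)}\to \Lie(G)$ is the linearization. Applying the same formula with $G$ replaced by $G^\vee$ and using the biduality $G^{\vee\vee}\simeq G$, one gets $a(G^\vee)=\dim_k \Ker(\widetilde{\varphi_G})$, where $\widetilde{\varphi_G}\colon \Lie(G^\vee)^{(p)}\to\Lie(G^\vee)$. The lemma thus reduces to the equality of these two kernel dimensions.

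The second step identifies $\Ker(\widetilde{\varphi_G})$ with $\omega_G\cap \Im(F_M)$. The diagram \eqref{diag-Dieud} realizes $\widetilde{\varphi_G}$ as the composition $\Lie(G^\vee)^{(p)}\xrightarrow{\phi_G}\M\twoheadrightarrow \Lie(G^\vee)$ with $\phi_G$ injective, and the factorization of $F_M$ through $\phi_G$ along the conjugate filtration shows $\phi_G(\Lie(G^\vee)^{(p)})=\Im(F_M)$; hence $\Ker(\widetilde{\varphi_G})$ is identified with $\Im(F_M)\cap \omega_G$.

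The third and key step shows that $\Ker(\widetilde{\varphi_{G^\vee}})$ also has dimension $\dim_k(\omega_G\cap\Im(F_M))$. By the identification $\widetilde{\varphi_{G^\vee}}\simeq \psi_G^*$ recalled in \ref{pre-Dieud}, this kernel has dimension $\dim_k \Coker(\psi_G)$. Since $\psi_G\colon \omega_G\to \omega_G^{(p)}$ is the restriction of $V_M$ (with $\omega_G^{(p)}=\Im(V_M)\subset \M^{(p)}$, as forced by the anti-diagonal factorization of $V_M$ in \eqref{diag-Dieud}), the exactness $\Ker(V_M)=\Im(F_M)$ from property (i) of \ref{pre-Dieud} gives $\Ker(\psi_G)=\omega_G\cap\Im(F_M)$; rank--nullity then yields $\dim_k\Coker(\psi_G)=d-\rk(\psi_G)=\dim_k(\omega_G\cap\Im(F_M))$, matching the previous step and proving the lemma.

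The main obstacle is not conceptual but notational: one must track carefully the various Frobenius twists and the two filtrations (Hodge and conjugate) on $\M(G)$ appearing in \eqref{diag-Dieud}. The single nontrivial algebraic input is the exactness of the $F$--$V$ complex on $\M$, which is available precisely because $G$ is Barsotti--Tate.
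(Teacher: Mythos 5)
Your argument is correct and follows essentially the same route as the paper: you use formula \eqref{a-number} to express both $a(G)$ and $a(G^\vee)$ as kernel dimensions of linearized Hasse--Witt maps, invoke the identification $\psi_G^* = \widetilde{\varphi_{G^\vee}}$ and the commutative diagram \eqref{diag-Dieud} to realize each as $\dim_k\bigl(\omega_G \cap \phi_G(\Lie(G^\vee)^{(p)})\bigr)$, and pass from $\Coker(\psi_G)$ to $\Ker(\psi_G)$ by rank--nullity. The paper packages the bookkeeping in a slightly different order, but the identifications and the key diagram are identical.
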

\begin{proof} Let  $\psi_G:\omega_G\ra \omega_G^{(p)}$ be the $k$-linear map induced by the Verschiebung of $G$. Then $\psi^*_G$, the morphism obtained by applying the functor $\Hom_k(\_,k)$ to $\psi_G$, is identified with $\widetilde{\varphi_{G^\vee}}$. By \eqref{a-number} and the exactitude of the functor $\Hom_k(\_,k)$, we have $a(G)=\dim_k\Ker(\psi^*_G)=\dim_k\Coker(\psi_G)$. Using the additivity of $\dim_k$, we get finally $a(G)=\dim_k\Ker(\psi_G)$. By considering the commutative diagram \eqref{diag-Dieud}, we have 
\[a(G)=\dim_k \biggl(\omega_G\cap \phi_G(\Lie(G^\vee)^{(p)})\biggr).\]
On the other hand,  it follows also from \eqref{diag-Dieud} that 
\[a(G^\vee)=\dim_k\Ker(\widetilde{\varphi_G})=\dim_{k}\biggl(\phi_G(\Lie(G^\vee)^{(p)})\cap \omega_G\biggr).\]
The lemma now follows immediately.

\end{proof}

\begin{prop}\label{prop-HW-a} Let $k$ be a perfect field of characteristic $p>0$, $G$  a BT-group over $k$.
Consider the following conditions:

\emph{(i)} $G$ is HW-cyclic and non-ordinary;

\emph{(ii)} the connected part $G^\circ$ of $G$ is HW-cyclic and not of multiplicative type;

\emph{(iii)} $a(G^\vee)=a(G)=1$.

We have $\mathrm{(i)}\Rightarrow \mathrm{(ii)}\Leftrightarrow \mathrm{(iii)}$. If $k$ is algebraically closed, we have moreover $\mathrm{(ii)}\Rightarrow \mathrm{(i)}$.
\end{prop}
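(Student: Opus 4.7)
The plan is to reduce the three conditions to properties of the Hasse--Witt map $\varphi_{G^\circ}$ on $L:=\Lie(G^{\circ\vee})$. By Corollary \ref{cor-nilp-HW}, $\varphi_{G^\circ}$ is nilpotent, and by Proposition \ref{prop-etale-HW} the restriction of $\varphi_G$ to $\Lie(G^{\et\vee})$ is injective, hence bijective in finite dimension. Applying the snake lemma to the exact sequence of $\sigma$-semi-linear complexes
\[
0\to(\Lie(G^{\et\vee}),\varphi_{G^\et})\to(\Lie(G^\vee),\varphi_G)\to(L,\varphi_{G^\circ})\to 0
\]
coming from \eqref{decomp-Lie} yields a canonical isomorphism $\Ker(\varphi_G)\simeq\Ker(\varphi_{G^\circ})$, and combined with Lemma \ref{lemma-a-number} this gives $a(G)=a(G^\vee)=\dim_k\Ker(\varphi_{G^\circ})$. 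Hence everything reduces to controlling the nilpotent $\sigma$-semi-linear operator $\varphi_{G^\circ}$.

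For $(\mathrm{i})\Rightarrow(\mathrm{ii})$, Lemma \ref{lemma-cyclic}(ii) transfers HW-cyclicity from $G$ to $G^\circ$, and over a field $G$ is non-ordinary iff $G^\circ$ is not of multiplicative type. For $(\mathrm{ii})\Rightarrow(\mathrm{iii})$, the HW-cyclicity of $G^\circ$ combined with its connectedness forces, via Lemma \ref{lemma-HW-V}(ii) (where all the coefficients $a_i$ of \eqref{HW-matrix} must vanish), the matrix of $\varphi_{G^\circ}$ in the cyclic basis to be the nilpotent shift, whose kernel is spanned by the last iterate of the cyclic vector; thus $\dim_k\Ker(\varphi_{G^\circ})=1$ and $a(G)=1$.

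For $(\mathrm{iii})\Rightarrow(\mathrm{ii})$, the heart of the argument is the following linear-algebra statement: \emph{a nilpotent $\sigma$-semi-linear endomorphism $\psi$ of a finite-dimensional nonzero $k$-vector space $L$ with $\dim_k\Ker(\psi)=1$ admits a cyclic vector $v$, in the sense that $v,\psi(v),\dots,\psi^{\dim L-1}(v)$ is a $k$-basis}. To prove it I would study the flag $K_j:=\Ker(\psi^j)$: the map $\psi$ descends to an injective $\sigma$-semi-linear map $K_j/K_{j-1}\hookrightarrow K_{j-1}/K_{j-2}$ (an element with $\psi(v)\in K_{j-2}$ satisfies $\psi^{j-1}(v)=0$, so $v\in K_{j-1}$), and because $k$ is perfect, $\sigma$-semi-linear injections cannot decrease dimension; hence each successive quotient has dimension at most $\dim K_1=1$, while the strict growth of the flag up to $K_n=L$ forces $\dim K_j=j$ and $n=\dim L$. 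Any $v\notin K_{n-1}$ then satisfies $\psi^{n-1}(v)\neq 0$, and a putative nontrivial relation $\sum_{i\geq j}\lambda_i\psi^i(v)=0$ with $\lambda_j\neq 0$ would, after applying $\psi^{n-1-j}$, yield $\lambda_j^{p^{n-1-j}}\psi^{n-1}(v)=0$, a contradiction. Applied to $\psi=\varphi_{G^\circ}$ this gives HW-cyclicity of $G^\circ$, while $a(G)=1\neq 0$ excludes multiplicative type.

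For $(\mathrm{ii})\Rightarrow(\mathrm{i})$ under the additional hypothesis that $k$ is algebraically closed, the connected-\'etale sequence of $G$ splits canonically, giving $\Lie(G^\vee)=\Lie(G^{\circ\vee})\oplus\Lie(G^{\et\vee})$ with $\varphi_G$ block-diagonal. I would construct a cyclic vector of the form $v=v^\circ+v^\et$, where $v^\circ$ is the given cyclic vector for $\varphi_{G^\circ}$ and $v^\et$ is a cyclic vector for the bijective $\sigma$-semi-linear operator $\varphi_{G^\et}$ on $\Lie(G^{\et\vee})$. To produce $v^\et$, Lang--Steinberg applied to $\GL_h$ (where $h$ is the height of $G^\et$) brings $\varphi_{G^\et}$ into the standard form $e_i\mapsto e_i$ in a suitable basis; any $v^\et=\sum\lambda_i e_i$ with the $\lambda_i$ forming an $\F_p$-basis of $\F_{p^h}\subset k$ is then cyclic, since the associated Moore determinant $\det(\lambda_i^{p^j})$ is nonzero. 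Finally, for $n\geq\dim L$ one has $\varphi_G^n(v^\circ)=0$, so $\varphi_G^n(v)=\varphi_G^n(v^\et)\in\Lie(G^{\et\vee})$, and these iterates span $\Lie(G^{\et\vee})$ (by invertibility of $\varphi_{G^\et}$, shifting indices); subtracting these contributions, the iterates $\varphi_{G^\circ}^n(v^\circ)$ for $0\leq n<\dim L$ are recovered inside the span, and they generate $\Lie(G^{\circ\vee})$ by cyclicity of $v^\circ$. The main obstacle I anticipate is the linear-algebra lemma in $(\mathrm{iii})\Rightarrow(\mathrm{ii})$, where the $\sigma$-semi-linearity must be tracked carefully through the Jordan-block style argument.
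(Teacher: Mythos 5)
Your proof is correct and follows essentially the same strategy as the paper's: reduce everything to the nilpotent Hasse--Witt map $\varphi_{G^\circ}$, use the kernel filtration (each quotient $\Ker\varphi_{G^\circ}^m/\Ker\varphi_{G^\circ}^{m-1}$ injects into the one-dimensional $\Ker\varphi_{G^\circ}$) to produce a cyclic vector when $a=1$, and, over an algebraically closed field, splice in a cyclic vector for the bijective \'etale piece via a nonvanishing Moore determinant. The small deviations --- the snake lemma on \eqref{decomp-Lie} to identify $\Ker\varphi_G$ with $\Ker\varphi_{G^\circ}$ instead of invoking $a(G)=a(G^\circ)$ and the perfect-field splitting, and taking the $\lambda_i$ to be an $\F_p$-basis of $\F_{p^h}$ rather than a generic point avoiding the vanishing locus of the Moore polynomial --- are harmless and arguably slightly cleaner, but they do not constitute a different route.
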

\begin{rem} In \cite[Lemma 2.2]{oort}, Oort proved the following assertion, which is a generalization of $\mathrm{(iii)}\Rightarrow\mathrm{(ii)}$: Let  $k$ be an algebraically closed field of characteristic $p>0$, and $G$ be a connected BT-group with $a(G)=1$. Then  there exists a basis of the Dieudonn\'e module $M$ of $G$ over $W(k)$, such that the action of Frobenius on $M$ is given by a display-matrix of ``normal form'' in the sense of \cite[2.1]{oort}.
\end{rem}

\begin{proof} $\mathrm{(i)}\Rightarrow \mathrm{(ii)}$ follows from \ref{lemma-cyclic}(ii).

$\mathrm{(ii)}\Rightarrow\mathrm{(iii)}$. First, we note that $a(G)=a(G^\circ)$, so we may assume $G$ connected. Since $G$ is not of multiplicative type, we have $c=\dim(G^\vee)\geq 1$.  By Lemma \ref{lemma-HW-V}(ii), there exists a basis of $\Lie(G^\vee)$ over $k$ under which $\varphi_G$ is expressed by 
\[\begin{pmatrix}0 &0 &\cdots &0 &0\\
1 &0 &\cdots &0 &0\\
0&1&\cdots &0&0\\
\vdots &&\ddots &&\vdots\\
0&0&\cdots&1&0\end{pmatrix}\in \rM_{c\times c}(k).\] According to \eqref{a-number}, $a(G^\vee)$ equals to $\dim_k\Ker(\varphi_G)$, \ie the $k$-dimension of the solutions of the equation system in $(x_1,\cdots, x_c)$
\[\begin{pmatrix}0 &0 &\cdots &0 &0\\
1 &0 &\cdots &0 &0\\
\vdots &&\ddots &&\vdots\\
0&0&\cdots&1&0\end{pmatrix}\begin{pmatrix}x_1^p\\
x_2^p\\ 
\vdots\\x_c^p\end{pmatrix}=0\]
The solutions $(x_1,\cdots,x_c)$ form clearly a vector space  over $k$ of dimension $1$, \ie we have $a(G^\vee)=1$.

$\mathrm{(iii)}\Rightarrow\mathrm{(ii)}.$ Let  $G^\et$ be the \'etale part of $G$. Since $k$ is perfect, the exact sequence \eqref{decomp-BT} splits \cite[Chap. II \S7]{De}; so we have $G\simeq G^\circ\times G^\et$. We put $M=\Lie(G^\vee)$, $M_1=\Lie(G^{\circ\vee})$ and $M_2=\Lie(G^{\et\vee})$ for short. By \ref{prop-etale-HW} and \ref{cor-nilp-HW}, we have a decomposition $M=M_1\oplus M_2$, such that $M_1,M_2$ are stable under $\varphi_G$, and the action of $\varphi_G$ is nilpotent on $M_1$ and bijective on $M_2$. We note that $a(G^{\circ\vee})=a(G^\circ)=a(G)=1$. By the last remark of \ref{sect-a-num}, $G^\circ$ is not of multiplicative type, hence $\dim_kM_1=\dim(G^{\circ \vee})\geq 1$. It remains to prove that $G^\circ$ is HW-cyclic.
 Let $n$ be the minimal integer such that $\varphi^n_G(M_1)=0$. We have a strictly increasing filtration 
\[0\subsetneq \Ker(\varphi_G)\subsetneq \cdots \subsetneq \Ker(\varphi_G^n)=M_1.\]
If $n=1$, then $M_1$ is one-dimensional, hence $G^\circ$ is clearly HW-cyclic. Assume $n\geq 2$.
For $2\leq m\leq n$,  $\varphi_G^{m-1}$ induces an injective map 
\[\overline{\varphi_G^{m-1}}:\Ker(\varphi_G^{m})/\Ker(\varphi_G^{m-1})\lra \Ker(\varphi_G).\]
Since  $\dim_k\Ker (\varphi_G)=a(G^{\circ\vee})=1$, $\overline{\varphi_G^{m-1}}$ is
necessarily bijective. So we have $\dim_k \Ker(\varphi_G^{m})=m$ for $1\leq m \leq n$. Let $v$ be an element of $M_1$ but not in $\Ker(\varphi_G^{n-1})$. Then $v,\varphi_G(v),\cdots,\varphi_G^{n-1}(v)$ are linearly independant, hence they form a basis of $M_1$ over $k$. This proves that $G^\circ$ is HW-cyclic.

Assume  $k$ algebraically closed. We  prove that $\mathrm{(ii)}\Rightarrow \mathrm{(i)}$. Noting that $G$ is ordinary if and only if $G^\circ$ is  of multiplicative type, we only need to check that $G$ is HW-cyclic.  We conserve the notations above.  Since $\varphi_G$ is bijective on $M_2$ and $k$ algebraically closed,  there exists a basis $(e_1,\cdots, e_m)$ of $M_2$ such that $\varphi_G(e_i)=e_i$ for $1\leq i \leq m$. Let $v\in M_1$ but not in $\Ker(\varphi_G^{n-1})$ as above, and put $u=v+\lambda_1 e_1+\cdots \lambda_m e_m$, where $\lambda_i(1\leq i\leq m)$ are some elements in $k$ to be determined later. Then we have 
\[\begin{pmatrix}\varphi_G^{n}(u)\\
\vdots\\ \varphi_G^{n+m-1}(u)\end{pmatrix}=
\begin{pmatrix}\lambda_1^{p^n}& \cdots &\lambda_m^{p^n}\\
\vdots &\ddots & \vdots\\
\lambda_1^{p^{n+m-1}}&\cdots & \lambda_m^{p^{n+m-1}}
\end{pmatrix}\begin{pmatrix}e_1\\ \vdots\\ e_m \end{pmatrix}.\]
Let $L(\lambda_1,\cdots,\lambda_m)\in k[\lambda_1,\cdots, \lambda_m]$
be the determinant polynomial of the matrix on the right side. An
elementary computation shows that the polynomial
$L(\lambda_1,\cdots,\lambda_m)$ is not null.   We can choose
$\lambda_1,\cdots, \lambda_m\in k$ such that
$L(\lambda_1,\cdots,\lambda_m)\neq 0$ because $k$ is algebraically closed. So  $\varphi_G^n(u),\cdots, \varphi_G^{n+m-1}(u)$ form a basis of $M_2$ over $k$. Since 
$$\varphi^{i}_G(u)\equiv \varphi^{i}_G(v) \mod M_2 \quad\text{ for} \quad 0\leq i \leq n,$$ 
by the choice of $u$, we see that $\{u,\varphi_G(u), \cdots, \varphi_G^{n+m-1}(u)\}$ form a basis of $M=\Lie(G^\vee)$ over $k$.
\end{proof}

By combining \ref{lemma-a-number} and \ref{prop-HW-a}, we obtain the following
\begin{cor} Let $k$ be an algebraically closed field of characteristic $p>0$.  Then  a  BT-group over $k$ is HW-cyclic if and only if so is its Serre dual. 
\end{cor}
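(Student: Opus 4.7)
The plan is to reduce the equivalence to the manifestly symmetric characterization of HW-cyclicity supplied by Proposition \ref{prop-HW-a}(iii), after splitting into the ordinary and non-ordinary cases.

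First I would observe that being ordinary is preserved under Serre duality (this is noted immediately after the definition of ordinary BT-groups, and follows from the uniqueness of the multiplicative-étale decomposition). Hence ``$G$ ordinary'' and ``$G^\vee$ ordinary'' are equivalent, and the dichotomy respects duality, so we may treat the two cases separately.

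For the non-ordinary case the argument is essentially formal. Since $k$ is algebraically closed, Proposition \ref{prop-HW-a} gives the equivalence (i)$\Leftrightarrow$(iii), so $G$ (being non-ordinary) is HW-cyclic if and only if $a(G) = a(G^\vee) = 1$. By Lemma \ref{lemma-a-number} one has $a(G) = a(G^\vee)$, so this condition is tautologically invariant under $G \leftrightarrow G^\vee$. Applying Proposition \ref{prop-HW-a} a second time to the non-ordinary BT-group $G^\vee$ yields exactly the same criterion for $G^\vee$ to be HW-cyclic, giving the equivalence immediately in this case.

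For the ordinary case Proposition \ref{prop-HW-a} no longer applies, and one argues directly. By Proposition \ref{prop-ord}(d) the linearization of $\varphi_G$ is an isomorphism on $\Lie(G^\vee)$, and dually for $\varphi_{G^\vee}$ on $\Lie(G)$. Over the algebraically closed field $k$ one may choose a $\varphi_G$-fixed basis $e_1,\dots,e_c$ of $\Lie(G^\vee)$; cyclicity of a vector $v = \sum a_i e_i$ then reduces to the non-vanishing of a Moore-type determinant $\det(a_i^{p^j})$, which holds as soon as the $a_i$ are chosen to be $\F_p$-linearly independent in $k$ (possible since $k \supseteq \F_{p^c}$). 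The same construction for $\varphi_{G^\vee}$ on $\Lie(G)$ exhibits a cyclic vector on the dual side, so both $G$ and $G^\vee$ are HW-cyclic whenever the dimensional requirements $c,d \geq 1$ are met. The main subtlety to be careful about is the degenerate situations in which $G$ is purely multiplicative or purely étale, where one of $\dim G$, $\dim G^\vee$ vanishes and the equivalence must be read with the convention built into the definition of HW-cyclic.
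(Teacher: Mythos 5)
Your non-ordinary case is precisely the derivation the paper means by ``combining \ref{lemma-a-number} and \ref{prop-HW-a}'': Proposition~\ref{prop-HW-a}(i)$\Leftrightarrow$(iii) together with the symmetry $a(G)=a(G^\vee)$ of Lemma~\ref{lemma-a-number}. That one-line proof covers nothing beyond the non-ordinary case. Your further observation that the ordinary case needs a separate argument is correct, and your Moore-determinant construction does show that an ordinary BT-group with $\dim G\geq 1$ and $\dim G^\vee\geq 1$ is always HW-cyclic, so the equivalence holds trivially in that subcase.

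However, the ``degenerate situations'' you defer at the end are not a matter of reading a convention; they are counterexamples. Take $G=\Q_p/\Z_p$: here $\dim(G^\vee)=1$ and $\Lie(G^\vee)$ is one-dimensional, so $G$ is HW-cyclic, but $G^\vee=\mu_{p^\infty}$ has $\dim\bigl((G^\vee)^\vee\bigr)=0$, which violates the requirement $c\geq 1$ of Definition~\ref{defn-cyclic}, so $G^\vee$ is not HW-cyclic. Thus the corollary as literally stated fails for ordinary $G$ when one of $\dim G$, $\dim G^\vee$ vanishes, and no convention rescues it. The statement should be read as restricted to non-ordinary $G$ (or to $G$ with both dimension and codimension positive), which is exactly the regime Proposition~\ref{prop-HW-a} and the paper's cited proof address. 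Your instinct to flag the degenerate case was sound; the correct conclusion is that the equivalence genuinely breaks there, not that it survives under some unstated reading.
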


\subsection{Examples}\label{HW-exem} Let $k$ be a perfect field,
$W(k)$ be the ring of Witt vectors with coefficients in $k$,  and $\sigma$ be the Frobenius automorphism of $W(k)$. Let $s,r$ be relatively prime integers such that $0\leq s\leq r$ and $r\neq 0$; put $\lambda=\frac{s}{r}$. We consider the Dieudonn\'e module $M^\lambda\simeq W(k)[F,V]/(F^{r-s}-V^s)$, where $W(k)[F,V]$ is the non-commutative ring with relations $FV=VF=p$, $Fa=\sigma(a)F$ and $V\sigma(a)=aV$ for all  $a\in W(k)$.
We note that $M^\lambda$ is free of rank $r$ over $W(k)$  and
$M^\lambda/VM^\lambda\simeq k[F]/F^{r-s}$. By the contravariant Dieudonn\'e theory, $M^\lambda$ corresponds to a BT-group $G^\lambda$ over $k$ of height $r$ with $\Lie(G^{\lambda\vee})=M^\lambda/VM^\lambda$. We see easily that $G^\lambda$ is HW-cyclic, and we call it the \emph{elementary BT-group of slope $\lambda$.} We note that $G^0\simeq \Q_p/\Z_p$, $G^1\simeq \mu_{p^\infty}$, and $(G^{\lambda})^\vee\simeq G^{1-\lambda}$ for $0\leq \lambda\leq 1$. 

Assume $k$  algebraically closed. Then by the Dieudonn\'e-Manin's classification of isocrystals \cite[Chap.IV \S4]{De}, any BT-group  over $k$ is isogenous to a finite product of $G^\lambda$'s; moreover, any connected one-dimensional BT-group over $k$ of height $r$ is necessarily isomorphic to $G^{1/r}$ \cite[Chap.IV \S8]{De}, hence  in particular HW-cyclic.

\begin{prop}\label{prop-HW-versal}
 Let $k$ be an algebraically closed field of characteristic $p>0$, $R$
 be a noetherian complete regular local $k$-algebra with residue field
 $k$, and $S=\Spec(R)$. Let $G$ be a connected HW-cyclic BT-group over $R$ of dimension $d\geq 1$ and height $c+d$,
\[\h=\begin{pmatrix}0 &0 &\cdots &0 &-a_1\\
1 &0 &\cdots &0 &-a_2\\
0&1&\cdots&0&-a_3\\
\vdots &&\ddots &&\vdots\\
0&0&\cdots&1&-a_c\end{pmatrix}\in \rM_{c\times c}(R)\] be a matrix of $\HW_G$.

\emph{(i)} If $G$ is versal over $S$, then $\{a_1,\cdots, a_c\}$ is a subset of a regular system of parameters of $R$.

\emph{(ii)} Assume that $d=1$. The converse of \emph{(i)} is also true, \ie if $\{a_1,\cdots,a_c\}$ is a subset of a regular system of parameters of $R$ then $G$ is versal over $S$. Furthermore, $G$ is the universal deformation of its special
fiber if and only if $\{a_1,\cdots, a_c\}$ is a system
of regular parameters of $R$.  
\end{prop}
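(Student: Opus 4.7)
The strategy is to make the Kodaira--Spencer map explicit in the Hasse--Witt coordinates by exploiting horizontal sections of $\M(G)$ produced by Frobenius. Fix a basis $e_1,\dots,e_c$ of $\Lie(G^\vee)$ adapted to $\h$ (so $\varphi_G(e_i)=e_{i+1}$ for $i<c$ and $\varphi_G(e_c)=-\sum_i a_i e_i$) and any basis $\omega_1,\dots,\omega_d$ of $\omega_G$. For each $i$ put $\tilde e_i:=\phi_G(1\otimes e_i)\in\M(G)$. In characteristic $p$ the Frobenius pullback connection on $\M(G)^{(p)}$ vanishes on elements of the form $1\otimes m$ (because $d(f^p)=0$); together with the horizontality of $F_M$ this shows that each $\tilde e_i$ is $\nabla$-horizontal. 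By diagram~\eqref{diag-Dieud}, $\tilde e_i\equiv\varphi_G(e_i)\pmod{\omega_G}$. Choosing any lift $\hat e_1\in\M(G)$ of $e_1$, the family $\{\hat e_1,\tilde e_1,\dots,\tilde e_{c-1},\omega_1,\dots,\omega_d\}$ is an $R$-basis of $\M(G)$, and there exist unique $f_j\in R$ with
\[
\tilde e_c=-a_1\hat e_1-\sum_{i=2}^c a_i\,\tilde e_{i-1}+\sum_{j=1}^d f_j\,\omega_j.
\]
Applying $\nabla$, invoking $\nabla\tilde e_i=0$, and projecting to $\Lie(G^\vee)\otimes\Omega^1_{S/k}$ (which kills the Leibniz terms $\omega_j\otimes df_j$) yields the key identity
\[
a_1\,\overline{\nabla\hat e_1}+\sum_{i=1}^c e_i\otimes da_i=\sum_{j=1}^d f_j\,\KS(\omega_j),
\]
where the overline denotes the image in $\Lie(G^\vee)\otimes\Omega^1_{S/k}$.

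Next I would specialize at the closed point. Lemma~\ref{lemma-HW-V}(ii) applied to the connected fibre $G_0:=G\otimes k$ gives $a_i(0)=0$ for every $i$, so the formula reduces to
\[
\sum_{i=1}^c e_i(0)\otimes da_i(0)=\KS_0\bigl(\tilde e_c|_0\bigr)\quad\text{in }\Lie(G_0^\vee)\otimes_k\Omega^1_{S/k}(0),
\]
where $\tilde e_c|_0=\sum_j f_j(0)\,\omega_j|_0\in\omega_{G_0}$. A crucial observation: $\tilde e_c|_0=\phi_{G_0}(1\otimes e_c(0))$ is nonzero, because $\phi_{G_0}$ is injective by the conjugate filtration~\eqref{filt-cong} and $e_c(0)\neq 0$. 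Contracting the identity with $\partial\in\cT_{S/k}(0)$ gives the clean relation
\[
\sum_{i=1}^c\partial(a_i)(0)\,e_i(0)=\Kod_0(\partial)\bigl(\tilde e_c|_0\bigr)\in\Lie(G_0^\vee).
\]

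Part (i) follows immediately: if $G$ is versal then $\Kod_0$ is surjective, and evaluation at the nonzero vector $\tilde e_c|_0$ is a surjection $\Hom_k(\omega_{G_0},\Lie(G_0^\vee))\twoheadrightarrow\Lie(G_0^\vee)$, so the composite $\partial\mapsto\sum_i\partial(a_i)(0)e_i(0)$ is surjective, whence $da_1(0),\dots,da_c(0)$ are $k$-linearly independent in $\m/\m^2$. For the converse in (ii), assume $d=1$: the formula becomes $f(0)\,\KS(\omega)(0)=\sum_i e_i(0)\otimes da_i(0)$ with $f(0)\in k^{\times}$, an \emph{equality} (not merely a linear relation) which makes the previous implication reversible; linear independence of the $da_i(0)$ therefore forces $\Kod_0$ surjective, hence $G$ versal. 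The universal deformation statement of (ii) then follows from~\ref{prop-deform}(ii): the universal deformation ring has Krull dimension $cd=c$, so $G$ is universal iff $\dim R=c$ and $\Kod_0$ is an isomorphism, iff $\{a_1,\dots,a_c\}$ is a \emph{full} regular system of parameters of $R$.

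The technical heart of the argument is the first paragraph: recognizing $\phi_G(1\otimes e_i)$ as horizontal via the Frobenius pullback of $\nabla$, and packaging $\M(G)$ in a basis so that differentiating the single relation expressing $\tilde e_c$ collapses exactly to a sum in the $da_i$'s. Once that identity is established the verification of (i) and (ii) is just linear algebra at the closed point.
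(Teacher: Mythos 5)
Your proof is correct and takes essentially the same approach as the paper's: both exploit the horizontality of $\phi_G\circ(1\otimes{-})=F_M\circ(1\otimes{-})$ (your $\tilde e_i$ are the paper's $f_{i+1}$, and your $\sum_j f_j\omega_j$ is exactly the element the paper names $e_1$ and shows lies in $\omega_G$), then evaluate $\nabla$ to read off the Kodaira--Spencer map in the Hasse--Witt coordinates. The only presentational difference is that you make explicit why the Frobenius pullback connection kills $1\otimes m$, which the paper uses silently.
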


\begin{proof}  Let $(\M(G),F_M,\nabla)$ be the finite free $\cO_S$-module equipped with a semi-linear endomorphism $F_M$ and a connection $\nabla:\M(G)\ra \M(G)\otimes_{\cO_{S}}\Omega^1_{S/k}$, obtained by evaluating the Dieudonn\'e crystal of $G$ at the trivial immersion $S\hra S$  (cf. \ref{pre-Dieud}). Recall that 
we have a commutative diagram 
\begin{equation}\label{diag-F-phi}
\xymatrix{\M(G)^{(p)}\ar[rr]^{F_M}\ar[d]_{pr}&&\M(G)\ar[d]^{pr}\\
\Lie(G^\vee)^{(p)}\ar[rr]^{\widetilde{\varphi_G}}\ar@{^(->}[urr]^{\phi_G}&&\Lie(G^\vee),}\end{equation}
where $\phi_G$ is universally injective \eqref{diag-Dieud}.
 Let
$\{v_1,\cdots, v_c\}$  be a basis of $\Lie(G^\vee)$ over $\cO_S$ under
 which $\HW_G$
is expressed by $\h$, \ie we have $\varphi_G^{i-1}(v_1)=v_i$ for
$1\leq i\leq c$ and
$\varphi_G^{c}(v_1)=\varphi_G(v_c)=-\sum_{i=1}^ca_iv_i$. Let $f_1$ be
a lift of $v_1$ to $\Gamma(S,\M(G))$, and put
$f_{i+1}=\phi_G(v_i^{(p)})$ for $1\leq i \leq c-1$, where
$v_i^{(p)}=1\otimes v_i\in\Gamma(S,\Lie(G^\vee)^{(p)})$.  The image
of $f_i$ in $\Gamma(S,\Lie(G^\vee))$ is thus $v_i$ for $1\leq i\leq c$ by \eqref{diag-F-phi}.  We put 
\begin{equation}\label{defn-e1}
e_1=\phi_G(v_c^{(p)})+a_1f_1+\cdots +a_cf_c\in \Gamma(S,\M(G)).
\end{equation}
 The image of  $e_1$ in $\Gamma(S,\Lie(G^\vee))$ is
 $\varphi_G(v_c)+\sum_{i=1}^ca_iv_i=0$; so we have $e_1\in
 \Gamma(S,\omega_G).$  By \ref{lemma-HW-V}(ii), we  notice  that  $a_1,\cdots,
a_c$ belong to the maximal ideal $\m_R$ of $R$, as $G$ is
connected. Hence, we have $\overline{e_1}=\overline{\phi_G(v^{(p)}_c)}$, where for a $R$-module $M$ and $x\in M$, we denote by $\xb$ the
canonical image of $x$ in $M\otimes k$. Since  $\phi_G$ commutes with
base change  and  is universally
injective, we get
$\overline{e_1}=\overline{\phi_G(v^{(p)}_c)}=\phi_{G\otimes k}(\overline{v^{(p)}_c})\neq 0$. Therefore, we can choose
 $e_2,\cdots, e_d\in \Gamma(S,\omega_G)$ such that $(e_1,\cdots, e_d)$
 becomes a basis of $\omega_G$ over $\cO_S$, so  $(e_1,\cdots,e_d,f_1,\cdots, f_c)$ is a basis of $\M(G)$. 
Since $F_M$ is horizontal for the connection $\nabla$ (cf. \ref{pre-Dieud}(ii)), we have  \[\nabla(\phi_G(v^{(p)}_c))=\nabla(F_M(f_c^{(p)}))=0.\]  In view of \eqref{defn-e1}, we get
\begin{align}\nabla(e_1)&=\sum_{i=1}^c f_i\otimes da_i+\sum_{i=1}^c
a_i\nabla(f_i)\nonumber\\
&\equiv \sum_{i=1}^c f_i\otimes da_i \quad (\mathrm{mod}\; \m_R).\label{nabla-e_1}
\end{align} 
Let $\KS_0$ and $\Kod_0$ be respectively the reductions modulo $\m_R$ of \eqref{morph-KS} and \eqref{Kod-map}. Since $(\overline{v_i})_{1\leq i\leq c}$ is a base of $\Lie(G^\vee)\otimes k$, we can write 
\[\KS_0(e_j)=\sum_{i=1}^c\overline{v_i}\otimes \theta_{i,j} \quad \quad \text{for $1\leq j\leq d$,}\]
 where $\theta_{i,j}\in \Omega_{S/k}\otimes k$.
From \eqref{nabla-e_1}, we deduce that $\theta_{i,1}=da_i$.
By the definition of $\Kod_0$, we have
\begin{equation}\label{equ-Kod_0}
\Kod_0(\partial)=\sum_{j=1}^d\sum_{i=1}^c<\partial,\theta_{i,j}>\overline{e_j}^*\otimes \overline{v_i}
\end{equation}
where 
${\partial}\in \cT_{S/k}\otimes k$,  $<\bullet,\bullet>$ is the canonical pairing between
$\cT_{S/k}\otimes k$ and $\Omega^1_{S/k}\otimes k$, and $(\overline{e_i}^*)_{1\leq i\leq d}$ denotes the dual basis of $(\overline{e_i})_{1\leq i\leq d}$. Now assume that $G$ is versal over $S$, \ie  $\Kod_0$ is surjective by definition \eqref{versal}. In particular, there are $\partial_1,\cdots, \partial_c\in \cT_{S/k}\otimes k$ such that $\Kod_0(\partial_i)=\overline{e_1}^*\otimes v_i$ for $1\leq i\leq c$, \ie   we have 
\begin{equation}\label{formula-versal-1}<\partial_i,da_j>=\begin{cases}1 & \text{if $i=j$}\\
0& \text{if $i\neq j$} \end{cases}\quad \text{for $1\leq i,j\leq c$,}\end{equation}
and \[ <\partial_i, \theta_{\ell,j}>=0 \quad \quad  \text{for $1\leq i,j \leq c, 2\leq\ell\leq d $}.\] 
From \eqref{formula-versal-1}, we see easily that $da_1,\cdots, da_c$ are linearly independent in $\Omega_{S/k}\otimes k\simeq \m_R/\m_R^2$; therefore, $(a_1,\cdots, a_c)$ is a part of a regular system of parameters of $R$. Statement (i) is proved.

  For statement (ii), we assume $d=1$ and  that $(a_1,\cdots,a_c)$ is a part of a regular system of parameters of $R$. Then the formula \eqref{equ-Kod_0} is simplified as
\[\Kod_0(\partial)=\sum_{i=1}^c<\partial,da_i>\overline{e_1}^*\otimes \overline{v_i}.\]
 Since $da_1,\cdots, da_c$ are linearly independent in $\Omega_{S/k}^1\otimes k$, there exist $\partial_1,\cdots, \partial_c\in \cT_{S/k}\otimes k$ such that \eqref{formula-versal-1} holds, \ie $(\overline{e_1}^*\otimes \overline{v_i})_{1\leq i\leq c}$ are in the image of $\Kod_0$. But the elements $(\overline{e_1}^*\otimes\overline{v_i})_{1\leq i\leq c}$ form already a basis of $\cHom_{\cO_S}(\omega_G,\Lie(G^\vee))\otimes k$. So $\Kod_0$ is surjective, and hence $G$ is versal over $S$ by Nakayama's lemma. Let $G_0$ be the special fiber of $G$. It remains to prove that when $d=1$, $G$ is the universal
deformation of $G_0$ if and only if 
$\dim(S)=c$ and $G$ is versal over $S$. Let $\bS$ be the local moduli in characteristic $p$ of $G_0$. By the universal property of $\bG$ \eqref{cor-alg-univ}, there exists a unique morphism 
$f:S\ra \bS$ such that  $G\simeq \bG\times_{\bS}S$. Since $S$ and $\bS$ are local complete regular schemes
over $k$ with residue field $k$ of  the same dimension, $f$ is an
isomorphism if and only if the tangent map of $f$ at the closed point
of $S$, denoted by $T_f$, is an isomorphism. By the functoriality of Kodaira-Spencer maps \eqref{Kod-map}, we have a commutative diagram
\[\xymatrix{\cT_{S/k}\otimes_{\cO_S} k\ar[d]_{T_f}\ar[rr]^{\Kod_0^S}&&\Hom_k(\omega_{G_0},\Lie(G_0^\vee))\ar@{=}[d]\\
\cT_{\bS/k}\otimes_{\cO_{\bS}}k\ar[rr]^{\Kod_0^{\bS}} &&\Hom_k(\omega_{G_0},\Lie(G_0^\vee))},\]
where horizontal arrows are the Kodaira-Spencer maps evaluated at the closed points \eqref{Kod-0}. Since $\Kod_0^S$ and $\Kod_0^{\bS}$ are isomorphisms according to the first part of this propostion, we deduce that so is $T_f$. This completes the proof.
\end{proof}

\section{Monodromy of a HW-cyclic BT-group over a Complete Trait of Characteristic $p>0$}

\subsection{}\label{nota-dvr}
Let $k$ be an algebraically closed field of \car $p>0$,   $A$ be a
complete discrete valuation ring of \car $p$, with residue field $k$
and fraction field $K$.  We put  $S=\Spec(A)$, and denote by $s$ its closed
point, by $\eta$ its generic point. Let $\Kb$ be an algebraic closure of $K$, $\Ks$
be the maximal separable extension of $K$ contained in $\Kb$,
$K^{\mathrm{t}}$ be the maximal tamely ramified extension of $K$
contained in $\Ks$. We put $I=\Gal(\Ks/K)$, $I_p=\Gal(\Ks/K^\tame)$
and $I_t=I/I_p=\Gal(K^\tame/K)$.

Let $\pi$ be a uniformizer of $A$; so we have $A\simeq k[[\pi]]$.
Let $\tv$ be the valuation on $K$ normalized by $\tv(\pi)=1$; we
denote also by $\tv$ the unique extension of $\tv$ to $\Kb$.  For
every $\alpha\in \Q$, we denote by $ \m_{\alpha}$ (\resp by
$\m_{\alpha}^+$) the set of elements $x\in \Ks$ such that
$\tv(x)\geq \alpha$ (\resp $\tv(x)> \alpha$). We put
\begin{equation}\label{defn-V_alpha}V_\alpha=\m_\alpha/\m_\alpha^+,\end{equation}
 which is a $k$-vector space of dimension 1 equipped with a continuous action of  the Galois group
$I$.

\subsection{}\label{galois-char} First, we recall some properties of the inertia groups
 $I_p$ and
$I_t$  \cite[Chap. IV]{CL}. The subgroup $I_p$, called the
\emph{wild inertia subgroup}, is the unique maximal pro-$p$-group contained
in $I$ and  hence normal in $I$. The quotient $I_t=I/I_p$ is a
commutative profinite group, called the \emph{tame inertia group.}
We have a canonical isomorphism
\begin{equation}\label{theta1}
\theta: I_t\xra{\sim}
\varprojlim_{(d,p)=1}\mu_d,
\end{equation}
 where  the projective
system is taken over positive integers prime to $p$, $\mu_d$ is the
group of $d$-th roots of unity in $k$, and the transition maps
$\mu_{m}\ra \mu_d$ are given by  $\zeta\mapsto \zeta^{m/d}$,
whenever $d$ divides $m$. We denote by $\theta_d:I_t\ra \mu_d$ the
projection induced by \eqref{theta1}. Let $q$ be a power of $p$,
$\F_q$ be the finite subfield of $k$ with $q$ elements. Then
$\mu_{q-1}=\F_q^\times$, and we can write $\theta_{q-1}:I_t\ra
\F_q^\times$. The character $\theta_d$ is characterized by the
following property.

\begin{prop}[\cite{Se} Prop.7]\label{prop-V-alpha}
Let $a,d$  be relatively prime positive integers with $d$ prime to $p$. Then the natural action of $I_p$ on the $k$-vector
space $V_{a/d}$ \eqref{defn-V_alpha} is trivial, and  the induced
action of $I_t$ on $V_{a/d}$ is given by the character
$(\theta_d)^a:I_t\ra \mu_d$. In particular, if $q$ is a power of
$p$, the action of $I_t$ on $V_{1/(q-1)}$ is given by the character
$\theta_{q-1}:I_t\ra \F_q^\times$ and any $I$-equivariant
$\F_p$-subspace of $V_{1/(q-1)}$ is an $\F_{q}$-vector space.
\end{prop}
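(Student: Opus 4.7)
The plan is to read off the action of $I$ on $V_{a/d}$ from an explicit choice of uniformizer on a tame extension. Since $d$ is prime to $p$, fix $\pi_d\in\Ks$ with $\pi_d^d=\pi$; the extension $K(\pi_d)/K$ is then totally tamely ramified of degree $d$. Then $\tv(\pi_d)=1/d$, so $\tv(\pi_d^a)=a/d$, and the class $\overline{\pi_d^a}\in V_{a/d}$ is a $k$-basis of this one-dimensional $k$-vector space.

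Next I would compute the Galois action on this generator. For $\sigma\in I$, applying $\sigma$ to $\pi_d^d=\pi$ gives $\sigma(\pi_d)=\chi(\sigma)\pi_d$ for a unique $\chi(\sigma)\in\mu_d$, and $\chi\colon I\to\mu_d$ is a continuous character. Because $K(\pi_d)/K$ is tame, $I_p$ fixes $\pi_d$, so $\chi$ is trivial on $I_p$ and factors through $I_t$. Choosing the roots compatibly across $d$ (that is, $\pi_{de}^{e}=\pi_d$ whenever $\gcd(de,p)=1$), the induced character $I_t\to\mu_d$ is precisely the component $\theta_d$ of the canonical isomorphism $\theta\colon I_t\xra{\sim}\varprojlim_{(d,p)=1}\mu_d$ recalled in \ref{galois-char}. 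It then follows that $\sigma(\pi_d^a)=\chi(\sigma)^a\pi_d^a=\theta_d(\sigma)^a\,\pi_d^a$, proving both that $I_p$ acts trivially on $V_{a/d}$ and that the induced action of $I_t$ is given by $\theta_d^a$.

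For the particular case $d=q-1$, $a=1$, the first half of the second assertion is immediate. To see that an $I$-stable $\F_p$-subspace $W\subset V_{1/(q-1)}$ is automatically an $\F_q$-subspace, I would use that $\theta_{q-1}\colon I_t\to\F_q^\times$ is surjective (the tame extension $K(\pi_{q-1})/K$ has degree exactly $q-1$, since $k$ is algebraically closed). Hence every scalar $\lambda\in\F_q^\times$ is realized on $V_{1/(q-1)}$ by the action of some $\sigma\in I$, so $W$ is stable under multiplication by $\F_q^\times$; combined with additive closure this forces $W$ to be an $\F_q$-subspace.

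The only genuinely delicate ingredient is matching the concrete character built from the $\pi_d$'s with the abstract character $\theta_d$ coming from the canonical isomorphism of \ref{galois-char}; this is a normalization issue resolved simply by choosing the $\pi_d$'s as a compatible system. Everything else reduces to elementary manipulations in the theory of tame ramification.
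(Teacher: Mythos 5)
The paper does not give its own proof of this proposition; it cites it to Serre \cite{Se}, Prop.~7, and your argument is precisely the standard one given there. It is correct. One minor remark: the character $\sigma\mapsto\sigma(\pi_d)/\pi_d$ is already independent of the choice of the $d$-th root $\pi_d$ of $\pi$, since any two such roots differ by an element of $\mu_d\subset k$, which is fixed by $I$; consequently the compatibility condition $\pi_{de}^{e}=\pi_d$ you impose is automatic at the level of characters and the ``normalization issue'' you flag as delicate is in fact not an issue at all. Everything else is clean and complete.
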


\subsection{}\label{defn-hw-index} Let $G$ be a BT-group over $S$. We define $h(G)$ to be the
valuation of the determinant of  a matrix of $\HW_G$ if $\dim(G^\vee)\geq 1$, and $h(G)=0$ if $\dim(G^\vee)=0$. We call $h(G)$
the \emph{Hasse invariant} of $G$.  

 {(a)} $h(G)$ does not
depend on the choice of the matrix representing $\HW_G$. Indeed, let
$c$ be the rank of $\Lie(G^\vee)$ over $A$,
 $\h\in \rM_{c\times c}(A)$ be a matrix  of $\HW_G$. Any other matrix representing $\HW_G$
can be written in the form $U^{-1}\cdot \h\cdot U^{(p)}$, where
$U\in \GL_{c}(A)$, $U^{-1}$ is the inverse of $U$, and $U^{(p)}$
is the matrix obtained by applying the Frobenius map of $A$ to the
coefficients of $U$.

{(b)} By  \ref{prop-ord}, the generic fiber $G_\eta$ is ordinary if
and only if $h(G)<\infty$; $G$ is ordinary over $T$ if and only
$h(G)=0$.

{(c)} Let $0\ra G'\ra G\ra G''\ra 0$ be a short exact sequence of
BT-groups over $T$, then we have $h(G)=h(G')+h(G'')$. Indeed, the
exact sequence of BT-groups induces a short exact sequence of Lie
algebras (cf. \cite{BBM} 3.3.2)
\[
0\ra\Lie(G''^\vee)\ra\Lie(G^\vee)\ra \Lie(G'^\vee)\ra 0,
\]
from which our assertion follows easily.

\begin{prop}\label{prop-Hasse} 
Let $G$ be a BT-group over $S$. Then we have $h(G)=h(G^\vee)$.
\end{prop}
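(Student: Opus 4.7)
The plan is to realise both $h(G)$ and $h(G^\vee)$ as the length of a single finite $A$-module inside the Dieudonn\'e module of $G$, which measures the failure of transversality between its Hodge and conjugate filtrations.

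I would first dispose of the non-ordinary case: by \ref{defn-hw-index}(b) and the Serre self-duality of the ordinary condition (noted right after \eqref{decom-ord}), $h(G)<\infty$ iff $G_\eta$ is ordinary iff $h(G^\vee)<\infty$, so in the non-ordinary case both sides are infinite and the claim is trivial. Assuming now that $G_\eta$ is ordinary, put $\M=\M(G)$, a free $A$-module of rank $c+d$ carrying the Hodge filtration $0\to\omega_G\to\M\to\Lie(G^\vee)\to 0$ and the conjugate filtration $0\to\Lie(G^\vee)^{(p)}\xra{\phi_G}\M\to\omega_G^{(p)}\to 0$ from \ref{pre-Dieud}. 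The commutative diagram \eqref{diag-Dieud} identifies $\widetilde{\varphi_G}$ with the composition $\Lie(G^\vee)^{(p)}\xra{\phi_G}\M\twoheadrightarrow\Lie(G^\vee)$ and $\psi_G$ with $\omega_G\hookrightarrow\M\twoheadrightarrow\omega_G^{(p)}$. Set $N=\omega_G+\phi_G(\Lie(G^\vee)^{(p)})\subset\M$. Since $\widetilde{\varphi_G}\otimes K$ is an isomorphism by \ref{prop-ord}(d), the intersection $\omega_G\cap\phi_G(\Lie(G^\vee)^{(p)})$ has vanishing generic rank; being torsion-free inside the free module $\M$ it must vanish, so $N$ is free of rank $c+d$ and $\M/N$ has finite length over $A$.

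The final step is to compute the length of $\M/N$ in two ways. Projecting $\M$ along its Hodge filtration identifies $\M/N\simeq\Lie(G^\vee)/\widetilde{\varphi_G}(\Lie(G^\vee)^{(p)})$, whose length is $\tv(\det\widetilde{\varphi_G})=h(G)$. Projecting along its conjugate filtration identifies $\M/N\simeq\omega_G^{(p)}/\psi_G(\omega_G)$, whose length is $\tv(\det\psi_G)$. The identification $\psi_G^{*}=\widetilde{\varphi_{G^\vee}}$ recalled in \ref{pre-Dieud} gives $\det\psi_G=\pm\det\widetilde{\varphi_{G^\vee}}$, so the second length equals $h(G^\vee)$, and the equality $h(G)=h(G^\vee)$ falls out. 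The only step that looks to me non-automatic is the transversality $\omega_G\cap\phi_G(\Lie(G^\vee)^{(p)})=0$; everything else is a direct unwinding of \eqref{diag-Dieud} and the standard determinant-equals-length formula for maps of free modules over the DVR $A$.
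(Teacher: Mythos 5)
Your proof is correct and follows essentially the same route as the paper: both arguments identify $h(G)$ and $h(G^\vee)$ with $\leng\bigl(\M(G)/(\phi_G(\Lie(G^\vee)^{(p)})+\omega_G)\bigr)$ by projecting along the Hodge and conjugate filtrations and using $\psi_G^* = \widetilde{\varphi_{G^\vee}}$. The extra care you take with the non-ordinary case and with transversality of the two filtrations is sound but not strictly needed, since the two length computations agree even when both are infinite (and $\det(V^t)=\det(V)$ exactly, so the $\pm$ sign you allow is superfluous).
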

\begin{proof} The proof is very similar to that of Lemma \ref{lemma-a-number}.   First, we have 
\[h(G)=\leng\bigl(\Lie(G^\vee)/\widetilde{\varphi_G}(\Lie(G^\vee)^{(p)})\bigr),\]
where $\widetilde{\varphi_G}$ is the linearization of $\varphi_G$, and ``$\leng$'' means the length of a finite $A$-module (note that this formulae holds even if $\dim(G^\vee)=0$). By the commutative diagram \eqref{diag-Dieud}, we have 
\[h(G)=\leng\M(G)/(\phi_G(\Lie(G^\vee)^{(p)})+ \omega_G).\]
On the other hand, by applying the functor $\Hom_A(\_,A)$ to the $A$-linear map $\widetilde{\varphi_{G^\vee}}:\Lie(G)^{(p)}\ra \Lie(G)$, we obtain a map $\psi_G:\omega_{G}\ra \omega_G^{(p)}$. If $U$ is a matrix of $\widetilde{\varphi_{G^\vee}}$, then the transpose of $U$, denoted by $U^t$, is a matrix of $\psi_G$. So we have 
\[h(G^\vee)=\tv(\det(U))=\tv(\det(U^t))=\leng\bigl(\omega_G^{(p)}/\psi_G(\omega_G)\bigr).\]
By diagram \ref{diag-Dieud}, we get
\[h(G^\vee)=\leng \M(G)/(\phi_G(\Lie(G^\vee)^{(p)})+ \omega_G)=h(G).\]

\end{proof}

\subsection{} Let $G$ be a BT-group over $S$, $c=\dim(G^\vee)$.
We put
\begin{equation}\label{Tate-mod}\rT_p(G)=\myprojlim G(n)(\Kb)\end{equation}
  the Tate
module of $G$, where $G(n)$ is the kernel of $p^n:G\ra G$.
 It is a free $\Z_p$-module of rank $\leq c$, and
  the equality holds if and only if
 the generic fiber $G_\eta$ is ordinary. The Galois group $I$ acts continuously on
 $\rT_p(G)$.
 We are interested in the image of the monodromy representation
 \begin{equation}\label{rep-trait}\rho:I=\Gal(\Ks/K)\ra
 \Aut_{\Z_p}(\rT_p(G)).\end{equation}
We denote by  \begin{equation}\label{rho-0}\rhob: I=\Gal(\Ks/K)\ra
\Aut_{\F_p}\bigl(G(1)(\Kb)\bigr)\end{equation}  its reduction mod
$p$.

\begin{thm}[Reformulation of Igusa's theorem]\label{thm-Igusa} Let $G$ be
a connected BT-group over  $S$ of height $2$ and dimension $1$. Then $G$ is versal \eqref{versal} if and only if 
$h(G)=1$; moreover, if this condition is satisfied,  the monodromy
representation $\rho:I\ra
\Aut_{\Z_p}(\rT_p(G))\simeq \Z_p^\times $
 is surjective.
\end{thm}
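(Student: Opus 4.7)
The statement naturally splits into a versality part and a monodromy-surjectivity part, to be handled separately.

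For the equivalence ``$G$ is versal $\Leftrightarrow h(G)=1$'', the plan is a direct appeal to Proposition \ref{prop-HW-versal}(ii) specialized to $d=c=1$. There $G$ is HW-cyclic for free, the Hasse-Witt map on the rank-one free $A$-module $\Lie(G^\vee)$ is a single scalar $-a\in A$, and $h(G)=\tv(a)$. The criterion ``$\{a\}$ is part of a regular system of parameters of $A=k[[\pi]]$'' collapses, since $\dim A=1$, to $\tv(a)=1$, i.e.\ $h(G)=1$; the last clause of the same proposition further identifies $G$ with the universal deformation of its special fiber $G_s$ in this case, which is the structural input I exploit below.

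For the monodromy surjectivity, assume $h(G)=1$. Since $G_\eta$ is ordinary of height $2$ and dimension $1$, $\rT_p(G)$ is free of rank $c=1$ over $\Z_p$, so $\Aut_{\Z_p}(\rT_p(G))\simeq \Z_p^\times$. I first obtain mod-$p$ surjectivity by invoking the tame monodromy result stated in Subsection 1.3 (to be established as Proposition \ref{prop-mono-trait} in Section 5), specialized to $c=1$: the action of $I$ on $G(1)(\Kb)$ is tame, $G(1)(\Kb)$ is a one-dimensional $\F_p$-vector space, and $I_t$ acts by the surjective character $\theta_{p-1}\colon I_t\twoheadrightarrow \F_p^\times$. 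Consequently $\overline{\rho}\colon I\to\F_p^\times$ is onto.

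To promote this to surjectivity onto $\Z_p^\times=\F_p^\times\times(1+p\Z_p)$ (with the obvious modification when $p=2$), the plan is to apply the elementary criterion of Section 6 (Lemma \ref{lemma-gp-1}): it reduces the task to the inductive statement that the mod-$p^n$ reduction $\rho_n\colon I\to(\Z/p^n\Z)^\times$ is surjective for every $n\geq 1$, equivalently that $\rho(I_p)$ has nontrivial image in each successive quotient $(1+p^{n-1}\Z_p)/(1+p^n\Z_p)\simeq \F_p$.

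The main obstacle is producing this wild ramification at every level, and my plan is to exploit the universality of $G$. Writing $G=G^\circ$ as a $1$-dimensional formal group $\Spf(A[[X]])$, one analyses the isogeny $[p^n]_G(X)\in A[[X]]$ via its Newton polygon, whose slopes are forced by the condition $h(G)=1$ through the Kodaira--Spencer/Hasse--Witt interplay underlying Proposition \ref{prop-HW-versal}. The roots of $[p^n]_G(X)$ contributing to the \'etale quotient of $G(n)_\eta$ generate a tower of totally ramified extensions $K\subset K(G(1)(\Kb))\subset\cdots\subset K(G(n)(\Kb))\subset\cdots$, and the universal-deformation property is precisely what rules out any ``unexpected'' factorization collapsing the ramification: each step is wildly ramified of degree $p$. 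This produces, for every $n\geq 1$, a $\sigma\in I_p$ whose image $\rho(\sigma)$ generates $(1+p^{n-1}\Z_p)/(1+p^n\Z_p)$, completing the induction.
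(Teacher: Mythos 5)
Your versality equivalence is exactly the paper's argument (Proposition~\ref{prop-HW-versal}(ii) with $d=c=1$). For the monodromy surjectivity your overall strategy --- a Newton-polygon analysis of $[p^n]_G$ on the formal group --- is also the one the paper takes, but there is a genuine gap where you explain why the Newton polygons come out right. You attribute the slopes to ``the Kodaira--Spencer/Hasse--Witt interplay underlying Proposition~\ref{prop-HW-versal}'' and suggest that ``the universal-deformation property is precisely what rules out any `unexpected' factorization''. Neither is how the computation actually works: once $h(G)=1$ is granted, neither Kodaira--Spencer nor universality is invoked again. What is needed, and what the paper uses, is Katz's normalization of the formal parameter \cite[3.6]{Ka}: one chooses $X$ with $[\xi](X)=\xi X$ for all $(p-1)$-th roots of unity $\xi$, which forces $[p](X)=a_1X^p+\alpha X^{p^2}+\sum_{m\geq 2}c_m X^{p(1+m(p-1))}$ with $\tv(a_1)=h(G)=1$ (by \cite[3.6.1, 3.6.5]{Ka}) and with $\tv(\alpha)=0$ because $G$ has height $2$. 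Writing $[p^n](X)=V^{(p^{n-1})}\circ\cdots\circ V(X^{p^n})$ and solving the resulting tower $V^{(p^{i-1})}(y_i)=y_{i-1}$, a direct Newton-polygon computation gives $\tv(y_i)=1/(p^{i-1}(p-1))$ and hence $[K_n:K]\geq p^{n-1}(p-1)$. This is a coefficient computation, not a rigidity argument: there is no ``unexpected factorization'' to exclude once the valuations of the two leading coefficients are pinned down, and conversely, without a substitute for Katz's normalization lemma your outline has nothing to pin them down. That is the step that would fail as written.

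A minor structural remark: the paper does not route through Proposition~\ref{prop-mono-trait}(i) and Lemma~\ref{lemma-gp-1} separately; the single computation above already gives $[K_n:K]\geq p^{n-1}(p-1)$ for every $n$, which subsumes the tame ($n=1$) case. Your two-stage reduction is logically fine but adds machinery, and the $c=1$ case of Proposition~\ref{prop-mono-trait}(i) is itself proved by the same kind of Newton-polygon analysis, so nothing is gained by splitting the argument.
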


\begin{proof}
  Since  $\Lie(G^\vee)$ is an $\cO_S$-module free of rank 1,  the
  condition that $h(G)=1$ is equivalent to that any matrix of
  $\varphi_G$  is represented  by a uniformizer of $A$.  Hence the first part of this theorem  follows from Proposition \ref{prop-HW-versal}(ii).

We follow \cite[Thm 4.3]{Ka} to prove the surjectivity of $\rho$ under the assumption that $h(G)=1$.
For each integer $n\geq 1$, let
$$\rho_n:I\ra \Aut_{\Z/p^n\Z}(G(n)(\Kb))\simeq
(\Z/p^n\Z)^\times$$ be the reduction mod $p^n$ of $\rho$, $K_n$ be
the subfield of $\Ks$ fixed by the kernel of $\rho_n$. Then $\rho_n$
induces an injective homomorphism $\Gal(K_n/K)\ra
(\Z/p^n\Z)^\times$. By taking projective limits, we are  reduced to proving
the surjectivity of $\rho_n$ for every $n\geq1$. It suffices to
verify that $$|\im(\rho_n)|=[K_n:K]\geq p^{n-1}(p-1)$$ (then the
equality holds automatically).

We regard $G$ as a formal group over $S$. Then by \cite[3.6]{Ka},
there exists a parameter $X$ of the formal group $G$ normalized by
the condition that  $[\xi](X)=\xi(X)$ for all $(p-1)$-th root of unity
$\xi\in \Z_p$. For such a parameter, we have
\[[p](X)=a_1X^{p}+\alpha X^{p^2}+\sum_{m\geq 2}c_mX^{p(1+m(p-1))}\in A[[X]],\]
where we have $\tv(a_1)=h(G)=1$ by \cite[3.6.1 and 3.6.5]{Ka}, and
$\tv(\alpha)=0$, as $G$ is of height $2$. For each integer $i\geq
0$, we put
\[V^{(p^i)}(X)=a_1^{p^i}X+\alpha^{p^i} X^p+\sum_{m\geq 2}c^{p^i}_mX^{1+m(p-1)}\in A[[X]];\]
then we have $[p^n](X)=V^{(p^{n-1})}\circ V^{(p^{n-2})}\circ\cdots
\circ V(X^{p^n})$. Hence each point of $G(n)(\Kb)$ is given by a
sequence $y_1,\cdots, y_n\in \Ks$ (or simply an element $y_n\in
\Ks$) satisfying the equations
\[\begin{cases}V(y_1)=a_1y_1+\alpha y_1^p+\cdots=0;\\
V^{(p)}(y_2)=a_1^py_2+\alpha^p y_2^p+\cdots=y_1;\\
\vdots\\
V^{(p^{n-1})}(y_n)=a_1^{p^{n-1}}y_{n}+\alpha^{p^{n-1}}y_{n}^{p}+\cdots=y_{n-1}.\end{cases}\]
Let $y_n\in \Ks$ be such that $y_1\neq0$. By considering the Newton
polygons of the equations above, we verify that
$$\tv(y_i)=\frac{1}{p^{i-1}(p-1)}\quad \quad \text{for }1\leq i\leq n.$$ In particular, the
ramification index $e(K_n/K)$ is at least $p^{n-1}(p-1)$. By the
definition of $K_n$, the Galois group $\Gal(\Ks/K_n)$ must fix
$y_n\in \Ks$, \ie $K_n$ is an extension of $K(y_n)$. Therefore, we
have $[K_n:K]\geq [K(y_n):K]\geq e(K(y_n)/K)\geq p^{n-1}(p-1)$.
\end{proof}

\begin{prop}\label{prop-mono-trait} Let $G$ be a HW-cyclic BT-group
 over $S$ of height $c+d$ and dimension $d$ such that $G\otimes K$ is ordinary, 
$$\h=\begin{pmatrix}0& 0 &\cdots&0 &-a_1\\
 1&0&\cdots &0 &-a_2\\
 0&1 &\cdots &0 &-a_3\\
 \vdots&&\ddots&& \vdots\\
0&0&\cdots &1&-a_{c} \end{pmatrix}$$ be a matrix of $\HW_G$. Put 
 $q=p^{c}$, $a_{c+1}=1$, and $P(X)=\sum_{i=0}^c a_{i+1}X^{p^i}\in A[X]$.

 \emph{(i)} Assume that $G$ is connected and the Hasse invariant $h(G)=1$. Then the representation $\rhob$ \eqref{rho-0} is
 tame,  $G(1)(\Kb)$
 is endowed with the structure of an $\F_{q}$-vector space of
 dimension $1$, and the induced action of $I_t$
  is given by the character $\theta_{q-1}:I_t\ra
  \F_{q}^\times$.

  \emph{(ii)} Assume that $c>1$, $\tv(a_{i})\geq 2$ for $1\leq i\leq c-1$ and $\tv(a_{c})=1$.
  Then the order of  $\im (\rhob)$ is divisible by $p^{c-1}(p-1)$.

\emph{(iii)} Put $i_0=\min_{0\leq i\leq c}\{i;\tv(a_{i+1})=0\}$. Assume that there exists $\alpha\in k$ such that
 $\tv(P(\alpha))=1$. Then we have $i_0\leq c-1$ and the order of
 $\im(\rhob)$ is divisible by $p^{i_0}$.
\end{prop}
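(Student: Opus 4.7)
The plan centers on Lemma 4.4(i): since $\Ker F_G$ is infinitesimal, $G(1)(\Kb) = (\Ker V_G)(\Kb)$ is identified $I$-equivariantly with the additive group of roots of $P(X)$ in $\Kb$. The ordinarity of $G_\eta$ gives $\det(\h) = (-1)^c a_1 \neq 0$ (by cofactor expansion along the first row of $\h$), so $a_1 \neq 0$, $P$ is separable, and its roots all lie in $\Ks$. All three parts would then follow from Newton polygon analyses of $P$ or of a translate $P(Y) + P(\alpha)$, combined with the tame ramification input of Proposition 5.2.

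For (i), $h(G) = \tv(a_1) = 1$, and the connectedness of $G$ together with Lemma 4.4(ii) forces $\tv(a_i) \geq 1$ for all $1 \leq i \leq c$. The Newton polygon of $P(X)$ then reduces to a single segment from $(1, 1)$ to $(q, 0)$ with $q = p^c$, so every nonzero root has valuation exactly $1/(q-1)$. The induced map $y \mapsto \overline{y}$ is an $I$-equivariant, $\F_p$-linear injection $G(1)(\Kb) \hookrightarrow V_{1/(q-1)}$ (injectivity is clear: two elements with the same image would differ by an element of $G(1)(\Kb)$ with valuation exceeding $1/(q-1)$, hence zero). Proposition 5.2 supplies the tameness ($I_p$ acts trivially on $V_{1/(q-1)}$) and asserts that any $I$-stable $\F_p$-subspace of $V_{1/(q-1)}$ is in fact an $\F_q$-subspace on which $I_t$ acts via $\theta_{q-1}$. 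Since $|G(1)(\Kb)| = q$, the image is one-dimensional over $\F_q$, yielding (i).

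For (ii), the assumptions place all points $(p^j, \tv(a_{j+1}))$ with $j < c-1$ strictly above the segment from $(p^{c-1}, 1)$ to $(p^c, 0)$ of slope $-1/(p^{c-1}(p-1))$, so this segment appears in the Newton polygon of $P(X)$ and yields a root $y \in \Ks$ with $\tv(y) = 1/(p^{c-1}(p-1))$. Since the value group of $K(y)$ is $\tfrac{1}{e}\Z$ for $e = e(K(y)/K)$, we obtain $p^{c-1}(p-1) \mid e(K(y)/K)$. As $y$ is fixed by $\Ker \rhob$, the field $K(y)$ is contained in $K_1 := (\Ks)^{\Ker \rhob}$, hence $p^{c-1}(p-1) \mid [K_1 : K] = |\im \rhob|$.

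For (iii), if $i_0 = c$ then every $a_i$ with $1 \leq i \leq c$ lies in the maximal ideal, so $P(\alpha) \equiv \alpha^{p^c} \pmod{\pi}$; the hypothesis $\tv(P(\alpha)) = 1$ then forces $\alpha = 0$ and hence $P(\alpha) = 0$, a contradiction. Thus $i_0 \leq c - 1$. Exploiting the additivity $P(X + Y) = P(X) + P(Y)$ valid in characteristic $p$, the plan is to find a root of $P$ of the form $x_0 = \alpha + y_0$ by solving $P(Y) + P(\alpha) = 0$ in $\Ks$. The Newton polygon of $P(Y) + P(\alpha)$ in $Y$ contains the segment from $(0, 1)$ to $(p^{i_0}, 0)$ of slope $-1/p^{i_0}$: for $0 \leq j < i_0$ the intermediate point $(p^j, \tv(a_{j+1}))$ satisfies $\tv(a_{j+1}) \geq 1 > 1 - p^{j - i_0}$ by the definition of $i_0$, while $(p^{i_0}, 0)$ is an endpoint. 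This produces $y_0 \in \Ks$ with $\tv(y_0) = 1/p^{i_0}$; then $x_0 := \alpha + y_0 \in G(1)(\Kb)$ and $K(x_0) = K(y_0)$, so the argument of (ii) gives $p^{i_0} \mid e(K(y_0)/K) \mid |\im \rhob|$. The main delicate points are the Newton polygon bookkeeping and the separability of $P$ (and of $P(Y) + P(\alpha)$), both of which rest on $a_1 \neq 0$ coming from the ordinarity of $G_\eta$.
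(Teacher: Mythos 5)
Your proposal is correct and follows essentially the same approach as the paper's proof: identify $G(1)(\Kb)$ with the roots of the additive polynomial $P$ via Lemma~\ref{lemma-HW-V}, read off valuations from the Newton polygon, and feed the resulting ramification index into Prop.~\ref{prop-V-alpha} (for tameness in (i)) or into the fixed field of $\Ker\rhob$ (for (ii), (iii)). The one place you diverge is the injectivity of $\psi$ in part (i): the paper computes explicitly the images of the roots (by substituting $x_0 y$ and normalizing), while you argue more directly that if two roots have the same image then their difference is a root of valuation $>1/(q-1)$, forcing it to be zero since all nonzero roots have valuation exactly $1/(q-1)$. Your argument is cleaner and equally valid, as the set of roots of $P$ is an additive group in characteristic $p$.
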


\begin{proof}
  Since $G$ is generically ordinary, we have $a_1\neq 0$ by \ref{prop-ord}(d). Hence $P(X)\in K[X]$ is a separable polynomial. By  \ref{lemma-HW-V},
 $G(1)(\Kb)\simeq (\Ker V_G)(\Ks)$ is identified with the additive group consisting of the roots of $P(X)$ in $\Ks$.

(i) By definition of the Hasse invariant, we have  $\tv(a_1)=h(G)=1$. By \ref{lemma-HW-V}(ii), the assumption that $G$ is connected 
is equivalent to saying $\tv(a_i)\geq 1$ for $1\leq i\leq
c$. From the Newton polygon of $P(X)$, we deduce that all the
non-zero roots of $P(X)$ in $\Ks$ have the same valuation $1/(q-1)$.
We denote by
 \[
 \psi: G(1)(\Kb)\ra V_{1/(q-1)}
 \] the  map
which sends each root $x\in \Ks$ of $P(X)$
 to the class of $x$ in $V_{1/(q-1)}=\m_{1/(q-1)}/\m^+_{1/(q-1)}$ \eqref{defn-V_alpha}. We
remark that $G(1)(\Kb)$ is an $\F_p$-vector space of dimension
$c$. Hence $G(1)(\Kb)$ is automatically of dimension  $1$ over
$\F_q$ once we know it is an $\F_q$-vector space. By
\ref{prop-V-alpha}, it suffices to show that $\psi$ is an injective
$I$-equivariant homomorphism of groups.  By
\ref{lemma-HW-V}(i), $\psi$ is obviously an $I$-equivariant
homomorphism of groups. Let $x_0$ be a root of $P(X)$, and put
$Q(y)=P(x_0y)$. Then the polynomial $Q(y)$ has the form
$Q(y)=x_0^{q}Q_1(y)$, where
$$Q_1(y)=y^{q}+b_{c}y^{p^{c-1}}+\cdots +b_2 y^{p}+b_1y$$
with $b_i=a_i/x_0^{(q-p^{i-1})}\in \Ks$. We have $\tv(b_i)> 0$ for
$2\leq i\leq c$ and $\tv(b_1)=0$. Let $\overline{b}_1$ be the
class of $b_1$ in the residue field $k=\m_0/\m_0^+$. Then the images
of the roots of $P(X)$ in $V_{1/(q-1)}$ are $x_0
\overline{b}_1^{1/(q-1)}\zeta,$ where $\zeta$ runs over the finite
field $\F_q$. Therefore, $\psi$ is injective.

(ii) By computing the slopes of the Newton polygon of $P(X)$, we see
that $P(X)$ has $p^{c-1}(p-1)$ roots of valuation
$1/(p^{c}-p^{c-1})$. Let $L$ be the sub-extension of $\Ks$
obtained  by adding to $K$  all the roots of $P(x)$. Then the
ramification index $e(L/K)$ is divisible by
$p^{c-1}(p-1)$. Let $\widetilde{L}$ be the
sub-extension of $\Ks$ fixed by the kernel of $\rhob$ \eqref{rho-0}.
The Galois group $\Gal(\Ks/\widetilde{L})$ fixes the roots of $P(x)$
by definition. Hence we have $L\subset \widetilde{L}$, and $|\im
(\rhob)|=[\widetilde{L}:K]$ is divisible by $[L:K]$;  in particular,
it is divisible by $p^{c-1}(p-1)$.

(iii) Note that the relation $ i_0\leq c-1$ is equivalent to saying that $G$ is not connected by \ref{lemma-HW-V}(ii). Assume conversely 
$i_0=c$, \ie  $G$ is connected. Then we would have
$${P}(X)\equiv X^{q} \mod (\pi A[X]).$$ But
$\tv(P(\alpha))=1$ implies that $\alpha^{p^{c}}\in \pi A$, \ie
$\alpha=0$; hence we would have $P(\alpha)=0$, which contradicts the
condition $\tv(P(\alpha))=1$.

We put $Q(X)=P(X+\alpha)=P(X)+P(\alpha)$. As $\tv(P(\alpha))=1$,
then $(0,1)$ and $(p^{i_0}, 0)$ are the first two break points of
the Newton polygon of $Q(X)$. Hence there exists $p^{i_0}$ roots of
$Q(X)$ of valuation $1/p^{i_0}$. Let $L$ be the subextension of $K$
in $\Ks$ generated by the roots of $P(X)$. The ramification index
$e(L/K)$ is divisible by $p^{i_0}$. As in the proof of (ii), if $\widetilde{L}$ is the
subextension of $\Ks$ fixed by the kernel of $\rhob$, then it is an
extension of $L$. Therefore, we have $|\im(\rhob)|=[\widetilde{L}:K]$ is divisible by
$[L:K]$, and in particular, divisible by $p^{i_0}$.
\end{proof}

\subsection{} Let $G$ be a BT-group over $S$ with connected part $G^\circ$, and
 \'etale part $G^\et$ of height $r$.
  We have a canonical exact
sequence of $I$-modules
\begin{equation}\label{exseq-Tate-mod-p}
0\ra G^\circ(1)(\Kb)\ra G(1)(\Kb)\ra G^\et(1)(\Kb)\ra 0
\end{equation} giving  rise to a class  $\cb\in
\Ext^1_{\F_p[I]}(G^\et(1)(\Kb), G^\circ(1)(\Kb))$, which vanishes if
and only if \eqref{exseq-Tate-mod-p} splits. Since $I$ acts
trivially on $G^\et(1)(\Kb)$, we have an isomorphism of $I$-modules
$G^\et(1)(\Kb)\simeq\F_p^r$.  Recall that for any $\F_p[I]$-module
$M$, we have a canonical isomorphism (\cite{CL} Chap.VII, \S2)
\begin{equation*}\Ext^1_{\F_p[I]}(\F_p,M)\simeq
H^1(I,M).\end{equation*} Hence we deduce that
\begin{equation}\label{class-cb}\overline{C}\in
\Ext^1_{\F_p[I]}(G^\et(1)(\Kb), G^\circ(1)(\Kb))\simeq H^1(I,
G^\circ(1)(\Kb))^{r}.\end{equation}

\begin{prop}\label{prop-class-coh} Let $G$ be a HW-cyclic BT-group over $S$ such that
$h(G)=1$,  $\overline{\rho}$ \eqref{rho-0} be the representation
of $I$ on $G(1)(\Kb)$. Then the cohomology class $\overline{C}$
 does not vanish if and only if the order of the group
$\im(\rhob)$ is divisible by $p$.
\end{prop}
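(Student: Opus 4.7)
The plan is to pin down the tame/wild structure of $\rhob$ using Proposition \ref{prop-mono-trait}(i) applied to the connected part $G^\circ$, and then translate the vanishing of $\cb$ into a statement about field extensions via inflation--restriction. First I would verify the hypotheses of \ref{prop-mono-trait}(i) for $G^\circ$: by Lemma \ref{lemma-cyclic}(ii) it is HW-cyclic, and the additivity of the Hasse invariant under short exact sequences (\ref{defn-hw-index}(c)), combined with $h(G^\et)=0$, yields $h(G^\circ)=h(G)=1$. Consequently $G^\circ(1)(\Kb)$ is a one-dimensional $\F_{q'}$-vector space (with $q'=p^{c'}$, $c'=\dim(G^{\circ\vee})$) on which $I$ acts tamely via $\theta_{q'-1}:I_t\ra\F_{q'}^\times$; in particular the image of $I$ in $\Aut_{\F_p}(G^\circ(1)(\Kb))$ has order $q'-1$, prime to $p$.

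Next, let $\widetilde{L}\subset\Ks$ be the fixed field of $\ker(\rhob)$ and $L_1\subset\widetilde{L}$ the fixed field of the kernel of the representation $I\ra\Aut(G^\circ(1)(\Kb))$. Then $[L_1:K]$ is prime to $p$ by the previous step, while $\Gal(\widetilde{L}/L_1)$ embeds into the group of automorphisms of $G(1)(\Kb)$ which fix $G^\circ(1)(\Kb)$ pointwise and act trivially on $G^\et(1)(\Kb)$; this latter group is canonically isomorphic to $\Hom(G^\et(1)(\Kb),G^\circ(1)(\Kb))$, an $\F_p$-vector space. Hence $[\widetilde{L}:L_1]$ is a power of $p$, and $p\mid|\im(\rhob)|$ if and only if $\widetilde{L}\neq L_1$.

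The key translation is then as follows. Since $\Gal(\Ks/L_1)$ acts trivially on both $G^\circ(1)(\Kb)$ (by definition of $L_1$) and $G^\et(1)(\Kb)$ (since $I$ already does), the restriction of \eqref{exseq-Tate-mod-p} to $\Gal(\Ks/L_1)$ splits if and only if $\Gal(\Ks/L_1)$ acts trivially on the middle term $G(1)(\Kb)$, which means exactly $\widetilde{L}=L_1$. Translating the splitting into cohomology, this is the condition that the image of $\cb$ under the restriction map
\[
\mathrm{res}: H^1(I,G^\circ(1)(\Kb))^r\lra H^1(\Gal(\Ks/L_1),G^\circ(1)(\Kb))^r
\]
vanishes. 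To conclude, I would invoke the inflation--restriction exact sequence
\[
0\lra H^1(\Gal(L_1/K),G^\circ(1)(\Kb))\lra H^1(I,G^\circ(1)(\Kb))\xra{\mathrm{res}} H^1(\Gal(\Ks/L_1),G^\circ(1)(\Kb)).
\]
Because $|\Gal(L_1/K)|$ is prime to $p$ while the coefficient module $G^\circ(1)(\Kb)$ is a $p$-group, the left-hand term vanishes, so $\mathrm{res}$ is injective; therefore $\mathrm{res}(\cb)\neq 0$ if and only if $\cb\neq 0$, which together with the preceding paragraph finishes the proof. Once \ref{prop-mono-trait}(i) has been applied to $G^\circ$ no real obstacle remains; the whole argument hinges on the prime-to-$p$ order of the inertia image on $G^\circ(1)(\Kb)$, which is exactly what makes the inflated contribution to $H^1$ disappear.
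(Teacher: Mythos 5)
Your proof is correct and follows essentially the same strategy as the paper's: both arguments reduce to the inflation--restriction sequence for the subgroup $I_1=\ker\bigl(I\to\Aut(G^\circ(1)(\Kb))\bigr)$, using \ref{prop-mono-trait}(i) applied to $G^\circ$ (with $h(G^\circ)=h(G)=1$ coming from \ref{lemma-cyclic}(ii) and \ref{defn-hw-index}(c)) to identify the quotient $I/I_1$ as a cyclic prime-to-$p$ group. The only genuine difference is in the two auxiliary ingredients: to kill the inflation term $H^1(I/I_1,G^\circ(1)(\Kb))$ the paper invokes its special-purpose Lemma \ref{lemma-comm-coh} (vanishing of $H^1$ of a commutative group acting through a nontrivial character on a one-dimensional vector space), whereas you use the standard fact that $H^1$ of a group of prime-to-$p$ order with coefficients in a $p$-group vanishes; and where the paper exhibits an explicit element $\rhob(h)$ of order $p$ from a nonvanishing cocycle value $\cb(h)$, you instead package the same information structurally as the tower $K\subset L_1\subset\widetilde L$ with $[L_1:K]$ prime to $p$ and $\widetilde L/L_1$ a $p$-extension, reading off the divisibility of $|\im(\rhob)|$ by $p$ from $\widetilde L\neq L_1$. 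Both versions are clean; yours is somewhat more symmetric in that it yields the biconditional in a single pass via injectivity of restriction, while the paper treats the two implications separately.
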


First, we prove the following result on cohomology of groups.

\begin{lemma}\label{lemma-comm-coh} Let $F$ be a field,
 $\Gamma$ be a commutative  group,
and  $\chi:\Gamma\ra F^\times$ be a non-trivial character of $\Gamma$. We denote
by $F(\chi)$ an $F$-vector space of dimension $1$ endowed with an
action
 of $\Gamma$ given by $\chi$. Then we have $H^1(\Gamma, F(\chi))=0$.
\end{lemma}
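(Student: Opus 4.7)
The plan is to show directly that every $1$-cocycle $c\colon\Gamma\to F(\chi)$ is a coboundary, by producing an explicit element $v\in F(\chi)$ with $c(\gamma)=\chi(\gamma)v-v$ for all $\gamma\in\Gamma$.

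First, since $\chi$ is non-trivial, I fix some $\gamma_0\in\Gamma$ with $\chi(\gamma_0)\neq 1$, so that $1-\chi(\gamma_0)\in F^\times$. Next, I use the cocycle relation
\[
c(\gamma\gamma')=c(\gamma)+\chi(\gamma)\,c(\gamma'),
\]
together with the commutativity of $\Gamma$ (so $\gamma\gamma_0=\gamma_0\gamma$), to compute $c(\gamma\gamma_0)$ in two ways. Equating the two expressions yields
\[
c(\gamma)+\chi(\gamma)\,c(\gamma_0)=c(\gamma_0)+\chi(\gamma_0)\,c(\gamma),
\]
which rearranges to
\[
\bigl(1-\chi(\gamma_0)\bigr)\,c(\gamma)=\bigl(1-\chi(\gamma)\bigr)\,c(\gamma_0).
\]

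Then I set
\[
v=\frac{c(\gamma_0)}{\chi(\gamma_0)-1}\in F(\chi),
\]
which is well-defined by the choice of $\gamma_0$. A direct substitution gives $c(\gamma)=\bigl(\chi(\gamma)-1\bigr)v=\chi(\gamma)v-v$ for every $\gamma\in\Gamma$, i.e.\ $c=\partial v$ is a coboundary. Since every $1$-cocycle is a coboundary, we conclude $H^1(\Gamma,F(\chi))=0$.

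There is essentially no obstacle here: the argument is purely formal, relies only on the cocycle identity and the commutativity of $\Gamma$, and works uniformly in the discrete and the profinite/continuous setting (since $v$ is a single element, no continuity issue arises). The only thing to be careful about is the sign convention in the definition of $v$.
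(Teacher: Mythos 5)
Your proof is correct and follows essentially the same route as the paper: both exploit the cocycle identity together with commutativity of $\Gamma$ to derive the relation $(\chi(\gamma_0)-1)c(\gamma)=(\chi(\gamma)-1)c(\gamma_0)$ and then exhibit the coboundary element explicitly. The only cosmetic difference is that you fix one $\gamma_0$ with $\chi(\gamma_0)\neq 1$ from the start, whereas the paper first observes that $c(g)/(\chi(g)-1)$ is constant over all $g$ with $\chi(g)\neq 1$; the two formulations are interchangeable.
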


\begin{proof}
Let $C$ be a  $1$-cocycle of $\Gamma$ with values in $F(\chi)$. We
prove that $C$ is a $1$-coboundary. For any $g,h\in \Gamma$, we have
\begin{align*}C(gh)=C(g)+{\chi}(g)C(h),\\
C(hg)=C(h)+{\chi}(h)C(g).\end{align*}  Since $\Gamma$ is
commutative, it follows from the relation $C(gh)=C(hg)$ that
\begin{equation}\label{formula-1}({\chi}(g)-1)C(h)=({\chi}(h)-1)C(g).\end{equation} If
${\chi}(g)\neq 1$ and ${\chi}(h)\neq 1$, then
$$\frac{1}{{\chi}(g)-1}C(g)=\frac{1}{{\chi}(h)-1}C(h).$$
Therefore, there exists $x\in \F_q(\chib)$ such that
$C(g)=({\chi}(g)-1)x$ for all $g\in \Gamma$ with ${\chi}(g)\neq 1$.
If ${\chi}(g)=1$, we have also $C(g)=0=({\chi}(g)-1)x$ by
\eqref{formula-1}. This shows that $C$ is a 1-coboundary.
\end{proof}

\begin{proof}[Proof of $\ref{prop-class-coh}$]
By  \ref{lemma-cyclic}(ii) and \ref{defn-hw-index}(c), the connected
part $G^\circ$ of $G$ is HW-cyclic with $h(G^\circ)=h(G)=1$.
Assume that $\rT_p(G^\circ)$ has rank $\ell$ over $\Z_p$, and
$\rT_p(G^\et)$ has rank $r$. Then by \ref{prop-mono-trait}(a),
$G^\circ(1)(\Kb)$ is an $\F_q$-vector space of dimension 1 with
$q=p^\ell$, and the action of $I$ on $G^\circ(1)(\Kb)$ factors through
the character $\chib:I\ra I_t\xra{\theta_{q-1}} \F_q^{\times}$. We
write  $G^\circ(1)(\Kb)=\F_q(\chib)$ for short. If the cohomology class
$\overline{C}$ is zero, then the exact sequence
\eqref{exseq-Tate-mod-p} splits, \ie we  have an isomorphism of
Galois modules $G(1)(\Kb)\simeq \F_q(\chi)\oplus \F_p^r$. It is
clear that the group $\im(\rhob)$ has order $q-1$.

Conversely, if the cohomology class $\cb$ is not zero, we will show
that there exists an element in $\im(\rhob)$ of order $p$. We choose
a basis adapted to the exact sequence \eqref{exseq-Tate-mod-p} such
that the action of $g\in I$ is given by
\begin{equation}\label{formula-action}\rhob(g)=\begin{pmatrix}\chib(g)&\cb(g)\\
0&\mathbf{1}_r\end{pmatrix},\end{equation}
 where $\mathbf{1}_r$ is the unit matrix of
type $(r,r)$ with coefficients in $\F_p$, and the map
$g\mapsto\cb(g)$ gives rise to a 1-cocycle representing the
cohomology class $\cb$. Let $I_1$ be the kernel of $\chib:I\ra
\F_q^\times$,  $\Gamma$ be the quotient $I/I_1$, so $\chib$ induces
an isomorphism $\chib: \Gamma\xra{\sim} \F_q^\times$. We have an
exact sequence
\[0\ra H^1(\Gamma,\F_q(\chib))^r\xra{\mathrm{Inf}}H^1(I,\F_q(\chib))^r\xra{\mathrm{Res}}H^1(I_1,\F_q(\chib))^r,\]
where ``Inf'' and ``Res'' are respectively the inflation and
restriction homomorphisms in group cohomology. Since $H^1(\Gamma,
\F_q(\chib))^r=0$ by \ref{lemma-comm-coh}, the restriction of the
cohomology class $\cb$ to $H^1(I_1,\F_q(\chib))^r$ is non-zero.
Hence there exists $h\in I_1$ such that $\cb(h)\neq 0$. As we have
$\chib(h)=1$, then
\[\rhob(h)^{p}=\begin{pmatrix}\mathbf{1}_\ell&p\cb(h)\\
0&\mathbf{1}_r\end{pmatrix}=\mathbf{1}_{\ell+r}.\] Thus the order of
$\rhob(h)$ is  $p$.
\end{proof}

\begin{cor}\label{cor-non-zero-coh} Let $G$ be a HW-cyclic BT-group over $S$,
$$\h=\begin{pmatrix}0& 0 &\cdots&0 &-a_1\\
 1&0&\cdots &0 &-a_2\\
 0&1&\cdots &0 &-a_3\\
 \vdots&&\ddots&& \vdots\\
0&0&\cdots &1&-a_{c} \end{pmatrix}$$ be
 a matrix
of $\HW_G$, $P(X)=X^{p^{c}}+a_{c}X^{p^{c-1}}+\cdots+a_1X\in
A[X]$. If $h(G)=1$ and if there exists $\alpha\in k\subset A$ such
that $\tv(P(\alpha))=1$, then the cohomology class \eqref{class-cb}
is not zero, \ie the extension
 of  $I$-modules \eqref{exseq-Tate-mod-p} does not split.
\end{cor}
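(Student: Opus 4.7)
The proof should be a direct combination of Proposition \ref{prop-mono-trait}(iii) with Proposition \ref{prop-class-coh}. The plan is to show that the hypothesis $h(G)=1$ forces the index $i_0$ of \ref{prop-mono-trait} to be at least $1$, so that combined with the bound $i_0\leq c-1$ coming from the existence of $\alpha$, one gets $p\mid|\im(\rhob)|$, at which point \ref{prop-class-coh} delivers the non-vanishing of $\overline{C}$.

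First I would translate $h(G)=1$ into a valuation condition on the coefficient $a_1$. The matrix $\h$ is a companion-type matrix, and cofactor expansion along its first row (the only non-zero entry of which is $-a_1$ in the last column, with cofactor the determinant of a lower-triangular block with $1$'s on the diagonal) yields $\det(\h)=(-1)^c a_1$. By \ref{defn-hw-index}, $h(G)=\tv(\det\h)=\tv(a_1)$, so the hypothesis gives $\tv(a_1)=1$. In particular, $a_1$ is not a unit, which means $i_0=\min_{0\leq i\leq c}\{i;\tv(a_{i+1})=0\}\geq 1$.

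Next I would verify that Proposition \ref{prop-mono-trait}(iii) applies. It requires $G$ HW-cyclic (given) and $G\otimes K$ ordinary; the latter follows from $h(G)=1<\infty$ via \ref{defn-hw-index}(b). Applying it with the given $\alpha\in k$ such that $\tv(P(\alpha))=1$, we obtain $i_0\leq c-1$ together with the divisibility $p^{i_0}\mid|\im(\rhob)|$. Combining with the lower bound $i_0\geq 1$ established above, we get $1\leq i_0\leq c-1$, hence in particular $p\mid|\im(\rhob)|$.

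Finally, since the hypotheses ($G$ HW-cyclic, $h(G)=1$) of Proposition \ref{prop-class-coh} are satisfied, its conclusion states that $\overline{C}\neq 0$ is equivalent to $p\mid|\im(\rhob)|$; this completes the proof. There is no real obstacle: once one observes that $h(G)=1$ is equivalent to $\tv(a_1)=1$ and hence to $i_0\geq 1$, the corollary reduces to a two-line combination of the two preceding propositions.
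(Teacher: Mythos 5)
Your proposal is correct and follows exactly the paper's own argument: observe that $h(G)=\tv(\det\h)=\tv(a_1)=1$ forces $i_0\geq 1$, invoke Proposition \ref{prop-mono-trait}(iii) to get $p^{i_0}$ dividing $|\im(\rhob)|$, and conclude via Proposition \ref{prop-class-coh}. The paper's proof is just a terser version of the same combination, leaving the cofactor expansion and the ordinarity check implicit.
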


\begin{proof} Since $\tv(a_1)=h(G)=1$, the integer $i_0$ defined in
  \ref{prop-mono-trait}(iii) is at least $1$. Then the corollary follows
  from \ref{prop-mono-trait}(iii) and \ref{prop-class-coh}.
\end{proof}

\section{Lemmas in Group Theory}
In this section, we fix a  prime number $p\geq 2$ and an integer $n\geq 1$.

\subsection{}  Recall that the general linear group $\GL_n(\Z_p)$ admits a natural exhaustive
decreasing filtration by  normal subgroups
\[\GL_n(\Z_p)\supset 1+p\rM_n(\Z_p)\supset \cdots\supset 1+p^m\rM_n(\Z_p)\supset\cdots,\]
where $\rM_n(\Z_p)$ denotes the ring of matrix of type $(n,n)$ with
coefficients in $\Z_p$.  We endow $\GL_n(\Z_p)$  with the topology
for which $(1+p^m\rM_n(\Z_p))_{m\geq 1}$ form a fundamental system
of neighborhoods of $1$. Then $\GL_n(\Z_p)$ is a complete and
separated topological group.

\subsection{}Let  $\fG$ be a profinite group,  $\rho:\fG\ra
\GL_n(\Z_p)$ be a continuous homomorphism of topological groups. By
taking inverse images, we obtain a decreasing filtration $(F^m\fG,
m\in \Z_{\geq 0})$ on $\fG$ by open normal subgroups:
\[F^0\fG=\fG, \quad \text{and }\quad
F^m\fG=\rho^{-1}(1+p^m\rM_n(\Z_p))\;\, \text{for $m\geq 1$}.\]
Furthermore, the homomorphism $\rho$ induces a sequence of injective
homomorphisms of finite groups
\begin{align}\label{phi-gradue}&\rho_0\colon F^0\fG/F^1\fG\lra \GL_n(\F_p)\\
&\rho_m\colon F^m\fG/F^{m+1}\fG\ra \rM_n(\F_p),\quad \text{for
}m\geq 1.\end{align}

\begin{lemma}\label{lemma-gp-1} The homomorphism $\rho$ is surjective if and only if
the following conditions are satisfied:

\emph{(i)} The homomorphism $\rho_0$ is surjective.

\emph{(ii)} For every integer $m\geq 1$, the subgroup $\im(\rho_m)$
of $ \rM_n(\F_p)$ contains  an element  of the form
$$\begin{pmatrix}x& 0&\cdots &0\\0&0&\cdots&0\\
\vdots&\vdots &\ddots&\vdots\\
0&0&\cdots &0\end{pmatrix}$$ with $x\neq 0$; or
equivalently, there exists, for every $m\geq 1$, an element $g_m\in
\fG$ such that $\rho(g_m)$ is of the
form $$\begin{pmatrix}1+p^ma_{1,1} &p^{m+1}a_{1,2} &\cdots &p^{m+1}a_{1,n}\\
p^{m+1}a_{2,1} &1+p^{m+1}a_{2,2} &\cdots & p^{m+1}a_{2,n}\\
\vdots&\vdots&\ddots&\vdots\\
p^{m+1}a_{n,1}& p^{m+1}a_{n,2}&\cdots& 1+p^{m+1}a_{n,n}
\end{pmatrix},$$ where $a_{i,j}\in \Z_p$ for $1\leq i,j\leq n$ and $a_{1,1}$
is not divisible by $p$.
\end{lemma}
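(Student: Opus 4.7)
The plan is to prove the nontrivial (``if'') direction by a standard filtration argument; the ``only if'' direction is immediate. Since $\fG$ is profinite, hence compact, $\rho(\fG)$ is closed in $\GL_n(\Z_p)$, so surjectivity of $\rho$ is equivalent to surjectivity of its reduction modulo $p^m$ for every $m\geq 1$. I would argue this by induction on $m$, the base case $m=1$ being condition (i). The kernel of $\GL_n(\Z/p^{m+1}\Z)\twoheadrightarrow\GL_n(\Z/p^m\Z)$ is canonically identified with the additive group $\rM_n(\F_p)$ via $1+p^mA\leftrightarrow A\bmod p$; a snake lemma applied to the diagram comparing $\fG/F^{m+1}\fG\twoheadrightarrow\fG/F^m\fG$ with the above sequence shows that, given surjectivity modulo $p^m$, surjectivity modulo $p^{m+1}$ is equivalent to surjectivity of $\rho_m:F^m\fG/F^{m+1}\fG\to\rM_n(\F_p)$. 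The statement thus reduces to proving $\im(\rho_m)=\rM_n(\F_p)$ for every $m\geq 1$.

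I would then exploit two structural facts about $\im(\rho_m)$. First, it is an $\F_p$-subspace of $\rM_n(\F_p)$: additivity of $\rho_m$ follows from
\[
(1+p^mA)(1+p^mB)\equiv 1+p^m(A+B)\pmod{p^{m+1}},
\]
and any additive subgroup of an $\F_p$-vector space is an $\F_p$-subspace. Second, $\im(\rho_m)$ is stable under the conjugation action of $\GL_n(\F_p)$: for $g\in\fG$ and $h\in F^m\fG$ with $\rho(h)=1+p^mA$ one has $\rho(ghg^{-1})=1+p^m\rho(g)A\rho(g)^{-1}$, so modulo $p$ this identifies $\rho_m([ghg^{-1}])$ with the conjugate of $\rho_m([h])$ by $\overline{\rho(g)}\in\GL_n(\F_p)$; by condition (i) these reductions $\overline{\rho(g)}$ fill out all of $\GL_n(\F_p)$.

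By condition (ii) the subspace $\im(\rho_m)$ contains $aE_{11}$ for some $a\in\F_p^\times$, hence contains $E_{11}$ itself, where $E_{ij}$ denotes the standard matrix unit. A short calculation then finishes the proof: every $\GL_n(\F_p)$-conjugate of $E_{11}$ is a rank-one idempotent in $\rM_n(\F_p)$, and for any $i\neq j$ the matrix $E_{ii}+E_{ij}$ is also a rank-one idempotent, hence conjugate to $E_{11}$. Consequently $E_{ij}=(E_{ii}+E_{ij})-E_{ii}$ lies in $\im(\rho_m)$; together with the diagonal units $E_{ii}$ (each conjugate to $E_{11}$), the full standard basis of $\rM_n(\F_p)$ lies in $\im(\rho_m)$, forcing $\im(\rho_m)=\rM_n(\F_p)$. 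The main (mild) obstacle is this final idempotent trick, combining conjugation-invariance with the very special element furnished by condition (ii); everything else is routine bookkeeping with the $p$-adic filtration and the snake lemma.
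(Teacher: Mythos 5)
Your proposal is correct and follows essentially the same route as the paper: reduce to showing $\im(\rho_m)=\rM_n(\F_p)$ for each $m\geq 1$, observe that $\im(\rho_m)$ is an $\F_p$-subspace of $\rM_n(\F_p)$ stable under $\GL_n(\F_p)$-conjugation (the latter using condition (i)), extract $E_{11}\in\im(\rho_m)$ from condition (ii), and then propagate to all matrix units $E_{ij}$ by conjugation. The only difference is cosmetic: where you note that $E_{ii}+E_{ij}$ is a rank-one idempotent and hence abstractly conjugate to $E_{11}$, the paper writes down the explicit conjugating matrices $U_i=E_{1,i}-E_{i,1}+\sum_{j\neq 1,i}E_{j,j}$ to get $E_{ii}$ and the transvections $U_{i,j}=I+E_{i,j}$ to get $E_{ii}\pm E_{ij}$; your more conceptual description of the conjugacy class and the paper's computation are two presentations of the same step.
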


\begin{proof}We notice first that $\rho$ is surjective if and only if $\rho_m$ is surjective for every $m\geq 0$, because $\fG$ is complete and $\GL_n(\Z_p)$ is separated \cite[Chap. III \S2 $\mathrm{n}^\circ8$ Cor.2 au Th\'eo. 1]{Bou}. The surjectivity of $\rho_0$ is condition (i). Condition (ii) is clearly necessary. We prove that it implies the surjectivity of $\rho_m$ for all $m\geq 1$, under the assumption of (i). First, we remark that under condition (i), if $A$ lies in $\im(\rho_m)$, then for any $U\in \GL_n(\F_p)$ the conjuagate matrix $U\cdot A\cdot U^{-1}$ lies also in $\im(\rho_m)$. In fact, let $\widetilde{A}$ be a lift of $A$ in $\rM_n(\Z_p)$ and $\widetilde{U}\in \GL_n(\Z_p)$ a lift of $U$. By assumption, there exist $g,h\in \fG$ such that 
\[\rho(g)\equiv 1+p^m\widetilde{A}\mod (1+p^{m+1}M_n(\Z_p))\quad\text{and}\quad \rho(h)\equiv \widetilde{U}\mod(1+p\rM_n(\Z_p)).\]
Therefore, we have $\rho(hgh^{-1})\equiv (1+p^m\widetilde{U}\cdot\widetilde{A}\cdot \widetilde{U}^{-1})\mod (1+p^{m+1}\rM_n(\Z_p))$. Hence $hgh^{-1}\in F^m\fG$ and $\rho_m(hgh^{-1})=U\cdot A \cdot U^{-1}$.

For $1\leq i,j\leq n$, let $E_{i,j}\in \rM_n(\F_p)$ be the matrix
whose $(i,j)$-th entry is $0$ and the other entries are $0$.  The
matrices $E_{i,j}(1\leq i,j\leq n)$ form clearly a basis of $\rM_n(\F_p)$ over $\F_p$. To prove the surjectivity of $\rho_m$, we only need to verify that $E_{i,j}\in \im(\rho_m)$ for $1\leq i,j \leq n$, because $\im(\rho_m)$ is an $\F_p$-subspace of $\rM_n(\F_p)$. By assumption, we have $E_{1,1}\in \im(\rho_m)$. For $2\leq i \leq n$, we put $U_i=E_{1,i}-E_{i,1}+\sum_{j\neq 1,i}E_{j,j}$. Then we have $U_i\in \GL_n(\Z_p)$ and $U_i\cdot E_{1,1}\cdot U_i^{-1}=E_{i,i}\in \im(\rho_m)$. For  $1\leq  i< j\leq n$, we put $U_{i,j}=I+E_{i,j}$ where $I$ is the unit matrix. Then we have $U_{i,j}\cdot E_{i,i}\cdot U^{-1}_{i,j}=E_{i,i}+E_{i,j}\in \im(\rho_m)$, and hence $E_{i,j}\in \im(\rho_m)$. This completes the proof.

\end{proof}

\begin{rem} By using the arguments in \cite[Chap. IV 3.4 Lemma 3]{Se2}, we can prove the following stronger form of  Lemma \ref{lemma-gp-1}: \emph{If $p=2$, condition $\mathrm{(i)}$ and $\mathrm{(ii)}$ for $m=1,2$ are sufficient to guarantee the surjectivity of $\rho$; if $p\geq 3$, then $\mathrm{(i)}$ and $\mathrm{(ii)}$ just for $m=1$ suffice already.}
\end{rem}

A subgroup $C$ of $\GL_n(\F_p)$ is called a \emph{non-split Cartan subgroup}, if the subset  $C\cup \{0\}$ of the matrix algebra $\rM_n(\F_p)$ is a field isomorphic to $\F_{p^n}$; such a group is cyclic of order $p^n-1$. 

\begin{lemma}\label{lemma-gp-2}
Assume that $n\geq 2$. We denote by $H$ the subgroup of $\GL_n(\F_p)$ consisting of all the elements of the form  
$\begin{pmatrix}A&b\\
0&1\end{pmatrix},$ where $A\in \GL_{n-1}(\F_p)$ and $b=\begin{pmatrix}b_1\\ \vdots\\ b_{n-1}
\end{pmatrix}$ with $b_i\in \F_p(1\leq i\leq n-1)$. 
 Let $G$ be a subgroup  of $\GL_n(\F_p)$. Then $G=\GL_n(\F_p)$  if and only if $G$ contains $H$ and a non-split Cartan subgroup of $\GL_n(\F_p)$.
\end{lemma}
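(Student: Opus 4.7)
The ``only if'' direction is trivial, so the content lies in the converse. My plan is to identify $H$ as a point-stabilizer for a natural action of $\GL_n(\F_p)$ on a finite set of size $p^n-1$, verify that a non-split Cartan acts transitively on this set, and close by a one-line orbit-stabilizer computation.

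First I would set $V = \F_p^n$ (column vectors) and let $\ell_0 \in V^*$ be the linear functional picking out the last coordinate. A direct block computation shows that $g \in \GL_n(\F_p)$ satisfies $\ell_0\circ g^{-1} = \ell_0$ if and only if $g$ has the block form $\begin{pmatrix}A & b\\ 0 & 1\end{pmatrix}$; that is, $H$ equals $\mathrm{Stab}_{\GL_n(\F_p)}(\ell_0)$ under the contragredient action. Since $\GL_n(\F_p)$ acts transitively on $X := V^*\setminus\{0\}$, the orbit-stabilizer theorem gives $[\GL_n(\F_p) : H] = |X| = p^n - 1$.

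Next I would show that the non-split Cartan $C$ acts transitively on $X$. By definition the subset $F := C\cup\{0\}$ of $\rM_n(\F_p)$ is a subfield isomorphic to $\F_{p^n}$, so $V$ is naturally an $F$-module; the equality $\dim_{\F_p}V = n = [F:\F_p]$ forces $V$ to be of $F$-dimension one, hence $V \cong F$ as $F$-modules. Consequently $V^*$ is also one-dimensional over $F$, and $C \cong \F_{p^n}^\times$ acts on $V^*$ by multiplication through some character. In particular the $C$-action on the $p^n-1$ nonzero vectors of $V^*$ is simply transitive.

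With $H \subset G$ and $C \subset G$, the $G$-orbit of $\ell_0$ contains the $C$-orbit, which is already all of $X$. The orbit-stabilizer formula applied inside $G$ therefore yields $[G : H] = [G : \mathrm{Stab}_G(\ell_0)] = |X| = p^n - 1 = [\GL_n(\F_p) : H]$, and hence $G = \GL_n(\F_p)$. The only slightly delicate point in this argument is the identification of $H$ as $\mathrm{Stab}(\ell_0)$; once that is in hand the remainder is pure orbit-counting, and I do not anticipate a real obstacle.
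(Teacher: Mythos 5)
Your proof is correct. It is a genuinely different route from the paper's, though closely related at bottom. The paper observes that $|\GL_n(\F_p)| = |H|\cdot|C|$ and then proves $H\cap C = \{1\}$ directly: for $g\in H\cap C$, the characteristic polynomial $P(T)=\prod_{\sigma\in\Gal(\F_{p^n}/\F_p)}(T-\sigma(\zeta))$ (with $\zeta\in\F_{p^n}^\times$ the element corresponding to $g$ in the Cartan) is divisible by $T-1$ because $g\in H$, forcing $\zeta=1$; hence $|G|\geq |HC| = |H||C| = |\GL_n(\F_p)|$. You instead realize $H$ geometrically as $\mathrm{Stab}(\ell_0)$ for the contragredient action on $V^*\setminus\{0\}$, and obtain the needed counting from the simple transitivity of $C$ on that orbit. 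The two are equivalent in content --- your ``$C$ acts simply transitively on $V^*\setminus\{0\}$'' is (given $|C|=|V^*\setminus\{0\}|$) exactly the statement $C\cap\mathrm{Stab}(\ell_0)=C\cap H=\{1\}$ --- but your version replaces the characteristic-polynomial computation by the more structural observation that $V^*$ is a one-dimensional $\F_{p^n}$-vector space, which some readers will find cleaner. One small caveat worth making explicit in a write-up: when applying orbit-stabilizer inside $G$, you should note $\mathrm{Stab}_G(\ell_0) = G\cap H = H$ (using $H\subseteq G$) before concluding $[G:H]=p^n-1=[\GL_n(\F_p):H]$.
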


\begin{proof} The ``only if'' part is clear. For the ``if'' part, let
  $C$ be a non-split Cartan subgroup contained in $G$. For a finite
  group $\Lambda$, we denote by $|\Lambda|$ its order.  An easy
  computation shows that $|\GL_n(\F_p)|=|H| \cdot |C|$. So we just
  need to prove that $U\cap C=\{1\}$; since then we will have
  $|\GL_n(\F_p)|=|G|$, hence $G=\GL_n(\F_p)$. Let $g\in H\cap C$, and
  $P(T)\in \F_p[T]$ be its characteristic polynomial.  We fix an
  isomorphism $C\simeq \F_{p^n}^\times$, and let $\zeta\in
  \F_{p^n}^\times$ be the element corresponding to $g$. We have
  $P(T)=\prod_{\sigma\in \Gal(\F_{p^n}/\F_p)}(T-\sigma(\zeta))$ in
  $\F_{p^n}[T]$. On the other hand, the fact that $g\in H$  implies that $(T-1)$ divise $P(T)$. Therefore, we get $\zeta=1$, \ie $g=1$.
\end{proof}

\begin{rem} E. Lau point  out the following strengthened version of \ref{lemma-gp-2}: \emph{When $n\geq 3$, a subgroup $G\subset\GL_n(\F_p)$ coincides with $\GL_n(\F_p)$ if and only if $G$ contains a non-split Cartan subgroup and the subgroup $\begin{pmatrix}\GL_{n-1}(\F_p)&0\\
0&1\end{pmatrix}$}. This can be used to simplify the induction process in the proof  of Theorem \ref{thm-one-dim} when $n\geq 3$.
\end{rem}

\section{Proof of Theorem \ref{thm-main} in the One-dimensional Case}

\subsection{} We start with a general remark on the monodromy of
BT-groups. Let $X$ be a scheme, $G$ be an ordinary BT-group over a scheme $X$,
$G^\et$ be its \'etale part \eqref{decom-ord}. If $\etab$ is a
geometric point of $X$, we denote by
\[\rT_p(G,\etab)=\varprojlim_nG(n)(\etab)=\varprojlim_n G^\et(n)(\etab)\]
the Tate module of $G$ at $\etab$, and by $\rho(G)$ the monodromy
representation of $\pi_1(X,\etab)$ on $\rT_p(G,\etab)$. Let $f:Y\ra
X$ be a morphism of schemes, $\xib$ be a geometric point of $Y$,
$G_Y=G\times_X Y$. Then by the functoriality,  we have a commutative
diagram
\begin{equation}\label{funct-mono}\xymatrix{\pi_1(Y,\xi)\ar[r]^{\pi_1(f)}\ar[d]_{\rho(G_Y)}&\pi_1(X,f(\xib))\ar[d]^{\rho(G)}\\
\Aut_{\Z_p}(\rT_p(G_Y,\xib))\ar@{=}[r]&\Aut_{\Z_p}(\rT_p(G,f(\xib)))}\end{equation}
In particular, the monodromy of $G_Y$ is a subgroup of the monodromy
of $G$. In the sequel,  diagram \eqref{funct-mono} will be
refereed as the \emph{functoriality of monodromy} for the BT-group
$G$ and the morphism  $f$.

\subsection{} Let $k$ be an algebraically closed field of
characteristic $p>0$, $G$ be the unique connected  BT-group over $k$
of dimension $1$ and height $n+1\geq 2$ \eqref{HW-exem}. We denote by
$\bS$ the algebraic local moduli of $G$ in characteristic $p$, by
$\bG$ the universal deformation of $G$ over $\bS$, and by $\bU$ the
ordinary locus of $\bG$ over $\bS$ \eqref{defn-moduli}. Recall that $\bS$ is affine of
ring $R\simeq k[[t_1,\cdots, t_n]]$ \eqref{cor-alg-univ}, and that $G$
and $\bG$ are HW-cyclic (cf. \ref{lemma-cyclic}(i) and \ref{HW-exem}). Let $\etab$ be a geometric point of $\bU$ over its generic point. We  put  
$$\rT_p(\bG,\etab)=\varprojlim_{m\in \Z_{\geq 1}}\bG(m)(\etab)$$ to be the Tate module of $\bG$ at the point $\etab$. This is a free $\Z_p$-module of rank $n$. We have the monodromy representation 
\[\rho_n:\pi_1(\bU,\etab)\ra \Aut_{\Z_p}(\rT_p(\bG,\etab))\simeq \GL_n(\Z_p).\]
The following  is the one-dimensional case of Theorem \ref{thm-main}.

\begin{thm}\label{thm-one-dim}
Under the above assumptions, the homomorphism $\rho_n$ is surjective for $n\geq 1$.
\end{thm}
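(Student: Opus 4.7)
The plan is to proceed by induction on $n\geq 1$. The base case $n=1$ follows from Igusa's Theorem~\ref{thm-Igusa}: by Proposition~\ref{prop-HW-versal}(ii) the universal deformation $\bG$ over $\bS=\Spec k[[a_1]]$ has Hasse invariant $h(\bG)=1$, so the theorem gives a surjective trait monodromy $I\twoheadrightarrow\Z_p^\times$, which is identified with $\im(\rho_1)$ by the functoriality of monodromy \eqref{funct-mono}.

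For the inductive step, I fix $n\geq 2$, assume the theorem for $n-1$, and use Proposition~\ref{prop-HW-versal}(ii) to choose coordinates $(a_1,\ldots,a_n)$ on $R$ so that the Hasse--Witt matrix of $\bG$ is in the cyclic form \eqref{HW-matrix}. I then apply Lemma~\ref{lemma-gp-1} to $\rho_n$. For condition~(i), Lemma~\ref{lemma-gp-2} reduces the problem to exhibiting inside $\im(\rho_{n,0})$ both a non-split Cartan subgroup and the parabolic subgroup $H$. The Cartan would come from the specialization $R\to k[[\pi]]$ defined by $a_1\mapsto\pi$ and $a_i\mapsto\pi^2$ for $2\leq i\leq n$: by Lemma~\ref{lemma-HW-V}(ii) the pulled-back BT-group is connected over the trait with Hasse invariant one, and Proposition~\ref{prop-mono-trait}(i) identifies its mod-$p$ monodromy with the character $\theta_{p^n-1}:I_t\to\F_{p^n}^\times$ of \eqref{galois-char}, a non-split Cartan. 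For the parabolic $H$ I would invoke the inductive hypothesis: combining Proposition~\ref{prop-def-surj} with the inductive surjectivity of the monodromy of the universal deformation $\bG^{(n-1)}$ of the connected, dimension-one, height-$n$ BT-group $G_0^{(n-1)}$, the $\GL_{n-1}$-block of $H$ should arise as the monodromy of a sub-family of $\bG$ realized via a suitable dominant morphism from the ordinary locus of the deformation space of $G_0^{(n-1)}\oplus\Q_p/\Z_p$, while the unipotent translation part of $H$ would come from Corollary~\ref{cor-non-zero-coh} applied along a trait that certifies non-splitness of the associated Galois extension of Tate modules.

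For condition~(ii), for each $m\geq 1$ I would iterate, at the $p^m$-level, the Newton-polygon argument from the proof of Theorem~\ref{thm-Igusa} applied to the same trait specialization used for the non-split Cartan: the successive $p^m$-torsion points of $\bG$ satisfy additive Eisenstein-type recursions whose Newton polygons force the required ramification and produce a generator of the wild inertia whose image under $\rho_n$ is of the form $1+p^m\cdot\mathrm{diag}(x,0,\ldots,0)\pmod{1+p^{m+1}\rM_n(\Z_p)}$ with $x\in\Z_p^\times$, in a basis of $\rT_p(\bG,\etab)$ compatible with the $\F_{p^n}$-structure arising from the Cartan above.

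The main obstacle is the construction for condition~(i) realizing the parabolic $H$: the universal deformation space $\bS$ of the connected $G_0$ does not contain as a closed subscheme the universal deformation space of the non-connected $G_0^{(n-1)}\oplus\Q_p/\Z_p$, so the transfer of the inductive $\GL_{n-1}$-block cannot proceed by direct base change. A more delicate comparison will be required---perhaps via a correspondence over the common \'etale neighborhood of the generic ordinary fibers of the two moduli---together with a careful verification that the cocycles provided by Corollary~\ref{cor-non-zero-coh} span the whole $\F_p^{n-1}$ as the trait specialization varies, rather than a proper subspace.
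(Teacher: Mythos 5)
You correctly identify the inductive skeleton of the argument (Igusa for $n=1$, Lemmas~\ref{lemma-gp-1} and \ref{lemma-gp-2} for the step, a trait specialization to produce the non-split Cartan via Proposition~\ref{prop-mono-trait}(i)), and you even put your finger on the genuine difficulty: the deformation space $\bS$ of the connected $G_0$ does not contain the deformation space of $G_0^{(n-1)}\oplus\Q_p/\Z_p$ as a subscheme, so the $\GL_{n-1}$-block from the inductive hypothesis cannot be imported by a naive base change. But the fix you sketch does not close the gap, and the detour you take for Lemma~\ref{lemma-gp-1}(ii) is not needed and would not work as written.

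The missing idea is the following. One should not try to compare two different moduli spaces via ``a correspondence over a common \'etale neighborhood.'' Instead, complete $R$ at the prime $\fp=(t_1,\ldots,t_{n-1})$ (in coordinates where the Hasse--Witt matrix of $\bG$ is the companion matrix \eqref{HW-one-dim}), set $R'=\widehat{R}_\fp$, and base change $\bG$ to $R'$. The special fiber of $\cG_{R'}$ is then $\cG^\circ_{\Kb_0}\times\Q_p/\Z_p$, with $\cG^\circ_{\Kb_0}$ connected one-dimensional of height $n$. The crucial lemma (the paper's Lemma~\ref{lemma-key}, resting on \ref{lemma-one-dim-2}) is that continuous lifts $\sigma\colon R'\to\Rb=\Kb_0[[x_1,\ldots,x_{n-1}]]$ of the residue map parametrize deformations of $\cG_{\Kb_0}$ over $\Rb$ \emph{bijectively}, and that one can choose two such lifts $\sigma_1,\sigma_2$ having the \emph{same} connected component $\cG^\circ_{\Rb,\sigma_1}=\cG^\circ_{\Rb,\sigma_2}$ (equal to the universal deformation of $\cG^\circ_{\Kb_0}$, so the inductive hypothesis applies), with $\sigma_1$ giving a split extension ($C_{\sigma_1}=0$) and $\sigma_2$ giving a non-split one ($\cb_{\sigma_2}\neq 0$, detected via Corollary~\ref{cor-non-zero-coh}). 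The split choice $\sigma_1$ plants $\begin{pmatrix}\GL_{n-1}(\Z_p)&0\\0&1\end{pmatrix}$ inside $\im(\rho_{\cG_{R'}})$; the non-split choice $\sigma_2$ supplies a single matrix $\begin{pmatrix}B_2&b_2\\0&1\end{pmatrix}$ with $b_2\not\equiv 0\pmod p$; conjugating and multiplying the two yields the whole parabolic $H$. You do not need the cocycle values to span $\F_p^{n-1}$ as you worry---one non-zero value plus the $\GL_{n-1}$-block does it by transitivity of the $\GL_{n-1}$-action. Producing two different lifts with the \emph{same} connected part but \emph{different} extension classes is exactly what bypasses the objection you raise; your phrase ``common \'etale neighborhood'' gestures at this but does not supply the mechanism (the bijection $\Sigma\simeq\D_{\cG_{\Kb_0}}(\Rb)$ via the Kodaira--Spencer isomorphism, and the explicit description of the Hasse--Witt matrix of the connected quotient in Lemma~\ref{lemma-one-dim-2}(ii)).

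Regarding Lemma~\ref{lemma-gp-1}(ii): once you have the full parabolic $H$ inside $\im(\rho_n)$ (at the $\Z_p$-level, not merely mod $p$), condition~(ii) is automatic---just take $g_m$ with image $\begin{pmatrix}1+p^mE_{1,1}&0\\0&1\end{pmatrix}$. Your separate Newton-polygon argument on the trait underlying the Cartan cannot produce such elements: over that trait the mod-$p$ image sits in a non-split torus isomorphic to $\F_{p^n}^\times$, and the higher congruence pieces of the trait monodromy live in the corresponding $1+p^m\cO_{\Q_{p^n}}$; written in an $\F_p$-basis of $\rT_p$, these are never of the form $1+p^m\cdot\mathrm{diag}(x,0,\ldots,0)$ for a non-split torus. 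So that branch of your plan, as stated, fails and should simply be deleted once you get $H$ over $\Z_p$.
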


\subsection{}\label{nota-one-dim}  First, we assume $n\geq 2$.  By Proposition \ref{prop-HW-versal}(ii), we may assume that  
\begin{equation}\label{HW-one-dim}
\h=\begin{pmatrix}0& 0 &\cdots&0 &-t_1\\
 1&0&\cdots &0 &-t_2\\
 0&1&\cdots &0 &-t_3\\
 \vdots&&\ddots&& \vdots\\
0&0&\cdots &1&-t_{n}\end{pmatrix}
\end{equation}is a matrix of the Hasse-Witt map $\varphi_{\bG}$.
Let  $\fp$ be the prime ideal of $R$ generated by
$t_1,\cdots,t_{n-1}$,  $K_0\simeq k((t_n))$  be the fraction field of
$R/\fp$, $R'=\widehat{R}_{\fp}$ be the completion of the localization
of $R$ at $\fp$, and $\cG_{R'}=\bG\otimes_{R}R'$. Since the natural
map $R\ra R'$ is injective, for any $a\in R$, we will denote also by
$a$ its image in $R'$. Since the Hasse-Witt map commutes with base
change, the image of $\h$ in $\rM_{n\times n}(R')$, denoted also by
$\h$, is a matrix of $\varphi_{\cG_{R'}}$. Applying  \ref{lemma-HW-V}(ii) to the closed point of $\Spec(R')$, we see that the \'etale part of $\cG_{R'}$ has height $1$ and its connected part $\cG^\circ_{R'}$ has height $n$. We have an exact sequence of BT-groups over $R'$
\begin{equation}\label{seq-1}
0\ra \cG^{\circ}_{R'}\ra \cG_{R'}\ra \cG_{R'}^\et\ra 0.
\end{equation}
We fix  an imbedding $i:K_0\ra \Kb_0$ of $K_0$ 
into an algebraically closed field. Put
$\cG^{*}_{\Kb_0}=\cG^{*}_{R'}\otimes \Kb_0$  for
$*=\emptyset,\et,\circ$.  We have $\cG^\et_{\Kb_0}\simeq \Q_p/\Z_p$,
and $\cG^\circ_{\Kb_0}$ is the unique connected one-dimensional
BT-group over $\Kb_0$ of height $n$ (cf. \ref{HW-exem}).  We put
$\Rb=\Kb_0[[x_1,\cdots, x_{n-1}]]$, and 
\begin{equation}\label{defn-Sigma}
\Sigma=\{\text{ring homomorphisms }\sigma:R'\ra \Rb \text{ lifting }R'\ra K_0\xra{i} \Kb_0\}
\end{equation} 
Let $\sigma\in \Sigma$. We deduce from \eqref{seq-1} by base change an exact sequence of BT-groups over $\Rb$
\begin{equation}\label{seq-BT-R'}
0\ra \cG^\circ_{\Rb,\sigma}\ra \cG_{\Rb,\sigma}\ra \cG^\et_{\Rb,\sigma}\ra 0,
\end{equation}
where we have put $\cG^*_{\Rb,\sigma}=\cG^*_{R'}\otimes_{\sigma}\Rb$ for $*=\circ, \emptyset, \et$.
 Due to the henselian property of $\Rb$, the isomorphism $\cG^\et_{\Kb_0}\simeq \Q_p/\Z_p$ lifts uniquely to an isomorphism $\cG^\et_{\Rb,\sigma}\simeq \Q_p/\Z_p$ .
 Assume that $\cG^\circ_{\Rb,\sigma}$ is generically ordinary over $\Sb=\Spec(\Rb)$. Let $\widetilde{U}_\sigma'\subset \Sb$ be its ordinary locus, and $\xb$ be a geometric point over the generic point of $\widetilde{U}_\sigma'$. The exact sequence \eqref{seq-BT-R'} induces an exact sequence of Tate modules 
\begin{equation}\label{filt-Tate-R'}
0\ra \rT_p(\cG^\circ_{\Rb,\sigma},\xb)\ra \rT_p(\cG_{\Rb,\sigma},\xb)\ra \rT_p(\cG^{\et}_{\Rb,\sigma},\xb)\ra0
\end{equation}
compatible with the actions of $\pi_1(\widetilde{U}_\sigma',\xb)$. Since we have $\rT_p(\cG^\et_{\Rb,\sigma},\xb)\simeq\rT_p(\Q_p/\Z_p,\xb)= \Z_p$, this determines a cohomology 
class
\begin{equation}\label{class-one}C_\sigma\in \Ext^1_{\Z_p[\pi_1(\widetilde{U}_\sigma',\xb)]}(\Z_p,\rT_p(\cG^\circ_{\Rb,\sigma},\xb))\simeq H^1(\pi_1(\widetilde{U}_\sigma',\xb),\rT_p(\cG^\circ_{\Rb,\sigma},\xb)).\end{equation}
We consider also the ``mod-$p$ version'' of \eqref{filt-Tate-R'}
\[0\ra \cG^\circ_{\Rb,\sigma}(1)(\xb)\ra \cG_{\Rb,\sigma}(1)(\xb)\ra \F_p\ra 0,\]
which determines a cohomology class 
\begin{equation}\label{class-mod-p-one}\overline{C}_\sigma\in \Ext^1_{\F_p[\pi_1(\widetilde{U}_\sigma',\xb)]}(\F_p,\cG^\circ_{\Rb,\sigma}(1)(\xb))\simeq H^1(\pi_1(\widetilde{U}_\sigma',\xb),\cG^\circ_{\Rb,\sigma}(1)(\xb)).\end{equation}
It is clear that $\overline{C}_\sigma$ is the image of $C_\sigma$ by the canonical reduction map \[H^1(\pi_1(\widetilde{U}_\sigma',\xb),\rT_p(\cG^\circ_{\Rb,\sigma},\xb))\ra H^1(\pi_1(\widetilde{U}_\sigma',\xb),\cG^\circ_{\Rb,\sigma}(1)(\xb)).\]

\begin{lemma}\label{lemma-key} Under the above assumptions, there exist $\sigma_1,\sigma_2\in \Sigma$ satisfying the following properties:

\emph{(i)} We have $\cG^\circ_{\Rb,\sigma_1}=\cG^\circ_{\Rb,\sigma_2}$, and it is the universal deformation of $\cG^\circ_{\Kb_0}$.

\emph{(ii)} We have $C_{\sigma_1}=0$ and $\cb_{\sigma_2}\neq0$.
\end{lemma}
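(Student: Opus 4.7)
The plan is to first identify $\cG^\circ_{R'}$ as the universal deformation of $\cG^\circ_{K_0}$ over $R'$, then produce a $1$-parameter family in $\Sigma$ sharing this universal connected part, and finally pick two members with the required extension properties.

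\emph{Step 1.} Writing $w = \alpha_1 v_1 + \cdots + \alpha_{n-1} v_{n-1} + v_n$ (with $\alpha_i \in \m_{R'}$) for the unique semi-linear eigenvector of $\varphi_{\cG_{R'}}$ spanning $\Lie(\cG^{\et\vee}_{R'})$, the eigenvalue equation $\varphi(w) = \lambda w$ with $\lambda \in R'^\times$ gives the system
\[
\lambda \alpha_1 = -t_1, \quad \lambda \alpha_{j+1} = \alpha_j^p - t_{j+1}\ (1 \leq j \leq n-2), \quad \lambda = \alpha_{n-1}^p - t_n.
\]
Reducing modulo $\m_{R'}^2$ yields $\lambda \equiv -t_n$ and $\alpha_i \equiv t_i/t_n$, so $(\alpha_1,\ldots,\alpha_{n-1})$ is a regular system of parameters of $R'$. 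In the induced quotient basis $(\bar v_1,\ldots,\bar v_{n-1})$ of $\Lie(\cG^{\circ\vee}_{R'})$, the HW matrix of $\cG^\circ_{R'}$ is HW-cyclic with entries $\alpha_i$, so by Proposition~\ref{prop-HW-versal}(ii), $\cG^\circ_{R'}$ is the universal deformation of $\cG^\circ_{K_0}$. Consequently, for any $\sigma \in \Sigma$ with $(\sigma(\alpha_i))_{i=1}^{n-1}$ a regular system of parameters of $\Rb$, $\cG^\circ_{\Rb,\sigma}$ is the universal deformation of $\cG^\circ_{\Kb_0}$ over $\Rb$.

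\emph{Step 2.} Fix $(x_1,\ldots,x_{n-1})$ as the standard regular system of parameters of $\Rb$. The $n-1$ equations $\sigma(\alpha_i) = x_i$ are Hensel-solvable for $\sigma(t_1),\ldots,\sigma(t_{n-1})$ in terms of the single remaining free parameter $s := \sigma(t_n) - i(t_n) \in \m_\Rb$, giving a $1$-parameter family $\{\sigma_s\}_{s \in \m_\Rb} \subset \Sigma$ with common universal connected part. Any distinct pair $(\sigma_{s_1}, \sigma_{s_2})$ in this family already satisfies assertion~(i) of the lemma.

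\emph{Step 3.} Within this family, I locate $s_1$ with $C_{\sigma_{s_1}} = 0$ and $s_2$ with $\overline{C}_{\sigma_{s_2}} \neq 0$. For $\sigma_1$: the split extension $\cG^\circ \oplus \Q_p/\Z_p$ over $\Rb$ (with $\cG^\circ$ the universal deformation of $\cG^\circ_{\Kb_0}$) is itself HW-cyclic; computing its HW-cyclic normal form and using algebraic closedness of $\Kb_0$ to adjust the scalar trivialization $\mu \in \Rb^\times$ of $\Lie(\mu_{p^\infty})$, the resulting matrix entries can be matched with $\sigma_{s_1}(\h)$ for some value $s_1 \in \m_\Rb$. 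Then $\cG_{\Rb,\sigma_1}$ splits as a BT-group, its Tate-module extension splits as well, and $C_{\sigma_1}=0$. For $\sigma_2$: take any $s_2 \neq s_1$, and specialize $\Rb \to A := \Kb_0[[\pi]]$ via $x_i \mapsto \pi^{c_i}$ for well-chosen integers $c_i$ (notably $c_1 = 1$) so that the restricted HW matrix of $\cG_{\Rb,\sigma_2}\otimes_\Rb A$ satisfies the hypotheses of Corollary~\ref{cor-non-zero-coh}: Hasse invariant equal to $1$ and the existence of $\alpha \in \Kb_0$ with $\tv(P(\alpha)) = 1$ for the associated polynomial $P$. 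Then $\overline{C}$ is nontrivial over $A$, and the functoriality of $\overline{C}$ under the specialization map $\Rb \to A$ (and the induced map on fundamental groups) forces $\overline{C}_{\sigma_2} \neq 0$.

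The main obstacle is Step 3: explicitly matching the HW-cyclic form of the split extension with $\sigma_{s_1}(\h)$ requires a direct computation using algebraic closedness of $\Kb_0$, and choosing the specialization together with the exponents $c_i$ for $\sigma_2$ needs both valuation conditions of Corollary~\ref{cor-non-zero-coh} to hold simultaneously, which amounts to controlling the Newton polygon of the restricted $P(X)$.
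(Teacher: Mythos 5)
The central difficulty in your proof lies in an unstated rigidity claim that you use twice: that fixing the cyclic Hasse--Witt matrix of $\cG^\circ_{\Rb,\sigma}$ (namely the entries $\sigma(\alpha_i)=x_i$) pins down the isomorphism class $[\cG^\circ_{\Rb,\sigma}]$ in $\D_{\cG^\circ_{\Kb_0}}(\Rb)$. In Step 2 this is exactly what ``common universal connected part'' amounts to, and in Step 3 you use it again to conclude that matching the HW-cyclic form of $\cG^\circ\oplus \Q_p/\Z_p$ with $\sigma_{s_1}(\h)$ forces $\cG_{\Rb,\sigma_{s_1}}$ to be the split deformation. But the Hasse--Witt map is only the induced semi-linear endomorphism of $\Lie(\cG^\vee)=\M/V\M$; two deformations can have equal cyclic HW entries in suitable bases while corresponding to different classifying maps into the universal base, because the cyclic normal form is not a basis-independent invariant (already in rank one, conjugation by $u\in\Rb^\times$ replaces the single entry $a$ by $au^{p-1}$). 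Proposition~\ref{prop-HW-versal}(ii) tells you the HW entries of the universal object form a regular system of parameters and hence $\cG^\circ_{\Rb,\sigma_s}$ is versal, but it does not say that equality of HW entries across two deformations gives equality of deformation classes. So both the statement that the $\sigma_s$ share a common $\cG^\circ$, and the identification of $\sigma_1$ by HW matching, are not justified as written.

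The idea you are missing is exactly the bijection $\Phi\colon \Sigma\xra{\sim}\D_{\cG_{\Kb_0}}(\Rb)$, $\sigma\mapsto[\cG_{\Rb,\sigma}]$, which the paper establishes (Lemma~\ref{lemma-one-dim-2}(i)) by an inductive Kodaira--Spencer argument and uses to make the selection of $\sigma_1$ completely formal: once a single $\sigma_2$ is found with $\cG^\circ_{\Rb,\sigma_2}$ universal and $\cb_{\sigma_2}\neq 0$, one simply sets $\sigma_1=\Phi^{-1}\bigl([\cG^\circ_{\Rb,\sigma_2}\oplus\Q_p/\Z_p]\bigr)$; then tautologically $\cG^\circ_{\Rb,\sigma_1}\simeq\cG^\circ_{\Rb,\sigma_2}$ (as deformations) and $C_{\sigma_1}=0$. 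This avoids any HW-matrix matching or a one-parameter family with a rigidity claim. Your Step~1 (eigenvector computation over $R'$, leading to the cyclic HW matrix of $\cG^\circ_{R'}$ with entries $\alpha_i\equiv t_i/t_n$) is a correct and reasonable substitute for Lemma~\ref{lemma-one-dim-2}(ii), with the caveat that Proposition~\ref{prop-HW-versal}(ii) is only stated over algebraically closed residue fields, so the universality conclusion should be drawn over $\Rb$ after base change rather than over $R'$. For $\sigma_2$, the specialization idea to a trait and Corollary~\ref{cor-non-zero-coh} is the right one, but the paper makes it work cheaply by choosing $\sigma$ so that $\sigma(t_n)\in\Kb_0$ (via Lemma~\ref{lemma-lifting}) and specializing $\sigma(t_1)\mapsto\pi$, $\sigma(t_i)\mapsto 0$ $(2\le i\le n-1)$; then $P(X)=X^{p^n}+\sigma(t_n)X^{p^{n-1}}+\pi X$ and $\alpha$ can be taken to be a $p^{n-1}(p-1)$-th root of $-\sigma(t_n)$ in $\Kb_0$, killing the middle term so that $\tv(P(\alpha))=1$. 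Your proposal to push down $x_i\mapsto\pi^{c_i}$ would require translating back from the $\alpha_i$'s to the $t_i$'s, which, as you yourself note, is the part you have not controlled; the choice above dispenses with that Newton-polygon bookkeeping entirely.
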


Before proving  this lemma, we prove first  Theorem \ref{thm-one-dim}.

\begin{proof}[Proof of \ref{thm-one-dim}] First, we notice that the monodromy of a BT-group is independent of the base point. So we can change  $\etab$ to any other geometric point of $\bU$ when discussing the monodromy of $\bG$.  We make an induction on the codimension $n=\dim(G^\vee)$. The case of $n=1$ is proved in Theorem \ref{thm-Igusa}. Assume that $n\geq 2$ and the theorem is proved for $n-1$.  We denote by 
$$\rhob_n:\pi_1(\bU,\etab)\ra \Aut_{\F_p}(\bG(1)(\etab))\simeq
  \GL_n(\F_p)$$ the reduction of $\rho_n$ modulo by $p$.  By Lemma
  \ref{lemma-gp-1} and \ref{lemma-gp-2}, to prove the surjectivity of
  $\rho_n$, we only need to verify the following conditions:

(a) $\im(\rhob_n)$ contains a non-split Cartan subgroup of $\GL_n(\F_p)$; 

(b) $\im(\rho_n)$ contains the subgroup $H\subset\GL_n(\Z_p)$ consisting of  all the elements of the form $\begin{pmatrix}B&b\\0&1\end{pmatrix}\in \GL_n(\Z_p)$,
with $B\in \GL_{n-1}(\Z_p)$ and $b=\rM_{n-1\times 1}(\Z_p)$;

For condition (a), let $A=k[[\pi]]$, $T=\Spec(A)$, $\xi$ be its generic point, $\xib$ be a geometric point over $\xi$, and  $I=\Gal(\xib/\xi)$ be the absolute Galois group over $\xi$. We keep the notations of \ref{nota-one-dim}. Let $f^*:R\ra A$ be the homomorphism of $k$-algebras such that $f^*(t_1)=\pi$ and $f^*(t_i)=0$ for $2\leq i\leq n$. We denote by $f:T\ra \bS$ the corresponding morphism of schemes, and put $G_T=\bG\times_{\bS}T$. By the functoriality of Hasse-Witt maps, 
\[\h_T=\begin{pmatrix}0&0&\cdots&0 &-\pi\\
1&0&\cdots&0&0\\
\vdots&&\ddots&&\vdots\\
0&0&\cdots &1&0\end{pmatrix}\]
is a matrix of $\varphi_{G_T}$. By definition \ref{defn-hw-index}, the Hasse invariant  of $G_T$ is $h(G)=1$. Hence $G_T$ is generically ordinary; so $f(\xi)\in \bU$. Let 
\[\rhob_T:I=\Gal(\xib/\xi)\ra \Aut_{\F_p}(G_T(1)(\xib))\] be the mod-$p$ monodromy representation attached to $G_T$. Proposition \ref{prop-mono-trait}(i) implies that $\im(\rhob_T)$ is a non-split Cartan subgroup of $\GL_n(\F_p)$. On the other hand, by  the  functoriality of monodromy, we get $\im(\rhob_T)\subset \im(\rhob_n)$. This verifies condition (a). 

To check condition (b), we consider the constructions in \ref{nota-one-dim}. Let  $S'=\Spec(R')$, $f:S'\ra \bS$ be the morphism of schemes corresponding to the natural ring homomorphism  $R\ra R'$, $U'$ be the ordinary locus of $\cG_{R'}$, and $\xib$  be a geometric point of $U'$.    From \eqref{seq-1}, we deduce an exact sequence of Tate modules
\begin{equation}\label{seq-Tate-1}0\ra \rT_p(\cG_{R'}^\circ,\xib)\ra \rT_p(\cG_{R'},\xib)\ra \rT_p(\cG_{R'}^\et,\xib)\ra0.\end{equation}
Let $\rho_{\cG'}:\pi_1(U',\xib)\ra
\Aut_{\Z_p}(\rT_p(\cG_{R'},\xib))\simeq \GL_n(\Z_p)$ be the  monodromy
represention of $\cG_{R'}$. Under any basis of $\rT_p(\cG_{R'},\xib)$
adapted to \eqref{seq-Tate-1},  the action of $\pi_1(U',\xib)$ on $\rT_p(\cG_{R'},\xib)$ is given by 
\[\rho_{\cG_{R'}}\colon g\in \pi_1(U',\xib)\mapsto\begin{pmatrix}\rho_{\cG^\circ_{R'}}(g)&*\\
0&\rho_{\cG^{\et}_{R'}}(g),\end{pmatrix}\]
where $g\mapsto\rho_{\cG^\circ_{R'}}(g)\in \GL_{n-1}(\Z_p)$ (\resp
$g\mapsto\rho_{\cG^\et_{R'}}(g)\in \Z_p^\times$) gives the action of
$\pi_1(U',\xib)$ on $\rT_p(\cG^{\circ}_{R'},\xib)$ (\resp on
$\rT_p(\cG^{\et}_{R'},\xib)$). Note that  $f(U')\subset \bU$. So by  the functoriality of monodromy, we get $\im(\rho_{\cG'})\subset \im(\rho_n)$. To complete the proof of Theorem \ref{thm-one-dim}, it suffices to check   condition  (b)
 with $\rho_{n}$ replaced by $\rho_{\cG_{R'}}$  under the induction
hypothesis that \ref{thm-one-dim} is valide for $n-1$.  Let $\sigma_1, \sigma_2:R'\ra \Rb$ be the homomorphisms  given by \ref{lemma-key}. For $i=1,2$, we denote by  $f_{i}:\Sb=\Spec(\Rb)\ra S'=\Spec(R')$ the morphism of schemes corresponding to $\sigma_i$, and  put $\cG_i=\cG_{\Rb,\sigma_i}=\cG_{R'}\otimes_{\sigma_i}\Rb$ to simply the notations. By condition \ref{lemma-key}(i),  we can denote by $\cG^\circ$ the common connected component of $\cG_{1}$ and $\cG_{2}$. Let $\Ub\subset \Sb$ be the ordinary locus of $\cG^\circ$. Then we have $f_i(\Ub)\subset U'$ for $i=1,2$. Let $\xb$ be a geometric point over the generic point of $\Ub$. We have an exact sequence of Tate modules
\begin{equation}\label{seq-proof-one}
0\ra \rT_p(\cG^\circ,\xb)\ra \rT_p(\cG_i,\xb)\ra \rT_p(\Q_p/\Z_p,\xb)\ra 0
\end{equation}
compatible with the actions of $\pi_1(\Ub,\xb)$. We denote by 
$$\rho_{\cG_i}:\pi_1(\Ub,\xb)\ra \Aut_{\Z_p}(\rT_p(\cG_i,\xb))\simeq \GL_n(\Z_p)$$ the monodromy representation of $\cG_i$. In a basis adapted to   \eqref{seq-proof-one}, the action of $\pi_1(\Ub,\xb)$ on $\rT_p(\cG_i,\xb)$ is given by 
\[\rho_{\cG_i}: g\mapsto \begin{pmatrix}\rho_{\cG^\circ}(g)&C_{\sigma_i}(g)\\
0&1\end{pmatrix},\]
where $\rho_{\cG^\circ}:\pi_1(\Ub,\xb)\ra \GL_{n-1}(\Z_p)$ is the monodromy representation of $\cG^\circ$, and the cohomology class in $H^1(\pi_1(\Ub,\xb),\rT_p(\cG^\circ))$ given by  $g\mapsto C_{\sigma_i}(g)$ is nothing but the class defined in \eqref{class-one}.  By \ref{lemma-key}(i) and the induction hypothesis, $\rho_{\cG^\circ}$ is surjective. Since  the cohomology class $C_{\sigma_1}=0$ by \ref{lemma-key}(ii), we may assume $C_{\sigma_1}(g)=0$ for all $g\in \pi_1(U',\xb)$. Therefore $\im(\rho_{\cG_1})$ contains all the matrix of the  form $\begin{pmatrix}B&0\\
0&1\end{pmatrix}$ with $B\in \GL_{n-1}(\Z_p)$. By the functoriality of monodromy, $\im(\rho_{\cG_{R'}})$ contains $\im(\rho_{\cG_1})$. Hence we have  
\begin{equation}\label{mono-im-1}
\begin{pmatrix}\GL_{n-1}(\Z_p)&0\\
0&1\end{pmatrix}\subset \im(\rho_{\cG_1})\subset\im(\rho_{\cG_{R'}}).
\end{equation} On the other hand, since the cohomology class $\cb_{\sigma_2}\neq 0$, there exists a $g\in \pi_1(\Ub,\xb)$ such that $b_2=\cb_{\sigma_2}(g)\neq0$. Hence the matrix $\rho_{\cG_2}(g)$ has the form
$\begin{pmatrix}B_2&b_2\\
0&1\end{pmatrix}$ such that $B_2\in \GL_{n-1}(\Z_p)$ and the image of $b_2\in\rM_{1\times n-1}(\Z_p)$ in $\rM_{1\times n-1}(\F_p)$ is non-zero.  By the functoriality of monodromy, we have $\im(\rho_{\cG_2})\subset\im(\rho_{\cG_{R'}})$; in particular, we have $\begin{pmatrix}B_2&b_2\\
0&1\end{pmatrix}\in \im(\rho_{\cG_{R'}})$.  In view of  \eqref{mono-im-1}, we get
\[\begin{pmatrix}\GL_{n-1}(\Z_p)&0\\
0&1\end{pmatrix}
\begin{pmatrix}B_2&b_2\\
0&1\end{pmatrix}
\begin{pmatrix}\GL_{n-1}(\Z_p)&0\\
0&1\end{pmatrix}
\subset \im(\rho_{\cG_{R'}}).\]
 But  the subset of $\GL_n(\Z_p)$ on the left hand side is just the subgroup $H$ described in condition (b). Therefore, condition (b) is verified for $\rho_{\cG_{R'}}$, and the proof of \ref{thm-one-dim} is complete. 
\end{proof}

The rest of this section is dedicated to the proof of Lemma \ref{lemma-key}.

\begin{lemma}\label{sublemma-1}
Let $k$ be an algebraically closed field of characteristic $p>0$, $A$ be a noetherian henselian local $k$-algebra with residue field $k$, $G$ be a BT-group over $A$,  and $G^\et$ be its \'etale part. Put 
\[\Lie(G^\vee)^{\varphi=1}=\{x\in \Lie(G^\vee)\;\text{such that}\;\, \varphi_G(x)=x\}.\]Then $\Lie(G^\vee)^{\varphi=1}$ is an $\F_p$-vector space of dimension equal to the rank of $\Lie(G^{\et\vee})$, and the $A$-submodule $\Lie(G^{\et\vee})$ of $\Lie(G^\vee)$ is generated by $\Lie(G^\vee)^{\varphi=1}$.
 \end{lemma}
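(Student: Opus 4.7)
The plan is to show that $\Lie(G^\vee)^{\varphi=1}$ coincides with $\Lie(G^{\et\vee})^{\varphi=1}$, and then to compute the latter explicitly. The $\F_p$-vector space structure on $\Lie(G^\vee)^{\varphi=1}$ is automatic: for $c\in \F_p$ and $x$ fixed by $\varphi_G$, the $F_A$-semilinearity gives $\varphi_G(cx)=c^p\varphi_G(x)=cx$.

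First I would establish the inclusion $\Lie(G^\vee)^{\varphi=1}\subseteq \Lie(G^{\et\vee})$. Given $x\in \Lie(G^\vee)$ with $\varphi_G(x)=x$, let $\bar x$ be its image in $\Lie(G^{\circ\vee})$ under the projection in \eqref{decomp-Lie}; by functoriality of Hasse-Witt maps, $\varphi_{G^\circ}(\bar x)=\bar x$. Since $G^\circ\otimes_A k$ is connected, Corollary \ref{cor-nilp-HW} yields an integer $N\geq 1$ with $\varphi_{G^\circ\otimes k}^N=0$, which by base-change compatibility means $\varphi_{G^\circ}^N(\Lie(G^{\circ\vee}))\subseteq \m\cdot\Lie(G^{\circ\vee})$, where $\m$ denotes the maximal ideal of $A$. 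I would then bootstrap: fixing an $A$-basis $(e_i)$ of $\Lie(G^{\circ\vee})$, if $\bar x\in \m^r\Lie(G^{\circ\vee})$ and $\bar x=\sum a_ie_i$ with $a_i\in \m^r$, then
\[
\bar x \;=\; \varphi_{G^\circ}^N(\bar x) \;=\; \sum_i a_i^{p^N}\,\varphi_{G^\circ}^N(e_i) \;\in\; \m^{rp^N+1}\Lie(G^{\circ\vee}),
\]
using $F_A^N$-semilinearity of $\varphi_{G^\circ}^N$. Starting from $r=1$ and iterating the recursion $r\mapsto rp^N+1$, the image $\bar x$ lies in $\bigcap_n\m^n\Lie(G^{\circ\vee})$, which vanishes by Krull's intersection theorem applied to the finitely generated $A$-module $\Lie(G^{\circ\vee})$. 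Hence $\bar x=0$ and $x\in \Lie(G^{\et\vee})$.

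Next I would compute $\Lie(G^{\et\vee})^{\varphi=1}$ and verify it spans $\Lie(G^{\et\vee})$ over $A$. Since $A$ is henselian with algebraically closed residue field, every finite \'etale group scheme over $A$ is constant, so $G^\et(1)\simeq (\Z/p\Z)^r$, where $r$ is the height of $G^\et$. This group scheme lies in $\GV_A$ and admits the explicit presentation $\Spec A[X_1,\ldots,X_r]/(X_i^p-X_i)$ with primitive comultiplication, so by Proposition \ref{GV-pLie} and the description in \ref{desc-ex} its associated $p$-Lie algebra is $(A^r,\Id)$; equivalently, there is an $A$-basis $(e_1,\ldots,e_r)$ of $\Lie(G^{\et\vee})$ on which $\varphi_{G^\et}$ acts as the identity. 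A vector $\sum a_ie_i$ is then fixed exactly when $a_i^p=a_i$, and since $A$ is henselian local of residue characteristic $p$, the $p$ roots of $T^p-T$ in the residue field lift uniquely to $A$, yielding $\Lie(G^{\et\vee})^{\varphi=1}=\bigoplus_i\F_p\cdot e_i$. This is an $\F_p$-vector space of dimension $r=\rk\Lie(G^{\et\vee})$ that manifestly generates $\Lie(G^{\et\vee})$ as an $A$-module.

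The main obstacle I expect is the nilpotency step in the second paragraph: Corollary \ref{cor-nilp-HW} only gives nilpotency of the Hasse-Witt map on the closed fiber, so the argument must exploit $F_A$-semilinearity to accelerate the descent into deeper powers of $\m$ and then invoke Krull's intersection theorem to propagate vanishing from the residue field up to $A$ itself.
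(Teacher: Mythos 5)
Your proof is correct and follows essentially the same two-step strategy as the paper: first the bootstrap argument (nilpotency of $\varphi_{G^\circ}$ on the special fiber, accelerated by Frobenius-semilinearity, then Krull intersection) to show $\Lie(G^\vee)^{\varphi=1}\subset\Lie(G^{\et\vee})$, and then an analysis of the $\varphi$-fixed points in the \'etale part using henselianness. The only cosmetic difference is in the second step: the paper keeps a general matrix $U\in\GL_r(A)$ for $\varphi_{G^\et}$, cites Katz for the count of solutions over $k$, lifts via Hensel, and invokes Nakayama, whereas you first trivialize $G^{\et}(1)\simeq(\Z/p\Z)^r$ (again by henselianness and the algebraically closed residue field) so the matrix becomes the identity and the fixed-point equation $a_i^p=a_i$ is solved directly; both routes rest on the same hypotheses and yield the same conclusion.
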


\begin{proof} Let $r$ be the rank of $\Lie(G^{\et\vee})$, $G^\circ$ be the connected part of $G$, and $s$ be the height of $\Lie(G^{\circ\vee})$. We have an exact sequence of  $A$-modules
\[0\ra \Lie(G^{\et\vee})\ra \Lie(G^\vee)\ra \Lie(G^{\circ\vee})\ra 0,\]
compatible with Hasse-Witt maps. We choose a basis of $\Lie(G^\vee)$ adapted to this exact sequence, so that $\varphi_{G}$ is expressed by a matrix of the form $\begin{pmatrix}U&W\\
0&V\end{pmatrix}$ with $U\in \rM_{r\times r}(A)$, $V\in \rM_{s\times s}(A)$, and $W\in \rM_{r\times s}(A)$. An element  of $\Lie(G^\vee)^{\varphi=1}$ is given by a vector $\begin{pmatrix}x\\y\end{pmatrix}$, where $x=\begin{pmatrix}x_1\\ \vdots \\ x_{r}\end{pmatrix}$ and $y=\begin{pmatrix}y_1\\ \vdots \\ y_{s}\end{pmatrix}$ with $x_i, y_j\in A$, satisfying
\begin{equation}\label{formula-sol}\begin{pmatrix}U&W\\
0&V\end{pmatrix}\cdot \begin{pmatrix}x^{(p)}\\ y^{(p)}\end{pmatrix}=\begin{pmatrix}x\\y\end{pmatrix}\quad \Leftrightarrow \quad\begin{cases}U\cdot x^{(p)}+W\cdot y^{(p)}=x\\
V\cdot y^{(p)}=y.\end{cases}
\end{equation} 
where $x^{(p)}$ (\resp $y^{(p)}$) is  the vector obtained by applying $a\mapsto a^p$ to each $x_i(1\leq i\leq r)$ (\resp $y_j(1\leq j\leq s)$). By \ref{cor-nilp-HW}, the Hasse-Witt map of the special fiber of $G^\circ$ is nilpotent. So there exists an integer $N\geq 1$ such that $\varphi_{G^\circ}^N(\Lie(G^{\circ\vee}))\subset\m_{A}\cdot \Lie(G^{\circ\vee})$, \ie we have $V\cdot V^{(p)} \cdots  V^{(p^{N-1})}\equiv 0\quad(\mathrm{mod}\; \m_A)$. From the equation $V\cdot y^{(p)}=y$, we deduce that 
\[y=V\cdot V^{(p)}\cdots V^{(p^{N-1})}\cdot y^{(p^N)}\equiv 0 \quad(\mathrm{mod}\; \m_A).\]
But this implies that $y^{(p^N)}\equiv 0\quad(\mathrm{mod}\; \m_A^{p^N})$. Hence we get $y=V\cdot y^{(p)}\equiv 0\quad(\mathrm{mod}\; \m_A^{p^N+1})$. Repeting this argument, we get finally $y\equiv 0\quad(\mathrm{mod}\; \m_A^\ell)$ for all integers $\ell\geq 1$, so $y=0$. This implies that $\Lie(G^\vee)^{\varphi=1}\subset \Lie(G^{\et\vee})$, and the equation \eqref{formula-sol} is simplified as $U\cdot x^{(p)}=x$. Since the linearization of $\varphi_{G^\et}$ is bijective  by \ref{prop-ord}, we have $U\in\GL_r(A)$. Let $\overline{U}$ be the image of $U$ in $\GL_r(k)$, and $\Sol$ be the solutions of the equation 
$\overline{U}\cdot x^{(p)}=x.$ As $k$ is algebraically closed, $\Sol$ is an $\F_p$-space of dimension $r$, and $\Lie(G^{\et\vee})\otimes k$ is generated by $\Sol$ (cf. \cite[Prop. 4.1]{Ka}).  By the henselian property of $A$, every elements in $\Sol$ lifts uniquely to a solution of $U\cdot x^{(p)}=x$, \ie the reduction  map $\Lie(G^\vee)^{\varphi=1}\xra{\sim} \Sol$ is bijective. By Nakayama's lemma, $\Lie(G^\vee)^{\varphi=1}$ generates the $A$-module $\Lie(G^{\et\vee})$. 
\end{proof}

\subsection{} We keep the notations of \ref{nota-one-dim}. Let  $\Comp$ be the category of neotherian complete local $\Kb_0$-algebras with residue field $\Kb_0$,  $\D_{\cG_{\Kb_0}}$ (\resp $\D_{\cG^{\circ}_{\Kb_0}}$) be the  functor which associates to every object $A$ of $\Comp$ the set of isomorphic classes of deformations of $\cG_{\Kb_0}$ (\resp $\cG^\circ_{\Kb_0}$) . If $A$ is an object in $\Comp$ and $G$ is a deformation  of $\cG_{\Kb_0}$ (\resp $\cG^\circ_{\Kb_0}$) over $A$, we denote by $[G]$ its isomorphic class in $\D_{\cG_{\Kb_0}}(A)$ (\resp in $\D_{\cG^\circ_{\Kb_0}}$).

\begin{lemma}\label{lemma-one-dim-2}  Let $\Sigma$ be the set defined in \eqref{defn-Sigma}.

\emph{(i)} The morphism of sets $\Phi:\Sigma\ra \D_{\cG_{\Kb_0}}(\Rb)$ given by $\sigma\mapsto [\cG_{\Rb,\sigma}]$ is bijective.

\emph{(ii)}  Let $\sigma \in \Sigma$. Then there exists a basis of $\Lie(\cG^{\circ\vee}_{\Rb,\sigma})$ such that $\varphi_{\cG^{\circ}_{\Rb,\sigma}}$ is represented by a matrix of the form
\begin{equation}\label{congru-matrix}
\h^\circ_{\sigma}= \begin{pmatrix}0& 0 &\cdots&0 &a_1\\
 1&0&\cdots &0 &a_2\\
 \vdots&&\ddots&& \vdots\\
0&0&\cdots &1& a_{n-1}\end{pmatrix}
\end{equation}
with $a_i\equiv \alpha\cdot \sigma(t_i)\;(\mathrm{mod}\, \m^2_{\Rb})$ for $1\leq i \leq n-1$, where $\alpha\in \Rb^\times$ and $\m_{\Rb}$ is the maximal ideal of $\Rb$. In particular,
 $\cG^\circ_{\Rb,\sigma}$ is the universal deformation of $\cG^\circ_{\Kb_0}$ if and only if $\{\sigma(t_1),\cdots,\sigma(t_{n-1})\}$ is a system of regular parameters of $\Rb$.
\end{lemma}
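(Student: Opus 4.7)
I would prove (ii) first by an explicit Hasse-Witt computation and then use it for (i). By functoriality of the Hasse-Witt map under base change, the matrix of $\varphi_{\cG_{\Rb,\sigma}}$ in the basis inherited from $\bG$ is the matrix $\h(\sigma)$ obtained from $\h$ by applying $\sigma$ entrywise, so its last column has entries $-\sigma(t_i)$. By Sublemma \ref{sublemma-1}, $\Lie(\cG^{\et\vee}_{\Rb,\sigma})$ is the rank-one $\Rb$-submodule generated by an element $x=(x_1,\dots,x_n)^T$ fixed by $\varphi_{\cG_{\Rb,\sigma}}$. Unfolding $\h(\sigma)\,x^{(p)}=x$ gives the recursion $x_1=-\sigma(t_1)\,x_n^p$ and $x_i=x_{i-1}^p-\sigma(t_i)\,x_n^p$ for $2\le i\le n$. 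Reducing modulo $\m_\Rb$, and using $\sigma(t_i)\equiv 0$ for $i<n$ together with $\sigma(t_n)\equiv t_n$, one sees that $\overline{x_i}=0$ for $i<n$ and $\overline{x_n}\in\Kb_0^\times$ satisfies $\overline{x_n}^{p-1}=-1/t_n$; in particular $x_n$ is a unit of $\Rb$. Iterating the recursion modulo $\m_\Rb^2$, using that $x_i\in\m_\Rb$ forces $x_i^p\in\m_\Rb^p\subseteq\m_\Rb^2$, yields
\[x_i\equiv -\sigma(t_i)\,\overline{x_n}^p\pmod{\m_\Rb^2}\quad\text{for } 1\le i\le n-1.\]

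Next I would change basis from $(e_1,\dots,e_n)$ to $(e_1,\dots,e_{n-1},x)$ and compute the transformed matrix $P^{-1}\,\h(\sigma)\,P^{(p)}$. A direct calculation shows that $\varphi_{\cG_{\Rb,\sigma}}$ preserves $\Rb\cdot x=\Lie(\cG^{\et\vee}_{\Rb,\sigma})$, and that the induced endomorphism on the quotient $\Lie(\cG^{\circ\vee}_{\Rb,\sigma})$ is expressed by a cyclic matrix of the form \eqref{congru-matrix} with $a_i=-x_i/x_n$. Substituting the above estimate on $x_i$ gives $a_i\equiv (\overline{x_n}^p/x_n)\,\sigma(t_i)\pmod{\m_\Rb^2}$, so $\alpha:=\overline{x_n}^p/x_n\in\Rb^\times$ (a ratio of two units) does the job. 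The final assertion of (ii) then follows from Proposition \ref{prop-HW-versal}(ii), because $\alpha$ being a unit implies that the ideals $(a_1,\dots,a_{n-1})$ and $(\sigma(t_1),\dots,\sigma(t_{n-1}))$ of $\Rb$ coincide, and similarly for their images in $\m_\Rb/\m_\Rb^2$.

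For (i), I would enlarge $\Sigma$ to a functor on $\Comp$ by setting $\Sigma(A)=\{\sigma:R'\to A\text{ lifting }R'\to K_0\hra\Kb_0\}$. Any such $\sigma$ is determined by its restriction to $R$, which is a local $k$-algebra map $R\to A$ satisfying $\sigma(t_i)\in\m_A$ for $i<n$ and $\sigma(t_n)\equiv t_n\pmod{\m_A}$; conversely, any such restriction extends uniquely to $R'$ because $\sigma(\fp)\subseteq\m_A$ and $A$ is $\m_A$-adically complete. Hence $\Sigma$ is pro-represented by $\widetilde R:=\Kb_0[[u_1,\dots,u_n]]$ via the bijection $\sigma\leftrightarrow(\sigma(t_1),\dots,\sigma(t_{n-1}),\sigma(t_n)-t_n)$. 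Meanwhile, $\D_{\cG_{\Kb_0}}$ is pro-represented by $R_\Kb\simeq\Kb_0[[s_1,\dots,s_n]]$ by Proposition \ref{prop-deform}(ii). The natural transformation $\Phi$ is therefore induced by a $\Kb_0$-morphism $\psi\colon R_\Kb\to\widetilde R$ of complete regular local $\Kb_0$-algebras of the same dimension $n$, and $\Phi$ is a bijection at every $A$ if and only if $\psi$ is an isomorphism, if and only if its differential $d\psi$ at the closed point is an isomorphism.

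The bijectivity of $d\psi$ (equivalently its injectivity, since both tangent spaces are $n$-dimensional) follows from part (ii) combined with Proposition \ref{prop-def-surj}(ii): applied over $A=\Kb_0[\epsilon]/\epsilon^2$, (ii) shows that the first $n-1$ coordinates $u_1,\dots,u_{n-1}$ produce $n-1$ linearly independent infinitesimal deformations of $\cG^\circ_{\Kb_0}$ (via the parameters $a_i$), which by Proposition \ref{prop-HW-versal}(ii) span the full $(n-1)$-dimensional tangent space of $\D_{\cG^\circ_{\Kb_0}}$, while $u_n$ parametrizes a transversal direction, identified by Proposition \ref{prop-def-surj}(ii) with the complementary $\Ext^1(\Q_p/\Z_p,\cG^\circ_{\Kb_0})$-factor in the tangent of $\D_{\cG_{\Kb_0}}$. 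The main obstacle will be to verify that varying $u_n$ genuinely deforms the extension class of the connected-étale sequence of $\cG_{R',\sigma}$ rather than merely altering the trivialization of its étale part; this should come out of the Hensel-lift construction of $x$, in which $\overline{x_n}$ depends nontrivially on $t_n$.
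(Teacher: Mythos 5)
Your argument for part (ii) is essentially the paper's proof. You set up the same fixed-point system $\h(\sigma)x^{(p)}=x$, solve it modulo $\m_{\Rb}^2$ by the same recursion (noting $x_i\in\m_\Rb$ for $i<n$ forces $x_{i-1}^p\in\m_\Rb^2$), deduce that $x_n$ is a unit and $x_i\equiv-\sigma(t_i)x_n^p\pmod{\m_\Rb^2}$, pass to the quotient to compute the induced Hasse--Witt matrix with $a_i=-x_i/x_n$, and apply Proposition~\ref{prop-HW-versal}(ii). Your normalization $\alpha=\overline{x_n}^p/x_n$ and the paper's $\alpha=\lambda_n^{p-1}$ differ only modulo $\m_\Rb$, which is irrelevant since they multiply an element of $\m_\Rb$.

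Part (i) is where you diverge from the paper, and where a genuine gap remains. Your setup is sound: enlarging $\Sigma$ to a functor on $\Comp$ pro-represented by $\widetilde R\simeq\Kb_0[[u_1,\dots,u_n]]$, noting $\D_{\cG_{\Kb_0}}$ is pro-represented, and reducing the bijectivity of $\Phi$ to the bijectivity of the tangent map $d\psi$ of the induced $\psi\colon R_{\Kb}\to\widetilde R$. But your verification of $d\psi$ bijectivity by dimension count only secures $n-1$ directions: combining (ii) with \ref{prop-HW-versal}(ii) shows that $u_1,\dots,u_{n-1}$ map to linearly independent directions in the tangent space of $\D_{\cG^\circ_{\Kb_0}}$, hence they are independent in $\D_{\cG_{\Kb_0}}$ and avoid $\ker d\Theta$. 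But you only get that $d\psi(\partial/\partial u_n)$ lies \emph{in} the one-dimensional $\ker d\Theta$, not that it is nonzero there. You flag this yourself (``the main obstacle will be to verify that varying $u_n$ genuinely deforms the extension class''), and the hint that it ``should come out of the Hensel-lift construction'' does not amount to a proof. The paper avoids this issue entirely: it proves $\Phi_\nu\colon\Sigma_\nu\to\D_{\cG_{\Kb_0}}(\Rb_\nu)$ bijective by induction on $\nu$, observing that for a small extension both the fiber of $\Sigma_\nu\to\Sigma_{\nu-1}$ and the fiber of $\D(\Rb_\nu)\to\D(\Rb_{\nu-1})$ are (nonempty) torsors under groups, and $\Phi_\nu$ is equivariant for the Kodaira--Spencer homomorphism $\Kod_{R'}\otimes_{R'}I_\nu$, which is an isomorphism because $\Kod_{R'}$ is (base-changed from the universal $\bG$). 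If you wished to rescue your route, the clean fix is to observe that $d\psi$ \emph{is} $\Kod_{R'}\otimes_{R'}\Kb_0$ under the identifications you set up, rather than trying to split the tangent space by hand; as written, the hands-on dimension count does not close.
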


\begin{proof} (i) We begin with a remark on the Kodaira-Spencer map of $\cG_{R'}$. Let $\cT_{\bS/k}=\cHom_{\cO_{\bS}}(\Omega^1_{\bS/k},\cO_{\bS})$ be the tangent sheaf of $\bS$.  Since $\bG$ is universal, the Kodaira-Spencer map \eqref{Kod-map}
\[\Kod: \cT_{\bS/k}\xra{\sim} \cHom_{\cO_{\bS}}(\omega_{\bG},\Lie(\bG^\vee)) \]
 is an isomorphism.  By functoriality, this induces an isomorphism of $R'$-modules
\begin{equation}\label{Kod-R'}
\Kod_{R'}:T_{R'/k}\xra{\sim} \Hom_{R'}(\omega_{\cG_{R'}},\Lie(\cG_{R'}^\vee)), 
\end{equation}
where $T_{R'/k}=\Hom_{R'}(\Omega^1_{R'/k},R')=\Gamma(\bS,\cT_{\bS/k})\otimes_R R'$.

For each integer $\nu\geq 0$, we put $\Rb_{\nu}=\Rb/\m_{\Rb}^{\nu+1}$, $\Sigma_{\nu}$ to be the set of liftings of $R\ra K_0\ra \Kb_0$ to $R\ra \Rb_{\nu}$, and  $\Phi_{\nu}:\Sigma_{\nu}\ra \D_{\cG_{\Kb_0}}(\Rb_\nu)$ to be the morphism of sets $\sigma_{\nu}\mapsto [\cG_{R'}\otimes_{\sigma_{\nu}} \Rb_{\nu}]$. We prove by induction on $\nu$ that $\Phi_\nu$ is bijective  for all $\nu\geq 0$. This will complete the proof of (i). For $\nu=0$, the claim holds trivially. Assume that it holds for $\nu-1$ with $\nu\geq 1$. We have a commutative diagram
\[\xymatrix{\Sigma_{\nu}\ar[d]\ar[rr]^{\Phi_{\nu}}&&\D_{\cG_{\Kb_0}}(\Rb_{\nu})\ar[d]\\
\Sigma_{\nu-1}\ar[rr]^{\Phi_{\nu-1}}&&\D_{\cG_{\Kb_0}}(\Rb_{\nu-1}),}\]
where the vertical arrows are the canonical reductions, and the lower arrow is an isomorphism by induction hypothesis. Let $\tau$ be an arbitrary element of $\Sigma_{\nu-1}$. We denote by $\Sigma_{\nu,\tau}\subset \Sigma_{\nu}$ the preimage of  $\tau$, and by $\D_{\Phi_{\nu-1}(\tau)}(\Rb_\nu)\subset \D_{\cG_{\Kb_0}}(\Rb_\nu)$ the preimage of $\Phi_{\nu-1}(\tau)$. It suffices to prove that $\Phi_{\nu}$ induces a bijection between $\Sigma_{\nu,\tau}$ and $\D_{\Phi_{\nu-1}(\tau)}(\Rb_\nu)$.  Let $I_{\nu}=\m_{\Rb}^{\nu}/\m_{\Rb}^{\nu+1}$ be the ideal of  the reduction map  $\Rb_{\nu}\ra \Rb_{\nu-1}$. By [EGA $\mathrm{0_{IV}}$ 21.2.5 and 21.9.4], we have  $\Omega^1_{R'/k}\simeq \widehat{\Omega}^1_{R'/k}$, and they are free over $A$ of rank $n$.  By [EGA $\mathrm{0_{IV}}$ 20.1.3], $\Sigma_{\nu,\tau}$ is a (nonempty) homogenous space under the group 
$$
\Hom_{K_0}(\Omega^1_{R'/k}\otimes_{R'}K_0, I_{\nu})=T_{R'/k}\otimes_{R'}I_{\nu}.
$$ 
On the other hand,  according to \ref{prop-deform}(i), $\D_{\Phi_{\nu-1}(\tau)}(\Rb_{\nu})$ is a homogenous space under the group 
$$\Hom_{\Kb_0}(\omega_{\cG_{\Kb_0}},\Lie(\cG^{\vee}_{\Kb_0}))\otimes_{\Kb_0}I_{\nu}=\Hom_{R'}(\omega_{\cG_{R'}},\Lie(\cG^\vee_{R'}))\otimes_{R'} I_{\nu}.$$
 Moreover, it is easy to check that the morphism of sets $\Phi_{\nu}:\Sigma_{\nu,\tau}\ra \D_{\Phi_{\nu-1}(\tau)}(\Rb_{\nu})$ is compatible with the homomorphism of groups
\[\Kod_{R'}\otimes_{R'}\Id:T_{R'/k}\otimes_{R'}I_{\nu}\ra \Hom_{R'}(\omega_{\cG_{R'}},\Lie(\cG^\vee_{R'}))\otimes_{R'} I_{\nu},\]
where $\Kod_{R'}$ is  the Kodaira-Spencer map \eqref{Kod-R'} associated to $\cG_{R'}$. The bijectivity of $\Phi_{\nu}$ now follows from the fact that $\Kod_{R'}$ is an isomorphism.

 (ii) First, we determine the submodule $\Lie(\cG^{\et\vee}_{\Rb,\sigma})$ of $\Lie(\cG^\vee_{\Rb,\sigma})$. We choose a basis of $\Lie(\bG^\vee)$ over $\cO_{\bS}$ such that $\varphi_{\bG}$ is expressed by the matrix $\h$ \eqref{HW-one-dim}. As $\cG_{\Rb,\sigma}$ derives from $\bG$ by base change $R\ra R'\xra{\sigma}\Rb$,  there exists a basis $(e_1,\cdots,e_n)$ of $\Lie(\cG_{\Rb,\sigma}^\vee)$ such that $\varphi_{\cG_{\Rb,\sigma}}$ is expressed by 
\[\h^{\sigma}=\begin{pmatrix}
0&0 &\cdots& 0& -\sigma(t_1)\\
1&0&\cdots& 0& -\sigma(t_2)\\
\vdots& &\ddots &&\vdots\\
0 &0&\cdots &1&-\sigma(t_n) 
\end{pmatrix}.
\] By Lemma \ref{sublemma-1}, $\Lie(\cG^{\et\vee}_{\Rb,\sigma})$ is generated by $\Lie(\cG^{\vee}_{\Rb,\sigma})^{\varphi=1}$. If $\sum_{i=1}^nx_ne_n\in \Lie(\cG_{\Rb,\sigma}^\vee)^{\varphi=1}$ with $x_i\in \Rb$ for $1\leq i\leq n$, then $(x_i)_{1\leq i\leq n}$ must satisfy the equation
$\h^\sigma\cdot 
\begin{pmatrix}x^p_1\\
\vdots \\
x^p_n\end{pmatrix}=\begin{pmatrix}x_1\\ \vdots \\ x_n
\end{pmatrix};$
or equivalently,
\begin{equation}\label{equ-sol}
\begin{cases}x_1=-\sigma(t_1)x_n^p\\
x_2=-\sigma(t_2)x_n^p-\sigma(t_1)^px_n^{p^2}\\
\cdots\\
x_{n-1}=-\sigma(t_{n-1})x_n^p-\cdots-\sigma(t_1)^{p^{n-2}}x_n^{p^{n-1}}\\
\sigma(t_1)^{p^{n-1}}x_n^{p^n}+\sigma(t_2)^{p^{n-2}}x_n^{p^{n-1}}+\cdots +\sigma(t_n)x_n^{p}+x_n=0.
\end{cases}
\end{equation}
We note that $\sigma(t_i)\in \m_{\Rb}$ for $1\leq i\leq n-1$ and $\sigma(t_n)\in\Rb^\times$ with image $i(t_n)\in \Kb_0$, where $i:K_0\ra \Kb_0$ is the fixed immbedding. By Hensel's lemma, every solution in $\Kb_0$ of the equation $i(t_n)x_n^{p}+x_n=0$ lifts uniquely to a solution of \eqref{equ-sol}. As $\Lie(\cG^{\et\vee}_{\Rb,\sigma})$ has rank $1$, by Lemma \ref{sublemma-1}, these are all the solutions. Let $(\lambda_1,\cdots,\lambda_n)$ be a non-zero solution of \eqref{equ-sol}. We have 
\begin{equation}\label{congru-lam}\lambda_n\in \Rb^\times\quad \text{and}\quad \lambda_i\equiv -\lambda_n^p\sigma(t_i)\quad(\mathrm{mod}\;\m_{\Rb}^2).\end{equation}
We put $v=\lambda_1e_1+\cdots+\lambda_ne_n$;  so $v$ is a basis of $\Lie(\cG^{\et\vee}_{\Rb,\sigma})$ by \ref{sublemma-1}.  For $1\leq i\leq n$, let $f_i$ be the image of $e_i$ in $\Lie(\cG^{\circ\vee}_{\Rb,\sigma})$. Then $f_1,\cdots,f_n$ clearly generate $\Lie(\cG^{\circ\vee}_{\Rb,\sigma})$. By the explicit description above of $\Lie(\cG^{\et\vee}_{\Rb,\sigma})$, we have $f_n=-\lambda^{-1}_n(\lambda_1f_1\cdots+\lambda_{n-1}f_{n-1})$. Hence $f_1,\cdots,f_{n-1}$ form  a basis of $\Lie(\cG^{\circ\vee}_{\Rb,\sigma})$. By the functoriality of Hasse-Witt maps, we have $\varphi_{\cG^{\circ}_{\Rb}}(f_i)=f_{i+1}$ for $1\leq i\leq n-1$, or equivalently,
\[\varphi_{\cG^\circ_{\Rb,\sigma}}(f_1,\cdots,f_{n-1})=(f_1,\cdots,f_{n-1})\cdot 
\begin{pmatrix}0&0&\cdots&0&-\lambda_n^{-1}\lambda_1\\
1&0&\cdots&0&-\lambda_n^{-1}\lambda_2\\
\vdots &&\ddots&&\vdots\\
0&0&\cdots&1&-\lambda_n^{-1}\lambda_{n-1}
\end{pmatrix}.\]
In view of \eqref{congru-lam}, we see that the above matrix has the form of \eqref{congru-matrix} by setting $\alpha=\lambda_n^{p-1}\in \Rb^\times$. The second part of statement (ii) follows immediately from Proposition \ref{prop-HW-versal}(ii) and the description above of $\varphi_{\cG^{\circ}_{\Rb,\sigma}}$.
\end{proof}

\begin{lemma}\label{lemma-lifting}
Let $F$ be a field with the discrete topology, $A$ be a noetherian local complete and formally smooth $F$-algebra, $C$ be an adic topological $F$-algebra, $J\subset C$ be an ideal of definition $($\ie $C=\varprojlim_{n}C/J^{n+1})$, $g:A\ra C/J$ be a continuous homomorphism of topological $F$-algebras. Let $t_1,\cdots,t_n$ be elements in $A$ such that $dt_1,\cdots, dt_{n}$ form a basis of $\widehat{\Omega}^1_{A/F}$ over $A$, and $a_1,\cdots, a_n\in C$ be such that the image of $a_i$ in $C/J$ is $g(t_i)$ for $1\leq i\leq n$. Then there exists a unique continuous homomorphism of topological $F$-algebras $h:A\ra C$ which lifts  $g$ and satisfies $h(t_i)=a_i$ for $1\leq i\leq n$.
\end{lemma}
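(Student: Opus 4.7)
The plan is to construct $h$ as the inverse limit of a compatible system of continuous $F$-algebra homomorphisms $h_m\colon A\to C/J^{m+1}$, with $h_0=g$ and $h_m(t_i)\equiv a_i\pmod{J^{m+1}}$, built inductively using the formal smoothness of $A$ over $F$. Suppose $h_m$ has been constructed. The surjection $C/J^{m+2}\to C/J^{m+1}$ is a square-zero extension with kernel $J^{m+1}/J^{m+2}$, so formal smoothness of $A$ yields a continuous $F$-algebra homomorphism $h_{m+1}'\colon A\to C/J^{m+2}$ lifting $h_m$. The elements $\delta_i:=a_i-h_{m+1}'(t_i)\in J^{m+1}/J^{m+2}$ then measure the failure of $h_{m+1}'$ to satisfy the required normalization, and I will correct $h_{m+1}'$ by a derivation.

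Since $J^{m+1}/J^{m+2}$ is a square-zero ideal in $C/J^{m+2}$, it carries a natural $A$-module structure through $h_0=g$, and the difference of any two lifts of $h_m$ is a continuous $F$-derivation $A\to J^{m+1}/J^{m+2}$. The hypothesis that $dt_1,\ldots,dt_n$ form a basis of $\widehat{\Omega}^1_{A/F}$ over $A$ implies that the evaluation map
\[
\Hom_A^{\mathrm{cts}}(\widehat{\Omega}^1_{A/F},J^{m+1}/J^{m+2})\;\lra\;(J^{m+1}/J^{m+2})^n,\qquad D\mapsto(D(t_1),\ldots,D(t_n)),
\]
is a bijection. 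Hence there is a unique continuous $F$-derivation $D\colon A\to J^{m+1}/J^{m+2}$ with $D(t_i)=\delta_i$, and $h_{m+1}:=h_{m+1}'+D$ is a continuous $F$-algebra homomorphism lifting $h_m$ with $h_{m+1}(t_i)\equiv a_i\pmod{J^{m+2}}$. The same torsor description gives uniqueness at this step: two candidates for $h_{m+1}$ differ by a continuous $F$-derivation $A\to J^{m+1}/J^{m+2}$ vanishing on each $t_i$, and such a derivation is zero.

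Taking the inverse limit $h:=\varprojlim_m h_m\colon A\to C=\varprojlim_m C/J^{m+1}$ produces the desired continuous $F$-algebra homomorphism, and uniqueness at each finite level propagates to uniqueness of $h$. The only subtlety I foresee is the topological version of formal smoothness used at the inductive step: one must know that continuous lifts exist across the adic small extension $C/J^{m+2}\to C/J^{m+1}$, which is standard for a noetherian complete formally smooth $F$-algebra $A$ (and will be applied in the paper only to formal power series rings, where it is immediate). Everything else is essentially the universal property of the module of continuous K\"ahler differentials combined with a mechanical induction.
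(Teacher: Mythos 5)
Your proof is correct and follows essentially the same route as the paper: an inductive construction of compatible lifts $h_m\colon A\to C/J^{m+1}$ using formal smoothness at each square-zero step, with the choice pinned down (and made unique) by the fact that lifts of $h_m$ form a torsor under continuous derivations $\widehat{\Omega}^1_{A/F}\to J^{m+1}/J^{m+2}$ and $dt_1,\dots,dt_n$ is a basis. The only cosmetic difference is that the paper phrases the step directly in the torsor language (citing EGA $\mathrm{0_{IV}}$ 20.7.14.4 and 20.1.3, and noting that $g$ factors through some $A/\m_A^{\ell}$ to identify the Hom-group), whereas you first pick an arbitrary lift and then correct it by the unique derivation — the same argument.
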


\begin{proof} For each integer $\nu\geq 0$, we put $C_{\nu}=C/J^{\nu+1}$. It suffices to prove that there exists, for every integer $\nu\geq 0$, a unique continuous homomorphism of topological $F$-algebras $h_\nu:A\ra C_\nu$ which lifts $g=h_0$ and verifies $h_\nu(t_i)\equiv a_i\quad (\mathrm{mod}\;J^{\nu+1})$. We proceed by induction on $\nu\geq 0$. For $\nu= 0$, the assertion is trivial. Suppose that $\nu\geq 1$ and the  required homomorphism $h_{\nu-1}:A\ra C_{\nu-1}$ exists uniquely. Since $A$ is formally smooth over $F$, by [EGA $\mathrm{0_{IV}}$ 20.7.14.4 and 20.1.3], the set of continuous homomorphisms $A\ra C_{\nu}$ lifting $h_{\nu-1}$ is a homogeneous space under the group 
$\mathrm{Hom.cont}_{A}(\widehat{\Omega}^1_{A/F},J^{\nu}/J^{\nu+1}),$
where $\mathrm{Hom.cont}_A$ denotes the group of continuous homomorphisms of topological modules over $A$. Since $C/J$ is a discrete topological ring, there exists an inteter $\ell\geq 0$, such that the continuous map $g:A\ra C/J$ factors through the canonical surjection $A\ra A/\m_A^{\ell}$, where $\m_A$ is the maximal ideal of $A$. Note that $J^{\nu}/J^{\nu+1}$ is a $C/J$-module; so we have 
\[\mathrm{Hom.cont}_A(\widehat{\Omega}^1_{A/F},J^{\nu}/J^{\nu+1})=\Hom_{A/\m_A^{\ell}}(\widehat{\Omega}^1_{A/F}\otimes A/\m_{A}^\ell,J^{\nu}/J^{\nu+1}).\] 
Now let $\widetilde{h}_{\nu}:A\ra C_{\nu}$ be an arbitrary continuous lifting of $h_{\nu-1}$; then any other liftings of $h_{\nu-1}$ to $C_{\nu}$ writes as $\widetilde{h}_{\nu}+\delta$ with $\delta \in \Hom_{A/\m_A^{\ell}}(\widehat{\Omega}^1_{A/F}\otimes A/\m_{A}^\ell,J^{\nu}/J^{\nu+1})$. By assumption, $dt_1,\cdots, dt_{n}$ being a basis of $\widehat{\Omega}^1_{A/F}$, there exists thus a unique $\delta_0$ such that 
$\delta_0(t_i)\equiv a_i-\widetilde{h}_{\nu}(t_i) \quad (\mathrm{mod}\; J^{\nu+1}).$
Then $h_{\nu}=\widetilde{h}_{\nu}+\delta_0$ is the unique continuous homomorphism $A\ra C_{\nu}$ lifting $g$ and satisfying $h_\nu(t_i)\equiv a_i \quad (\mathrm{mod}\; J^{\nu+1}).$ This completes the induction.
\end{proof}

Now we can turn to the proof of \ref{lemma-key}.

\subsection{Proof of Lemma \ref{lemma-key}} First,  suppose that we have found a $\sigma_2\in \Sigma$ such that $\cb_{\sigma_2}\neq 0$ and $\cG^\circ_{\Rb,\sigma_2}$ is the universal deformation of $\cG^\circ_{\Kb_0}$.  Since $\Phi:\Sigma\xra{\sim} \D_{\cG_{\Kb_0}}(\Rb)$ is bijective by \ref{lemma-one-dim-2}(i), there  exists a $\sigma_1\in \Sigma$ corresponding to the deformation $[\cG^\circ_{\Rb,\sigma_2}\oplus \Q_p/\Z_p]\in \D_{\cG_{\Kb_0}}(\Rb)$. It is clear that $\cG^{\circ}_{\Rb,\sigma_1}\simeq \cG^{\circ}_{\Rb,\sigma_2}$. Besides,  the exact sequence \eqref{filt-Tate-R'} for  $\sigma_1$ splits; so we have $C_{\sigma_1}=0$. It remains to prove the existence of $\sigma_2$. We note first that $\Kb_0$ can be canonically imbedded  into $\Rb$, since it is perfect. Since $R'$ is formally smooth over $k$ and $(dt_1,\cdots,dt_n)$ is a basis of $\widehat{\Omega}^1_{R'/k}\simeq \Omega^1_{R'/k}$, Lemma \ref{lemma-lifting} implies that there is a $\sigma \in \Sigma$ such that $\sigma(t_i)\;(1\leq i\leq n-1)$ form a system of regular parameters of $\Rb$ and $\sigma(t_n)\in \Kb_0\subset \Rb$. We claim that  $\sigma_2=\sigma$  answers the question. In fact,  Lemma \ref{lemma-one-dim-2}(ii) implies that $\cG^\circ_{\Rb,\sigma}$ is the universal deformation of $\cG^\circ_{\Kb_0}$. It remains to verify  that $\cb_{\sigma}\neq 0$.

 Let $A=\Kb_0[[\pi]]$ be a complete discrete valuation ring of characteristic $p$ with residue field $\Kb_0$, $T=\Spec(A)$, $\xi$ be the generic point of $T$, $\xib$ be a geometric over $\xi$, and $I=\Gal(\xib/\xi)$ the Galois group. We define  a homomorphism of $\Kb_0$-algebras $f^*:\Rb\ra A$ by putting $f^*(\sigma(t_1))=\pi$ and $f^*(\sigma(t_i))=0$ for $2\leq i\leq n-1$. This is possible, since $(\sigma(t_1),\cdots, \sigma(t_{n-1}))$ is a system of regular parameters of $\Rb$.   Let $f:T\ra \Sb$ be the homomorphism of schemes corresponding to $f^*$, and $\cG_T=\cG_{\Rb,\sigma}\times_{\Sb}T$. By the functoriality of Hasse-Witt maps, 
\[\h_{T}=\begin{pmatrix}0&0&\cdots&0 &-\pi\\
1&0&\cdots&0&0\\
0&1&\cdots&0&0\\
\vdots&&\ddots&&\vdots\\
0&0&\cdots&1&-f^*(\sigma(t_n))\end{pmatrix}\in \rM_{n\times n}(\Rb)\]
is a matrix of $\varphi_{\cG_{T}}.$ By definition \eqref{defn-hw-index}, the Hasse invariant of $\cG_T$ is $h(\cG_{T})=1$. In particular, $\cG_T$ is generically ordinary. Let $\widetilde{U}'_\sigma\subset \Sb$ be the ordinary locus of $\cG_{\Rb,\sigma}$. We have $f(\xi)\in \widetilde{U}'_\sigma$. By the functoriality of fundamental groups, $f$ induces a homomorphism of groups
\[\pi_1(f):I=\Gal(\xib/\xi)\ra \pi_1(\widetilde{U}'_\sigma,f(\xib))\simeq \pi_1(\widetilde{U}'_\sigma,\xb).\]
 Let $\cG^\circ_T$ be the connected part of $\cG_T$, and $\cG_{T}^\et$ be the \'etale part of $\cG_{T}$. Then $\cG^\et_{T}\simeq\Q_p/\Z_p$. We have an exact sequence of $\F_p[I]$-modules
\[0\ra \cG^\circ_T(1)(\xib)\ra \cG_T(1)(\xib)\ra \cG^\et_T(1)(\xib)\ra 0,\]
which determines a cohomology class $\cb_T\in H^1(I,\cG_T^\circ(1)(\xib))$. We notice that $\cG_T(1)(\xib)$ is isomorphic to $\cG_{\Rb,\sigma}(1)(\xb)$ as an abelian group, and the action of $I$ on $\cG_T(1)(\xib)$ is induced by the action of $\pi_1(\widetilde{U}'_\sigma,\xb)$ on $\cG_{\Rb,\sigma}(1)(\xb)$. Therefore, $\cb_T$ is the image of $\cb_\sigma$ by the functorial map
\[H^1\bigl(\pi_1(\widetilde{U}'_\sigma,\xb),\cG^\circ_{\Rb,\sigma}(1)(\xb)\bigr)\ra H^1\bigl(I,\cG^\circ_T(1)(\xib)\bigr).\]
To verify that $\cb_\sigma\neq 0$, it suffices to check that $\cb_T\neq0$.  We consider the polynomial $P(X)=X^{p^n}+f^*(\sigma(t_n))X^{p^{n-1}}+\pi X\in A[X]$. According to \ref{cor-non-zero-coh}, it suffices to find a $\alpha\in \Kb_0\subset A$ such that $P(\alpha)$ is a uniformizer of $A$. But by the choice of $\sigma$, we have $\sigma(t_n)\in \Kb_0$ and $\sigma(t_n)\neq 0$; so $f^*(\sigma(t_n))\neq 0$ lies in $\Kb_0$. Let  $\alpha$ be a $p^{n-1}(p-1)$-th root of $-f^*(\sigma(t_n))$ in $\Kb_0$. Then  we have $\alpha\in \Kb_0^\times$, and $P(\alpha)=\alpha\pi$ is a uniformizer of $A$. This completes the proof of  \ref{lemma-key}.

\section{End of the Proof of Theorem \ref{thm-main}}

In this section, $k$ denotes an algebraically closed field of characteristic $p>0$.
\subsection{}\label{New-strata} First, we recall some preliminaries on  Newton stratification due to F. Oort. Let $G$ be an arbitrary BT-group over $k$, $\bS$ be the local moduli of $G$ in characteristic $p$, and $\bG$ be the universal deformation of $G$ over $\bS$ \eqref{defn-moduli}. Put $d=\dim (G)$ and $c=\dim(G^\vee)$. We denote by $\cN(G)$  the Newton polygon of $G$ which has endpoints $(0,0)$ and $(c+d,d)$. Here we use the normalization of Newton polygons such that slope 0 corresponds to \'etale BT- groups and slope 1 corresponds to groups of multiplicative type.

Let $\Nt(c+d,d)$ be the set of Newton polygons with endpoints $(0,0)$ and $(c+d,d)$ and slopes in $(0,1)$.  For $\alpha,\beta \in \Nt(c+d,d)$, we say that $\alpha\preceq \beta$ if no point of $\alpha$ lies below $\beta$; then ``$\preceq$'' is a partial order on $\Nt(c+d,d)$. For each $\beta\in \Nt(c+d,d)$, we denote by $V_\beta$ the subset of $\bS$ consisting of points $x$ with $\cN(\bG_x)\preceq\beta$, and by $V_\beta^\circ$ the subset of $\bS$ consisting of points $x$ with $\cN(\bG_x)=\beta$. By Grothendieck-Katz's specialization theorem of Newton polygons, $V_\beta$ is closed in $\bS$, and $V_\beta^\circ$ is open (maybe empty) in $V_\beta$. We put 
$$
\diamondsuit(\beta)=\{(x,y)\in \Z\times \Z\;|\; 0\leq y<d, y<x<c+d, (x,y) \text{ lies on or above the polygon } \beta\},
$$ and $\dim(\beta)=\#(\diamondsuit(\beta))$.
 
\begin{thm}[\cite{oort2} Theorem 2.11]\label{thm-oort} Under the above assumptions, for each $\beta\in \Nt(c+d,d)$, the subset $V^\circ_\beta$ is non-empty if and only if $\cN(G)\preceq \beta$. In that case, $V_\beta$ is the closure of $V^\circ_\beta$ and 
all irreducible components of $V_\beta$ have dimension $\dim(\beta)$.
\end{thm}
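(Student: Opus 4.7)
The plan is to establish Theorem~\ref{thm-oort} in three parts: closedness of $V_\beta$ and the forward direction of the non-emptiness criterion; the construction of deformations realizing any $\beta$ below $\cN(G)$; and the density of $V_\beta^\circ$ in $V_\beta$ together with the dimension formula.

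First, closedness of $V_\beta$ and the implication $V_\beta^\circ\neq\emptyset\Rightarrow\cN(G)\preceq\beta$ both follow from Grothendieck--Katz's specialization theorem for Newton polygons. Upper semicontinuity of $x\mapsto\cN(\bG_x)$ gives that the locus $\{x:\cN(\bG_x)\preceq\beta\}$ is Zariski closed, and $V_\beta^\circ$ is open in $V_\beta$ as the complement of the finitely many closed subsets $V_{\beta'}$ for $\beta'\prec\beta$. Since $\bS$ is local, any nonempty closed subscheme contains the closed point $x_0$, at which $\cN(\bG_{x_0})=\cN(G)$; hence $V_\beta\neq\emptyset$ forces $\cN(G)\preceq\beta$.

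For the converse $\cN(G)\preceq\beta\Rightarrow V_\beta^\circ\neq\emptyset$, one must explicitly construct a deformation of $G$ with generic Newton polygon $\beta$. My approach would be via Zink's equivalence between formal $p$-divisible groups and displays: write the display of $G$ in a normal form (when $a(G)=1$, one may use the HW-cyclic form from Section~4 directly; the general case reduces to this up to isogeny, refining by central leaves as needed) and then perturb the Frobenius matrix by power series in appropriate parameters $t_1,\dots,t_N$, arranging the valuations so that the generic Newton polygon is precisely $\beta$. This is essentially Oort's resolution of the Grothendieck conjecture and is the deepest step; the core difficulty is a delicate parameter count showing that every polygon with $\cN(G)\preceq\beta$ is attainable by such a perturbation.

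For the density and dimension claims, I would apply the construction in step~2 to the local deformation at each generic point $\eta$ of an irreducible component $Z$ of $V_\beta$: if one had $\cN(\bG_\eta)\prec\beta$, then the Grothendieck conjecture applied to $\bG_\eta$ would produce nearby deformations of $\bG_\eta$ with Newton polygon exactly $\beta$, contradicting the fact that $\eta$ is generic in $Z\subset V_\beta$. Hence $\cN(\bG_\eta)=\beta$ for every such $\eta$, which gives $V_\beta=\overline{V_\beta^\circ}$. The dimension formula $\dim V_\beta=\#\diamondsuit(\beta)$ would then follow by induction over the poset of Newton polygons, starting from $V_{\beta_{\mathrm{ord}}}=\bS$ of dimension $cd=\#\diamondsuit(\beta_{\mathrm{ord}})$ and applying de~Jong--Oort's purity theorem: each covering $\beta\prec\beta'$ in the poset removes exactly one lattice point from $\diamondsuit$ and the closed stratum $V_{\beta'}$ has pure codimension one in $V_\beta$. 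The main obstacle throughout remains step~2, the explicit construction of deformations with prescribed Newton polygon, which is where the genuine display-theoretic content of Oort's theorem lies.
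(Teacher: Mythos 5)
This theorem is imported verbatim from Oort (\cite{oort2}, Theorem 2.11); the paper cites it and provides no proof of its own, so there is no in-paper argument to compare your proposal against. What you have written is a plausible high-level outline of Oort's actual proof, and you correctly locate the deep content: the implication $\cN(G)\preceq\beta\Rightarrow V_\beta^\circ\neq\emptyset$ is precisely Grothendieck's conjecture on deformations of Newton polygons, which Oort resolves via display-theoretic normal forms and careful deformation of the Frobenius matrix. Your step 1 (Grothendieck--Katz specialization giving closedness of $V_\beta$, plus specialization to the closed point for the forward implication) and your step 3 (de~Jong--Oort purity plus the fact that covering relations in the poset of Newton polygons remove exactly one lattice point from $\diamondsuit(\beta)$, giving pure codimension one at each step) are both accurate descriptions of ingredients in Oort's argument.

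However, you should be aware that your step 2 is a placeholder rather than a proof. ``Perturb the Frobenius matrix by power series in appropriate parameters, arranging the valuations so that the generic Newton polygon is precisely $\beta$'' names the goal without saying how it is achieved, and the reduction of the general case to the $a(G)=1$ (HW-cyclic) case ``up to isogeny, refining by central leaves as needed'' is substantially more delicate than this phrasing suggests: passing through an isogeny changes the deformation space, and the interaction between the Newton stratification, the leaf foliation, and the isogeny correspondence is a major technical component of Oort's papers, not a routine reduction. Also, your step 3 as written assumes what you need for step 2 (you apply ``the Grothendieck conjecture applied to $\bG_\eta$'' inside the density argument), so the logical dependence is circular unless step 2 is independently established at each generic point. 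None of this affects the present paper, which uses the theorem as a black box; but as a self-contained proof attempt, the sketch would need the construction in step 2 to be carried out in detail before it could be considered complete.
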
 
 
\subsection{} Let $G$ be a connected
and HW-cyclic BT-group over $k$ of dimension $d=\dim(G)\geq 2$. Let  $\beta\in \Nt(c+d,d)$ be the Newton polygon given by the following slope sequence:
\[ \beta=(\underbrace{1/(c+1),\cdots,1/(c+1)}_{c+1},\underbrace{1,\cdots,1}_{d-1}).\]
We have $\cN(G)\preceq \beta$ since $G$ is supposed to be
connected. By Oort's Theorem \ref{thm-oort}, $V_\beta$ is a equal dimensional
closed subset of the local moduli $\bS$ of  dimension $c(d-1)$. We endow $V_\beta$ with the structure of  a reduced closed subscheme of $\bS$. 

\begin{lemma} Under the above assumptions, let $R$ be the ring of $\bS$,
 and 
\[
\begin{pmatrix}0 &0 &\cdots &0 &-a_1\\
1 &0 &\cdots &0 &-a_2\\
0&1&\cdots&0&-a_3\\
\vdots &&\ddots &&\vdots\\
0&0&\cdots&1&-a_c\end{pmatrix}\in \rM_{c\times c}(R)
\]
  be a matrix of the Hasse-Witt map $\HW_G$. Then the closed reduced subscheme $V_\beta$  of $\bS$ is defined by the prime ideal  $(a_1,\cdots,a_c)$. In particular, $V_\beta$ is irreducible.
\end{lemma}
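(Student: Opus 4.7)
The plan is to realize both $V_\beta$ and $V(a_1,\dots,a_c)$ as the locus of connected fibers of $\bG$. First, since the special fiber $G$ is HW-cyclic, Lemma \ref{lemma-cyclic}(i) implies $\bG$ is HW-cyclic, and hence so is every geometric fiber $\bG_x$. By Lemma \ref{lemma-HW-V}(ii), a fiber $\bG_x$ is connected if and only if $a_1(x)=\cdots=a_c(x)=0$, so set-theoretically $V(a_1,\dots,a_c)$ equals the connected locus of $\bG$. Moreover, by Proposition \ref{prop-HW-versal}(i) applied to $\bG$ (which is versal as the universal deformation), the elements $a_1,\dots,a_c$ are part of a regular system of parameters of $R$; hence $(a_1,\dots,a_c)$ is a prime ideal of height $c$, and $R/(a_1,\dots,a_c)$ is regular local (in particular a domain) of dimension $c(d-1)=\dim(\beta)$.

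To match $V_\beta$ with this connected locus, I would prove two inclusions. For $V_\beta\subseteq V(a_1,\dots,a_c)$: if $\cN(\bG_x)\preceq\beta$, i.e.\ $\cN(\bG_x)$ lies on or above $\beta$, then since $\beta$ has initial slope $1/(c+1)>0$ and $\cN(\bG_x)$ is convex through the origin, the smallest slope of $\cN(\bG_x)$ must satisfy $\geq 1/(c+1)$, so $\bG_x$ has no slope $0$ and is connected.

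For the reverse inclusion, I claim that \emph{any} connected BT-group $H$ over a field of characteristic $p$ with $\dim H=d$ and codimension $c\geq 1$ satisfies $\cN(H)\preceq\beta$. To prove this, write $H$ as isogenous to $\bigoplus_i(G^{\mu_i})^{n_i}$ over an algebraic closure by Dieudonn\'e-Manin, with $0<\mu_1<\cdots$. Writing the smallest slope as $\mu_1=s_1/r_1$ in lowest terms (so $s_1\geq 1$), connectedness of $H$ together with $c\geq 1$ forces $\mu_1<1$, hence $r_1-s_1\geq 1$; the codimension $n_1(r_1-s_1)$ of the first isoclinic piece being at most $c$ then yields $r_1\leq c+s_1$, so $\mu_1=s_1/r_1\geq s_1/(c+s_1)\geq 1/(c+1)$. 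A direct convexity argument combined with the fact that all slopes of a BT-group lie in $[0,1]$ now shows $\cN(H)(t)\geq t/(c+1)=\beta(t)$ on $[0,c+1]$ and $\cN(H)(t)\geq d-(c+d-t)=t-c=\beta(t)$ on $[c+1,c+d]$, proving $\cN(H)\preceq\beta$.

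Since both $V_\beta$ and $V(a_1,\dots,a_c)$ are reduced closed subschemes of $\bS$ with the same underlying set, they coincide as subschemes; in particular $V_\beta$ is defined by the prime ideal $(a_1,\dots,a_c)$ and is irreducible. The main obstacle is the slope estimate $\mu_1\geq 1/(c+1)$, which comes from the codimension bound on the first isoclinic piece; notably, the HW-cyclic hypothesis is used only to identify $V(a_1,\dots,a_c)$ with the connected locus, while the Newton polygon comparison is a general fact about connected BT-groups of codimension $c$.
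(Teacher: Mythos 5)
Your proof is correct and takes a genuinely different route from the paper's. The paper proves the inclusion $V_\beta\subseteq V(a_1,\dots,a_c)$ the same way you do (connectedness of the fibre forces $a_i\in\fp_x$, and $V_\beta$ is reduced), but then it invokes Oort's Theorem \ref{thm-oort} to get $\dim V_\beta=\dim(\beta)=c(d-1)$, which matches the dimension of the integral scheme $V(a_1,\dots,a_c)$; since the latter is irreducible, the inclusion of closed subsets of equal dimension forces equality. You instead prove the reverse inclusion $V(a_1,\dots,a_c)\subseteq V_\beta$ directly, by the elementary slope estimate: any connected BT-group of dimension $d$ and codimension $c\geq 1$ has smallest slope $\geq 1/(c+1)$, and by convexity together with the bound slope $\leq 1$ its Newton polygon therefore lies on or above $\beta$. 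This shows $\beta$ is the minimal (in the $\preceq$ ordering) Newton polygon that a connected fibre can have, identifying $V_\beta$ set-theoretically with the connected locus; since both $V_\beta$ and $V(a_1,\dots,a_c)$ are reduced, they coincide. What your approach buys is independence from the nontrivial dimension formula in Oort's theorem — the lemma becomes essentially self-contained given \ref{prop-HW-versal}(i) and \ref{lemma-HW-V}(ii) — while the paper's argument is shorter granted that \ref{thm-oort} is already in hand for the surrounding discussion.
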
 

\begin{proof} Note first that $\{a_1,\cdots, a_c\}$ is a subset of a system of regular parameters of $R$ by 
\ref{prop-HW-versal}(i). Let  $I$ be the ideal of $R$ defining $V_\beta$.  Let  $x$ be an arbitrary point of $V_\beta$, we denote by $\fp_x$ the prime ideal of $R$ corresponding to $x$.  Since the Newton polygon of the fibre $\bG_x$ lies above
$\beta$, $\bG_x$ is connected. By Lemma \ref{lemma-HW-V}, we have $a_i\in \fp_x$
for $1\leq i\leq c$. Since $V_\beta$ is reduced, we have $a_i\in
I$. Let $\mathfrak{P}=(a_1,\cdots, a_c)$, and $V(\mathfrak{P})$ the closed subscheme of
$\bS$ defined by $\mathfrak{P}$. Then $V(\mathfrak{P})$ is an integral scheme of
dimension $c(d-1)$ and $V_\beta\subset V(\mathfrak{P})$. Since Theorem
\ref{thm-oort} implies that $\dim V_\beta=c(d-1)$,  we have necessarily $V_\beta=V(\mathfrak{P})$. 
\end{proof}

We keep the assumptions above. Let $(t_{i,j})_{1\leq i\leq c,
  1\leq j \leq d}$ be  a regular system of
parameters of $R$ such
that $t_{i,d}=a_i$ for all $1\leq i \leq c$. Let $x$ be the generic
point of the Newton strata $V_\beta$,  $k'=\kappa(x)$, and
$R'=\widehat{\cO}_{\bS,x}$. Since $R$ is noetherian and integral, the
canonical ring homomorphism $R\ra \cO_{\bS,x}\ra R'$ is injective.
The image in $R'$ of an element $a\in
R$ will be denoted also by $a$. By choosing a $k$-section $k'\ra R'$ of the canonical projection
$R'\ra k'$, we get a (non-canonical) isomorphism of $k$-algebras $R'\simeq
k'[[t_{1,d},\cdots, t_{c,d}]]$. Let $k''$ be an algebraic closure of
$k'$, and $R''=k''[[t_{1,d},\cdots,t_{c,d}]]$. Then we have a natural
injective homomorphism of $k$-algebras $R'\ra R''$ mapping $t_{i,d}$
 to $t_{i,d}$ for  $ 1\leq i\leq c$. 

Let $S''=\Spec (R'')$, $\xb$ be its closed point. By the construction of $S''$, we have a morphism
of $k$-schemes 
\begin{equation}f: S''\ra \bS \end{equation}
sending $\xb$ to $x$. We put $\cG=\bG\times_{\bS} S''$. By the choice of the Newton polygon $\beta$, the closed fibre $\cG_{\xb}$ has a  BT-subgroup $\cH_{\xb}$ of multiplicative type of height $d-1$. Since $S''$ is henselian, $\cH_{\xb}$ lifts uniquely to a BT-subgroup $\cH$  of $\cG$. We put $\cG''=\cG/\cH$. It is a connected BT-group over $S''$ of dimension $1$ and height $c+1$. \\

\begin{lemma}\label{rem-Lau} Under the above assumptions, $\cG''$ is the universal deformation in equal characteristic of its special fiber. 
\end{lemma}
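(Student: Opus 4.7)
The plan is to verify the hypotheses of Proposition \ref{prop-HW-versal}(ii) for the BT-group $\cG''$ over $S'' = \Spec(R'')$. The closed fiber $(\cG'')_{\xb}$ is a connected one-dimensional BT-group over $k''$ of height $c+1$, hence isomorphic to $G^{1/(c+1)}$ by \ref{HW-exem}; in particular it is HW-cyclic, and $\cG''$ itself is HW-cyclic by \ref{lemma-cyclic}(i). Moreover $\dim R'' = c$ coincides with the dimension $1 \cdot c$ of the local moduli of $(\cG'')_{\xb}$, so one expects $S''$ itself to be this local moduli.

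The key step is to express the Hasse-Witt map of $\cG''$ in terms of that of $\bG$. From the exact sequence $0 \to \cH \to \cG \to \cG'' \to 0$ of BT-groups over $S''$, Serre duality yields
\[
0 \to (\cG'')^\vee \to \cG^\vee \to \cH^\vee \to 0,
\]
and since $\cH$ is multiplicative of dimension $d-1$, its Serre dual $\cH^\vee$ is \'etale, so $\Lie(\cH^\vee) = 0$. Passing to Lie algebras gives a canonical isomorphism $\Lie((\cG'')^\vee) \simeq \Lie(\cG^\vee) = \Lie(\bG^\vee) \otimes_R R''$, compatible with Hasse-Witt maps by functoriality. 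Consequently, in the basis of $\Lie((\cG'')^\vee)$ obtained by pulling back the distinguished basis of $\Lie(\bG^\vee)$ used to display $\h$, the Hasse-Witt map $\varphi_{\cG''}$ is represented by the same matrix $\h$, with $a_i = t_{i,d}$ now viewed as elements of $R''$.

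By construction $R'' = k''[[t_{1,d}, \dots, t_{c,d}]]$, so the elements $t_{1,d}, \dots, t_{c,d}$ form a regular system of parameters of $R''$. Proposition \ref{prop-HW-versal}(ii) then asserts precisely that $\cG''$ is the universal deformation of its special fiber. The argument is essentially mechanical, the only genuine content being the versality criterion of Proposition \ref{prop-HW-versal}(ii); the sole point requiring care is the identification of Lie algebras under the quotient, which reduces to the vanishing of $\Lie(\cH^\vee)$.
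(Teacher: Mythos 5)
Your proof is correct and follows essentially the same line as the paper: identify $\Lie(\cG''^\vee)$ with $\Lie(\cG^\vee)$ via the vanishing of $\Lie(\cH^\vee)$ (compatibly with Hasse-Witt maps), observe that the resulting Hasse-Witt matrix of $\cG''$ has entries $t_{1,d},\dots,t_{c,d}$ which form a regular system of parameters of $R''$, and invoke Proposition \ref{prop-HW-versal}(ii). Your additional remarks (that $\cG''$ is HW-cyclic by \ref{lemma-cyclic}(i) and the dimension count) are sound but implicit in the paper's argument once the cyclic-form Hasse-Witt matrix is exhibited.
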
  
 This lemma is a particular case of \cite[Lemma 3.1]{lau}. Here, we use \ref{prop-HW-versal}(ii) to give a simpler proof.

\begin{proof} We have an exact sequence of BT-groups over $S''$
\[0\ra \cH\ra \cG\ra \cG''\ra 0,\]
which induces an exact sequence of Lie algebras $0\ra
\Lie(\cG''^\vee)\ra \Lie(\cG^\vee)\ra \Lie(\cH^\vee)\ra 0$ compatible
with Hasse-Witt maps. Since $\cH$ is of multiplicative type, we get
$\Lie(\cH^\vee)=0$ and  an isomorphism of Lie algebras
$\Lie(\cG''^\vee)\simeq \Lie(\cG^\vee)$.  By the choice of the regular system $(t_{i,j})_{1\leq i \leq c, 1\leq j \leq d}$, there is  a basis $(v_1,\cdots, v_c)$ of $\Lie(\cG''^\vee)$ over $\cO_{S''}$ such that $\HW_{\cG''}$ is given by the matrix 
\[\h=\begin{pmatrix}
0 &0 &\cdots &0 &-t_{1,d}\\
1 &0 &\cdots &0 &-t_{2,d}\\
0 &1 &\cdots &0 &-t_{3,d}\\
\vdots &&\ddots &&\vdots\\
0&0&\cdots&1&-t_{c,d}\end{pmatrix}.\]
Now the lemma results from Proposition \ref{prop-HW-versal}(ii).
\end{proof}

\subsection{Proof of Theorem \ref{thm-main}} The one-dimensional case is treated in \ref{thm-one-dim}. If
  $\dim(G)\geq 2$, we apply the preceding discussion to obtain the
morphism $f\colon S''\ra \bS$ and the BT-groups $\cG=\bG\times_{\bS}S''$ and $\cG''$,
which is the quotient  of $\cG$ by the maximal subgroup of $\cG$ of
multiplicative type.   Let $U''$ be the common ordinary locus of $\cG$
and $\cG''$ over
$S''$, and $\xib$ be a geometric point of $U''$. Then $f$ maps $U''$ into the ordinary locus $\bU$ of $\bG$. We denote by 
 $$
\rho_\cG:\pi_1(U'',\xib)\ra \Aut_{\Z_p}(\rT_p(\cG,\xib))
$$ the monodromy representation associated to $\cG$, and the same notation for $\rho_{\cG''}$. By   the functoriality of monodromy, we have $\im(\rho_{\cG})\subset\im(\rho_{\bG})$. On the other hand, the canonical map $\cG\ra \cG''$ induces an isomorphism of Tate modules
$\rT_p(\cG,\etab)\xra{\sim}\rT_p(\cG'',\etab)$ compatible with the action of $\pi_1(U'',\etab)$. Therefore, the group $\im(\rho_{\cG})$ is identified with $\im(\rho_{\cG''})$. Since $\cG''$ is one-dimensional,  we conclude the proof by Lemma \ref{rem-Lau} and Theorem \ref{thm-one-dim}.

\end{document}